%!TEX TS-program = pdflatex
%!TEX encoding = UTF-8 Unicode
%!TeX spellcheck = en-US
%!TEX root = .\tighter.tex

\newif\ifsoda
\sodafalse

\ifsoda

\documentclass[twoside,leqno,twocolumn]{article}
% Comment out the line below if using A4 paper size
%\usepackage[letterpaper]{geometry}
\usepackage[hmargin=1in,vmargin=1in]{geometry}
\usepackage[nocopyright,siamtitle]{jeffeproc}

\usepackage{ltexpprt}
\parskip 1pt plus 1pt minus 1pt

\else

\documentclass[11pt,twoside]{article}
\usepackage[hmargin=1in,vmargin=1in]{geometry}
\usepackage[charter]{mathdesign}

\usepackage{berasans,beramono}
\usepackage[mathcal]{eucal}
\SetMathAlphabet{\mathsf}{bold}{\encodingdefault}{\sfdefault}{b}{\updefault}
\SetMathAlphabet{\mathtt}{bold}{\encodingdefault}{\ttdefault}{b}{\updefault}
\SetMathAlphabet{\mathsf}{normal}{\encodingdefault}{\sfdefault}{\mddefault}{\updefault}
\SetMathAlphabet{\mathtt}{normal}{\encodingdefault}{\ttdefault}{\mddefault}{\updefault}

\fi

\usepackage{jeffe}
\ifsoda
\def\EMPH#1{\textcolor{BrickRed}{{\emph{#1}}}}
\pdfstringdefDisableCommands{} % allow \boldmath in section titles
\fi

\usepackage[utf8]{inputenc}
\usepackage{graphicx,wrapfig}
\DeclareGraphicsExtensions{.pdf,.png,.jpg}
\graphicspath{{./}{./Fig/}}
\usepackage{microtype}
\usepackage{footmisc}
\usepackage[dvipsnames,usenames]{xcolor}
\usepackage[nocompress]{cite}
\usepackage{amsmath,mathtools,stmaryrd}
\usepackage{enumerate}

\usepackage{arydshln}
\dashlinedash 0.75pt
\dashlinegap 1.5pt

% \ifsoda
% \else
% %------------------------------------------
% % line numbers (be nice to the referee)
% %------------------------------------------
% \usepackage[mathlines]{lineno}
% %pagewise
% \linenumbers
% \renewcommand\linenumberfont{\normalfont\tiny\sffamily\color{Gray}}
% \setlength\linenumbersep{2em}
%
% %------------------------------------------
% % Make amsmath environments play well with lineno
% % Thanks to tex.stackexchange
% %------------------------------------------
% \newcommand*\patchAmsMathEnvironmentForLineno[1]{%
% \expandafter\let\csname old#1\expandafter\endcsname\csname #1\endcsname
% \expandafter\let\csname oldend#1\expandafter\endcsname\csname end#1\endcsname
% \renewenvironment{#1}%
% {\linenomath\csname old#1\endcsname}%
% {\csname oldend#1\endcsname\endlinenomath}}%
% \newcommand*\patchBothAmsMathEnvironmentsForLineno[1]{%
% \patchAmsMathEnvironmentForLineno{#1}%
% \patchAmsMathEnvironmentForLineno{#1*}}%
% \AtBeginDocument{%
% \patchBothAmsMathEnvironmentsForLineno{equation}%
% \patchBothAmsMathEnvironmentsForLineno{align}%
% \patchBothAmsMathEnvironmentsForLineno{flalign}%
% \patchBothAmsMathEnvironmentsForLineno{alignat}%
% \patchBothAmsMathEnvironmentsForLineno{gather}%
% \patchBothAmsMathEnvironmentsForLineno{multline}%
% }
% % allow ams display math environments to break across pages
% \allowdisplaybreaks
% \fi

%------------------------------------------
%  Paper-specific definitions
%------------------------------------------
% \def\Wind{\operatorname{\mathit{wind}}}
% \def\Type{\operatorname{\mathit{type}}}
% \def\Defect{\operatorname{\mathit{defect}}}
\DeclareMathOperator{\tort}{tort}
\DeclareMathOperator{\len}{len}

% Create symbol Terra by overlaying 'O' and '+' for the tangles
%\usepackage{listofitems,stackengine}
%\newcommand\mo[2][c]{%
%  \bgroup%
%  \setstackEOL{ }%
%  \setstackgap{L}{0pt}%
%  \Longstack[#1]{#2}%
%  \egroup%
%}
%\def\Tangle{\mo{ $\mathrm{O}$ \normalfont\sffamily I \raisebox{.2ex}{\normalfont--} }} % reads Terra
\def\Tangle{\Theta}
\def\bd{\partial\!}

\ifsoda
\else
\newtheorem{lemma}{Lemma}[section]
\newtheorem{corollary}[lemma]{Corollary}
\newtheorem{theorem}[lemma]{Theorem}
\numberwithin{figure}{section}
\fi

% \usepackage{environ} % http://ctan.org/pkg/environ
% \makeatletter
% \newcommand{\voidenvironment}[1]{%
%   \expandafter\providecommand\csname env@#1@save@env\endcsname{}%
%   \expandafter\providecommand\csname env@#1@process\endcsname{}%
%   \@ifundefined{#1}{}{\RenewEnviron{#1}{}}%
% }
% \makeatother
% \voidenvironment{TODO}

\ifsoda
\def\note#1{}
\fi

%------------------------------------------

\begin{document}

\ifsoda
\else
\begin{titlepage}
\fi
%%%%%  working title  %%%%%
\title{Tightening Curves on Surfaces Monotonically with Applications%
%\thanks{This work was initiated at the Dagstuhl seminar 17072: \emph{Applications of Topology to the Analysis of 1-Dimensional Objects}, Schloß Dagstuhl in Germany, February 2017.}
}

\author{
Hsien-Chih Chang\thanks{
Department of Computer Science, Duke University, USA.  Work by this author was partially supported by NSF under grants CCF-14-08763, CCF-15-13816, CCF-15-46392,
and IIS-14-08846, by an ARO grant W911NF-15-1-0408, and by BSF Grant 2012/229 from the
U.S.-Israel Binational Science Foundation.}
\and
Arnaud de Mesmay\thanks{Université Paris-Est, LIGM, CNRS, ENPC, ESIEE Paris, UPEM, Marne-la-Vallée, France. Work by this author was partially supported by the French ANR projects ANR-18-CE40-0004-01 (FOCAL), ANR-17-CE40-0033 (SoS), ANR-16-CE40-0009-01 (GATO) and ANR-19-CE40-0014 (MINMAX).}
}

%\DRAFT
%\date{Submitted to SODA 2020 — July 9, 2019}
\ifsoda
\date{}
\else
\date{\today}
\fi

\maketitle
\ifsoda
\fancyfoot[R]{\scriptsize{Copyright \textcopyright\ 2020 by SIAM\\
Copyright for this paper is retained by authors.}}
\fi

\begin{abstract}
We prove the first polynomial bound on the number of \emph{monotonic} homotopy moves required to tighten a collection of closed curves on any compact orientable surface, where the number of crossings in the curve is not allowed to increase at any time during the process.
The best known upper bound before was exponential, which can be obtained by combining the algorithm of de Graaf and Schrijver [\textit{J.\ Comb.\ Theory Ser.\ B}, 1997] together with an exponential upper bound on the number of possible surface maps.
To obtain the new upper bound we apply tools from hyperbolic geometry, as well as operations in graph drawing algorithms---the cluster and pipe expansions---to the study of curves on surfaces.

As corollaries, we present two efficient algorithms for curves and graphs on surfaces.
First, we provide a polynomial-time algorithm to convert any given multicurve on a surface into minimal position.  Such an algorithm only existed for single closed curves, and it is known that previous techniques do not generalize to the multicurve case.
Second, we provide a polynomial-time algorithm to reduce any $k$-terminal plane graph (and more generally, surface graph) using degree-1 reductions, series-parallel reductions, and $\Delta Y$-transformations for arbitrary integer $k$.
Previous algorithms only existed in the planar setting when $k \le 4$, and all of them rely on extensive case-by-case analysis based on different values of $k$.
Our algorithm makes use of the connection between electrical transformations and homotopy moves, and thus solves the problem in a unified fashion.

%From a technical standpoint, we apply the cluster and pipe expansion operations---first introduced in the graph drawing settings---to the study of curves on surfaces.

\end{abstract}
\ifsoda
\else
\setcounter{page}{0}
\thispagestyle{empty}
\end{titlepage}

\pagestyle{myheadings}
\markboth{Hsien-Chih Chang and Arnaud de Mesmay}
	{Tightening Curves on Surfaces Monotonically with Applications}
\fi
%%%%%%%%%%%%%%%%%%%%%%%%%%%%%%%%%%%%%%%%%%%

\section{Introduction}
\label{S:intro}

Let $\Sigma$ be an arbitrary compact orientable surface, possibly with boundary.
We consider a collection of closed curves (referred to as a \emph{multicurve}) on $\Sigma$ drawn in general position---with finitely many double crossings, each of which is a transverse intersection, and no tangents or crossings of higher orders.
The goal is to \emph{tighten} the closed curves into another collection of curves with a minimum number of crossings using only continuous deformations known as \emph{homotopy}.
The minimum number of crossings achievable under homotopy is known as the \emph{geometric intersection number}, a fundamental topological parameter associated with any set of closed curves on a surface.
There are many previous works, both in theory and in practice, describing how to compute the geometric intersection number and the tightening process more or less efficiently; we refer to the extensive historical notes of Despré and Lazarus~\cite{dl-cginc-2017} for more background on this classical problem.

Different papers measured the efficiency of the algorithms in different ways; in this paper we are in particular interested in minimizing the total number of combinatorial changes to the closed curves.
Using standard arguments~\cite{a-cas-26,ab-tkc-26,r-k-32},
%\note{simplicial decomposition and such},
every homotopy can be decomposed into a finite sequence of local changes called the \EMPH{homotopy moves}, consisting of the following three basic operations: undo a monogon, remove a bigon, and flip a triangle.
See Figure~\ref{F:homotopy} for an illustration.

\begin{figure*}[h]
  \centering
  \includegraphics[width=14cm]{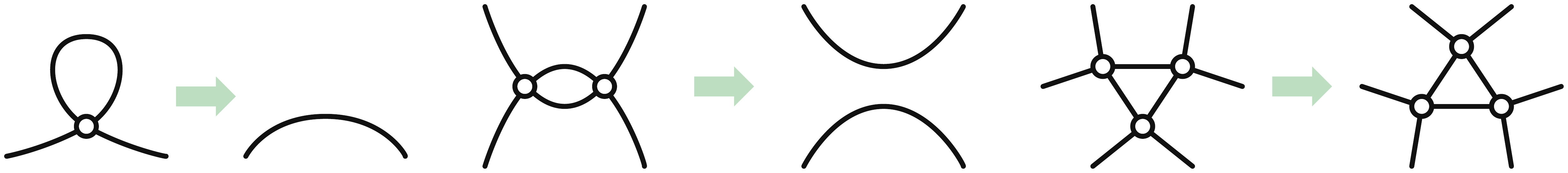}
  \caption{The three homotopy moves $\arc10$, $\arc20$, and $\arc33$.}
\label{F:homotopy}
\end{figure*}

Here our goal is to provide an upper bound on the number of homotopy moves used to tighten a given collection of curves.
%An upper bound of this kind was provided a couple of years ago by Chang et al.~\cite{untangle}, who obtained the first polynomial upper bound to tighten a \textit{single} closed curve on a surface using a polynomial number of homotopy moves.
Furthermore, a desired property of the tightening process is that at no times the number of crossings increases throughout the homotopy.
Intuitively this is a natural property to assume; after all, the goal is to minimize the final number of crossings, and in a sense we want to perform the algorithm greedily and never make the curves more complicated.
In addition, as we will explain later on, monotonicity is not merely a natural assumption to enforce on the tightening process, it is also the key property to draw connection to other sets of local transformations.
Surprisingly, proving that such a monotonic tightening process exists actually requires quite involved arguments, and it was only shown by Hass and Scott~\cite{hs-scs-94} and de Graaf and Schrijver~\cite{gs-mcmcr-97}
% and later on Paterson~\cite{paterson}
that we can safely make such an assumption.
Both algorithms used some discrete variants of the curve-shortening technique of Grayson~\cite{g-sec-89}, Shepard~\cite{s-tscs-91}, and Angenent~\cite{a-pecs2-91}.
The main downside of the approach is that these algorithms are not \emph{efficient} when measured in combinatorial changes.
Indeed, none of the authors of previous algorithms analyze their performance, and with careful reading the best upper bound on the number of homotopy moves performed is merely exponential.
%after plugging in the exponential upper bound on the total number of surface maps on a fixed surface.
% examples of curve shortening with exponential combinatorial changes?
%
Ideally, we would like to have the best of two worlds---a tightening process that is efficient while never creating new crossings.

\subsection{Our results}

In this paper we prove that any collection of closed curves on an orientable surface (possibly with boundary) of genus $g\ge 2$ can be tightened using a polynomial number of monotonic homotopy moves.

\begin{theorem}
\label{Th:monotone}
Any $n$-vertex multicurve $\gamma$ on an orientable surface $\Sigma$ of genus $g$ with $b > 0$ boundary components can be tightened monotonically using $O((g+b)n^3)$ homotopy moves.
When the surface~$\Sigma$ does not have any boundary component (that is, $b=0$) and not a torus, the upper bound becomes $O(n^5\log^3 g/g^2 + gn^3)$.
\end{theorem}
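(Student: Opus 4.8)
The plan is to reduce monotone tightening to a single, well-controlled homotopy between $\gamma$ and its geodesic representative, and to bound the cost of that homotopy with hyperbolic geometry. First I would equip $\Sigma$ with a complete finite-area hyperbolic metric, taken with totally geodesic boundary when $b>0$. This is possible for exactly the surfaces in the statement: the cases excluded here (the sphere, the torus, and---via $b$---the disk and annulus) are precisely the ones with nonnegative Euler characteristic, and all of those reduce to the plane or the annulus, where a polynomial monotone bound is already available. Every component of $\gamma$ is freely homotopic to a unique closed geodesic, so $\gamma$ is homotopic to a geodesic multicurve $\bar\gamma$ realizing the geometric intersection number; it therefore suffices to bound the number of monotone homotopy moves along a homotopy from $\gamma$ to $\bar\gamma$.

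Next I would cut $\Sigma$ into a disk. When $b>0$ this is easy combinatorially: pick a system $A$ of $O(g+b)$ disjoint simple arcs with endpoints on $\partial\Sigma$ whose complement is a disk, homotope $\gamma$ monotonically so that it crosses each arc of $A$ minimally---cheap, since an arc together with the boundary is a contractible obstacle---and then cut, turning $\gamma$ into a family of $O((g+b)n)$ arcs in a disk. Tightening these with the planar bound and reassembling across the cuts should give the claimed $O((g+b)n^3)$. When $b=0$ there is no boundary to anchor such an arc system, so instead I would use a \emph{short cut graph}: Bers-type bounds on pants decompositions together with the collar lemma produce a one-vertex cut graph of total combinatorial length controlled by a function of $g$, and the trade-off between that length and the number of times $\gamma$ must cross it is exactly what produces the $\log^3 g/g^2$ factor. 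The rest of the argument in the two cases is parallel.

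The actual work is tightening a multicurve inside the disk, and this is where the \emph{cluster} and \emph{pipe} expansions from graph drawing come in. Where the target $\bar\gamma$ runs nearly parallel to itself---on a hyperbolic surface, only inside the collars of short geodesics---the strands of $\gamma$ that follow it are trapped in a thin \emph{pipe} carrying a braid-like tangle, which can be combed straight monotonically with a number of moves polynomial in the number of strands. Where crossings of $\bar\gamma$ concentrate, they sit inside a bounded-diameter \emph{cluster} (bounded because the thick part of $\Sigma$ has bounded diameter, or because of the planar combinatorics in the disk case); each cluster is tightened by brute force and then contracted. Processing pipes and clusters innermost-first lets the move counts telescope rather than multiply, and keeps every individual operation---remove a bigon or monogon, comb a pipe, contract a cluster---in the crossing-nonincreasing direction, which is what gives monotonicity.

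The main obstacle, and the reason the de Graaf--Schrijver approach yields only an exponential bound, is controlling the \emph{tortuosity} $\tort(\gamma)$ of $\gamma$ relative to $\bar\gamma$: a priori a single bigon can be nested inside exponentially many others, equivalently a lift of $\gamma$ in the universal cover $\mathbb{H}^2$ can cross a lift of $\bar\gamma$ exponentially often, and it is this quantity that upper-bounds the nesting depth of pipes and hence the total move count. The heart of the proof is to show this crossing count is only polynomial in $n$ times the hyperbolic-geometric factor. I expect this to come from the linear isoperimetric inequality in $\mathbb{H}^2$ together with the exponential divergence of geodesics and the collar lemma: a long, thin strip squeezed between $\gamma$ and $\bar\gamma$ bounds hyperbolic area proportional to its boundary length, which is in turn controlled by $n$ and by $\len(\bar\gamma)$ together with the geometry of $\Sigma$; converting this area bound into a move count through the cluster/pipe decomposition is the technical core.
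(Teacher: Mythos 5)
Your high-level picture—hyperbolic metric, cut the surface, cluster/pipe expansions, quantify distance to the geodesic—matches the paper's, but several load-bearing steps are different from what the paper actually does, and at least one of them would not work as you state it.

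The most serious issue is the proposal to cut $\Sigma$ into a disk along a system of arcs $A$, tighten the resulting family of arcs ``with the planar bound,'' and then ``reassemble across the cuts.'' Tightness in the cut-open disk is not the same as tightness on $\Sigma$: a subarc of $\gamma$ that runs parallel to a cut arc and back forms a bigon on $\Sigma$ but becomes two separate arcs with endpoints on the boundary of the disk, so the planar tightening will not see it and will not remove it. Conversely, a planar tightening of the arc family is free to change the crossing pattern with the cut arcs in ways that do not lift to a homotopy on $\Sigma$. The paper avoids exactly this problem by never cutting $\Sigma$ open: it instead builds a \emph{pipe system} whose skeleton is the dual graph of $A$, tracks $\gamma$ as a family of closed walks in that skeleton, and performs cluster and pipe expansions that refine the skeleton while (Lemmas~\ref{L:cluster-expansion} and~\ref{L:pipe-expansion}) provably preserving the tightening problem on $\Sigma$. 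The termination argument is a potential function $\Phi(C,G)=|E(C)|-|E(G)|$ that drops by at least one per useful pipe expansion, and after $O((g+b)n)$ expansions the skeleton degenerates to a disjoint union of cycles; the curve is then in canonical braid form and the final tightening is done by the de~Graaf--Schrijver flat-braid crossing-minimization step, which your proposal omits. Relatedly, identifying pipes with collars of short geodesics and clusters with the thick part is not how the construction works: the pipes and clusters are purely combinatorial, they start out as the dual of the arc system, and they evolve under the expansions; the thick-thin decomposition plays no role here.

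The $b=0$ reduction is also different in an important way. You propose to obtain a short cut graph from a Bers-type pants decomposition and then trade its length against $\gamma$'s crossings with it. The paper instead \emph{tailors} the hyperbolic metric to $\gamma$: it takes a cut graph derived from $\gamma$ itself (so each edge of $\gamma$ crosses it $O(1)$ times), makes the fundamental polygon regular with vertex angles $2\pi/3$, and reads off the systole lower bound and the $O(\log g)$ diameter directly from hyperbolic trigonometry. The quantity that drives the bound is not a crossing count in the universal cover but the \emph{tortuosity} of Lemma~\ref{L:tortuosity}: at any point of $\gamma$ that is $\delta$ away from its geodesic, the length of the $1/2$-radius subarc through it exceeds the chord by $\Omega(\delta^2)$, and this comes from a Saccheri-quadrilateral computation—because equidistants to a geodesic in $\mathbb{H}^2$ are hypercircles, not geodesics—rather than from the isoperimetric inequality. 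Since each disk-straightening costs $O(n^2)$ moves and removes at least $\e^2$ of length, and the total length is $O(n\log g)$ with $\e=\Theta(g/(n\log g))$ forced by an area comparison with the Gauss--Bonnet area $(4g-4)\pi$, you get $O(n^5\log^3 g/g^2)$. Your area-of-a-thin-strip heuristic is pointing in the right direction, but it is an informal substitute for this precise trigonometric bound, not a proof of it.
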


Note that our theorem applies to surface with any combination of genus and number of boundary components with the one exception of a boundaryless torus.
The result improves over the previous monotonic reduction algorithm by de Graaf and Schrijver~\cite{gs-mcmcr-97} which, combined with the exponential bound on the number of surface maps~\cite{bc-anrms-86} yielded an exponential upper bound.
%On the other hand,
A recent article by Chang~\etal~\cite{untangle} gave a polynomial upper bound on the number of homotopy moves.
However, their algorithm does not guarantee monotonicity, and it only works for a \emph{single} closed curve;
in fact, it is understood that a completely new approach is required to overcome these shortcomings.
Their algorithm relies on the \emph{bigon removal approach} powered by the result of Hass and Scott~\cite{hs-ics-85}, proving the existence of a \emph{singular bigon} or \emph{monogon}---that is, a bigon or monogon that overlaps itself in a not-too-pathological way.
Such bigons and monogons can be removed using polynomially many homotopy moves (see for example~\cite[\S4.2]{untangle} and~\cite[\S6.2.1]{changthesis}).
However, it is known (see for example Figure~0.1 of Hass and Scott~\cite{hs-ics-85} or Figure~1 of Despr\'e and Lazarus~\cite{dl-cginc-2017}) that such singular bigons may not exist when leaving the realm of single closed curves.
Furthermore, every known algorithm that removes singular bigons increases the number of crossings temporarily during the homotopy process, and therefore is not monotonic.

\medskip
The first application of our main theorem is that one can convert any given collection of closed curves into minimal position (that is, with a minimum number of crossings) using homotopy \emph{in polynomial time}.

\begin{theorem}
\label{Th:minimal}
Given a multicurve $\gamma$ on an orientable surface $\Sigma$, we can compute a minimal position of~$\gamma$ on~$\Sigma$ in polynomial time.
\end{theorem}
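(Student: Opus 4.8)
\textbf{Proof proposal for Theorem~\ref{Th:minimal}.}

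The plan is to run, as an algorithm, the monotone tightening process whose existence is asserted by Theorem~\ref{Th:monotone}. Conceptually the skeleton is simple: while the current multicurve is not yet tight, Theorem~\ref{Th:monotone} guarantees a monotone sequence of $O((g+b)n^3)$ homotopy moves reaching a minimal position, and — being nonincreasing and, since $\gamma$ is not tight, nonempty — this sequence contains a crossing-decreasing move $\arc10$ or $\arc20$, which moreover is reached from the current configuration through at most $O((g+b)n^3)$ triangle moves $\arc33$ (the only other move type available in a monotone sequence). Performing that block of moves strictly decreases the number of crossings; since the crossing count is a nonnegative integer bounded by $n$, at most $n$ such rounds — equivalently, the $O((g+b)n^3)$ moves of a single tightening sequence — bring $\gamma$ into minimal position, which we then output (together, if desired, with the list of moves performed).

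What makes this an \emph{algorithm}, rather than merely a restatement of an existence result, is that the sequence of moves promised by Theorem~\ref{Th:monotone} can be produced without search. I would extract from the proof of that theorem the explicit rule it uses to choose the next move: the reduction is driven by the hyperbolic-geometric picture of the lifted multicurve together with the cluster and pipe expansions, and each of these acts on combinatorial data — a fixed ideal triangulation or reduced polygonal schema of $\Sigma$, a neighborhood of the universal cover of polynomial size, the arrangement of the curves and its faces — of size polynomial in $n$ and $g+b$. Each homotopy move, and the update to all of this data, then costs $\mathrm{poly}(n,g+b)$ time; multiplying by the $O((g+b)n^3)$ move bound of Theorem~\ref{Th:monotone} gives a polynomial-time algorithm.

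The main obstacle is exactly the effectiveness just invoked: Theorem~\ref{Th:monotone} is proved as a pure existence statement, so one must verify that every geometric ingredient it uses can be computed and kept of polynomial size — in particular, replacing the hyperbolic metric and its geodesics by a combinatorial surrogate and bounding the portion of the universal cover that the argument consults. (The cluster and pipe expansions, being imported from graph-drawing algorithms, are already effective, so the burden falls on the geometric side.) Two routine points remain. First, the geodesic picture does not see null-homotopic components or families of mutually homotopic components, so one preprocesses $\gamma$ by contracting and deleting null-homotopic components and by isolating and tightening each parallel family directly. Second, the low-complexity surfaces excluded from Theorem~\ref{Th:monotone} — notably the boundaryless torus and the sphere — are handled by ad hoc arguments, where curves are classified by homology and minimal position is easily described, or by applying the single-curve algorithms of Chang~\etal~\cite{untangle} and Despré and Lazarus~\cite{dl-cginc-2017} component by component together with a bounded search.
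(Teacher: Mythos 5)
Your high-level plan coincides with the paper's: run the monotone tightening process of Theorem~\ref{Th:monotone} as an algorithm, verify that each step is effective, and handle the torus (and sphere/disk) by ad hoc means. The paper likewise observes that the cluster/pipe expansion phase of Section~\ref{S:tightening-boundary} is already combinatorial (pseudoline arrangements in place of Euclidean disks, and one can directly compute the straightened tangles rather than enumerating moves), and handles the torus exactly as you propose, via homology on the flat torus.

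However, there is a genuine gap on the geometric side, precisely at the point you flag but do not resolve. You write that one must ``verify that every geometric ingredient \dots can be computed and kept of polynomial size,'' and propose doing so by ``replacing the hyperbolic metric and its geodesics by a combinatorial surrogate.'' The paper does not take that route: it computes directly in the Poincar\'e disk model in a real-RAM model (with hyperbolic trigonometric functions, or approximations thereof), representing the curve as a piecewise-geodesic sequence of circular arcs. The real difficulty, which your sketch does not address, is that each disk-straightening in the main loop of Section~\ref{S:move-to-geodesic} adds new breakpoints to every strand it touches, so a priori the number of circular arcs — and hence the size of the representation — could grow exponentially over the $O(n^3\log^3 g/g^2)$ iterations. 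The paper explicitly calls this a ``potential pitfall'' and proves a dedicated lemma: extending the cut graph $X$ to a family $\Delta$ of $O(g)$ closed geodesics, it shows that the number of intersections of $\gamma$ with $X$ remains $O(gn)$ under any sequence of straightenings, so re-straightening within the fundamental polygon after each disk straightening keeps the curve encoded by $O(gn)$ geodesic segments. Without this (or an equivalent) argument, your appeal to ``polynomial-size combinatorial data'' is unsubstantiated, and the per-step cost and the number of candidate centers of high tortuosity are not controlled. A secondary quibble: applying single-curve algorithms ``component by component'' on the torus would not yield a minimal position of the \emph{multicurve}, since one must also minimize crossings between distinct components; the homology-based construction you also mention is what actually works.
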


As a corollary, we can compute in polynomial time the geometric intersection number of a multicurve, a problem for which the first polynomial-time algorithm was only provided very recently by Despr\'e and Lazarus~\cite{dl-cginc-2017}.
In that paper, Despr\'e and Lazarus also provide a different algorithm to compute minimal position of a \emph{single} closed curve in polynomial time.
%however their approach is only able to compute a minimal position in polynomial time when the input is a \emph{single} closed curve.
Since their techniques also rely on finding and removing singular bigons and monogons, it suffers from the same limitations as explained above and cannot be readily generalized to the more general setting of multicurves.
The existence of an efficient algorithm is not immediate even assuming our main theorem; one has to carefully examine each step of the proof and make sure they can be implemented efficiently.
While Theorem~\ref{Th:monotone} does not apply to the case where $\Sigma$ is a torus, we provide a separate algorithm to handle it.
%\note{Mention that we deal with torus separately.}

\medskip
The second application to the main theorem is the first polynomial-time algorithm that reduces any $k$-terminal plane graph (and more generally, any $k$-terminal surface graph) using \emph{electrical transformations}---a collection of operations on surface graphs consists of degree-1 reductions, series-parallel reductions, and $\Delta Y$-transformations.
It is required that all transformations respect the embedding of the graph, and no terminals can be removed during the reduction.
The goal is to perform a sequence of electrical transformations on the input surface graph and reduce the graph as much as possible---that is, to obtain another surface graph that minimizes the number of edges.

\begin{theorem}
\label{Th:electrical}
Any surface graph with terminals can be reduced as much as possible using electrical transformations in polynomial time.
\end{theorem}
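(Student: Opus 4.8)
The plan is to derive Theorem~\ref{Th:electrical} from Theorem~\ref{Th:minimal} (hence, ultimately, from the monotone tightening bound of Theorem~\ref{Th:monotone}) through the medial-graph correspondence between electrical transformations and homotopy moves. Given a surface graph $G$ cellularly embedded on $\Sigma$, its \emph{medial graph} $M(G)$ is a $4$-regular embedded graph with one vertex for each edge of $G$; we regard $M(G)$ as a multicurve $\gamma_G$ on $\Sigma$ whose self-crossings are exactly the vertices of $M(G)$, so that the number of edges of $G$ equals the number of crossings of $\gamma_G$. Under this correspondence the three electrical transformations match the three homotopy moves exactly: a degree-$1$ reduction or a loop deletion corresponds to a monogon move $\arc10$, a series or parallel reduction corresponds to a bigon move $\arc20$, and a $\Delta Y$- or $Y\Delta$-transformation corresponds to a triangle move $\arc33$. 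This dictionary is the one developed for planar graphs and curves in~\cite{untangle}, and the bulk of the work is to check that it transfers faithfully to the surface-with-terminals setting.

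Terminals are precisely the vertices that electrical transformations may not delete, and we encode this constraint by \emph{puncturing}: let $\Sigma'$ be obtained from $\Sigma$ by removing a small open disk around each terminal, so that $\Sigma'$ has $b + |T|$ boundary components, where $T$ is the terminal set, and view $\gamma_G$ as a multicurve on $\Sigma'$. Then a monogon, bigon, or triangle of $\gamma_G$ that encloses a terminal becomes non-contractible in $\Sigma'$, so the corresponding electrical reduction is unavailable there --- which is exactly the rule that terminals cannot be removed. Consequently the surface graphs reachable from $G$ by electrical transformations correspond to the multicurves freely homotopic to $\gamma_G$ on $\Sigma'$, and the least number of edges of a reduced graph equals the geometric intersection number of $\gamma_G$ on $\Sigma'$, up to a fixed additive term accounting for the trivial (isolated-vertex and free-loop) components of $G$, which are never created or destroyed. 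In particular this quantity is a topological invariant, so ``reduced as much as possible'' is well defined, and it is attained precisely when $\gamma_G$ is in minimal position on $\Sigma'$.

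The crucial point --- and the reason we need \emph{monotonicity} of the tightening rather than mere polynomiality --- is that the dictionary is crossing-monotone: $\arc10$ and $\arc20$ translate into genuinely edge-decreasing reductions and the self-inverse move $\arc33$ into an edge-preserving $\Delta Y$ or $Y\Delta$, so a homotopy move can be realized by $O(1)$ electrical transformations if and only if it does not increase the crossing number. The algorithm therefore: splits $G$ into cellularly embedded pieces and sets aside the trivial components; builds $M(G)$ and the punctured surface $\Sigma'$; runs the polynomial-time procedure behind Theorems~\ref{Th:monotone} and~\ref{Th:minimal} to obtain an explicit \emph{monotone} sequence of homotopy moves bringing $\gamma_G$ to a minimal position on $\Sigma'$; and replays this sequence move by move, translating each move into $O(1)$ electrical transformations in a bounded-size neighborhood of the current graph. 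Monotonicity guarantees the edge count never increases, and minimal position of the medial curve guarantees the final graph attains the minimum number of edges. The number of electrical transformations is $O\big((g + b + |T|)\,m^3\big)$ with $m = |E(G)|$ whenever $b + |T| > 0$, and is bounded by the closed-surface estimate of Theorem~\ref{Th:monotone} otherwise, plus the lower-order overhead of the conversions; in all cases it is polynomial.

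The main obstacle is establishing the faithfulness of the dictionary on $\Sigma'$. One must verify that every monotone homotopy pulls back to a valid electrical sequence, which requires keeping the checkerboard $2$-coloring of the faces of the $4$-regular curve --- the coloring that distinguishes ``vertex'' faces from ``face'' faces, and hence a graph from its dual --- consistent throughout the homotopy, including near the punctures, which should be placed in ``vertex'' faces; and one must confirm that each elementary rewriting is genuinely one of the three permitted electrical operations and never touches a terminal. One must also check that the reduction process cannot get stuck at a non-minimum graph: if the medial curve is not minimal, Theorem~\ref{Th:monotone} supplies a monotone homotopy to minimal position whose first crossing-decreasing or triangle step, after translation, is an applicable electrical transformation, so the reduction can always continue until the medial curve is minimal. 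Finally, a few degenerate cases lie outside Theorem~\ref{Th:monotone} and must be treated directly: a surface graph with no terminals embedded on the torus (apply the separate torus algorithm mentioned after Theorem~\ref{Th:minimal} to its medial curve), and graphs embedded in the sphere or plane with few or no terminals (here $\Sigma'$ has genus $0$ and has positive boundary as soon as there is a terminal or a marked outer face, so Theorem~\ref{Th:monotone} still applies, while the remaining bare-sphere case is immediate since the sphere is not a torus).
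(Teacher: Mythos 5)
Your dictionary between electrical transformations and homotopy moves contains a fundamental error that undermines the whole argument. You write that ``a series or parallel reduction corresponds to a bigon move $\arc20$,'' but in fact it corresponds to $\arc21$: a series or parallel reduction takes two edges (two medial crossings) and \emph{merges them into one edge} (one medial crossing), rather than deleting both. The paper's Figure~\ref{F:medial-elec} lists the electrical moves as $\arc10$, $\arc21$, $\arc33$, which is a genuinely different move set from the homotopy moves $\arc10$, $\arc20$, $\arc33$. This is not a cosmetic discrepancy: the $\arc21$ move does not even preserve the homotopy class of the medial multicurve (it can merge two constituent curves into one), so your claim that the graphs reachable from $G$ by electrical transformations ``correspond to the multicurves freely homotopic to $\gamma_G$ on $\Sigma'$'' is false, and so is the assertion that every crossing-non-increasing homotopy move translates into $O(1)$ electrical transformations --- the $\arc20$ move has no electrical realization.

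Because of this, you cannot ``replay the monotone homotopy sequence move by move.'' The paper's actual route is through Lemma~\ref{L:homotopy-electric} (a statement from~\cite{changthesis}, restated and made algorithmic here): run the monotone homotopy tightening algorithm until the first vertex-decreasing move, do $\arc10$ or replace the planned $\arc20$ with the electrical $\arc21$, and then \emph{restart} the homotopy tightening algorithm on the resulting curve, since the extra crossing left behind invalidates the remainder of the planned sequence. This is why the electrical bound acquires an extra factor of $n$, i.e.\ $X(\gamma) \le n\cdot f(n)$, whereas you quote the homotopy bound $O((g+b+|T|)m^3)$ directly. There is also a non-trivial correctness step that you glide over: showing that a multicurve tight under homotopy is also tight under electrical moves requires the routing-set machinery of~\cite{homoelectric} (Lemma~3.7 there), not just the observation that minimal position is well-defined. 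The proposal needs to be rebuilt around the $\arc21$ move and the restart argument to be correct.
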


Electrical transformations have been widely applied to graph algorithms and network optimizations~\cite{k-etscn-1899,a-wtns-60,l-wtpn-63,f-erpns-85,ghp-igvsp-17}
% r-md-1904, h-rns-67, cs-cr-04
and other fields of science and engineering~\cite{cpv-nastc-95,j-smtra-95,nt-aafts-96,st-afkrm-02,zg-effn-07,pss-alssa-18}.
For a history of electrical transformations and other related work, see~\cite{changthesis}.
%
% Electrical transformations have been used since the end of the 19th century~\cite{k-etscn-1899,r-md-1904} to analyze resistor networks and other electrical circuits, but many other applications have been discovered since.
% Akers~\cite{a-wtns-60} used the same transformations to compute shortest paths and maximum flows (see also Hobbs~\cite{h-rns-67}).
% Lehman~\cite{l-wtpn-63} used them to estimate network reliability; significant amount of work on such application follows \cite{cfp-daptr-96,t-sdtnr-93,sa-refnu-86,gs-dtare-78,t-ndrpg-02} (see also~\cite{r-ncfsd-78,g-seeus-81,rf-tsnrc-77,sa-ccfsd-76,g-cdsp-83,st-eaepn-93}).
% %
% Further applications on solving combinatorial problems using electrical transformations have been found, including
% multicommodity flows~\cite{f-erpns-85};
% counting spanning trees, perfect matchings, and cuts~\cite{cpv-nastc-95,cs-cr-04};
% evaluation of spin models in statistical mechanics~\cite{cpv-nastc-95,j-smtra-95};
% solving generalized Laplacian linear systems~\cite{g-cpssd-96,nt-aafts-96};
% kinematic analysis of robot manipulators~\cite{st-afkrm-02};
% flow estimation from noisy measurements~\cite{zg-effn-07};
% constructing distance preservers~\cite{ghp-igvsp-17};
% and
% studying singularities in quantum field theory~\cite{pss-alssa-18}.
% %
% Lehman \cite{l-ncwt-63} gave a necessary condition on problems to which the electrical transformations apply.
% (See Chapter~7 and Appendix~B of Gitler's thesis \cite{g-dtaa-91} for some discussion.)
%
The relation between electrical transformations and homotopy has been studied implicitly since Tait~\cite{t-k1-1876}, Steinitz~\cite{s-pr-1916,sr-vtp-34}, and Yajima and Kinoshita~\cite{yk-gk-57}, and explicitly by
Goldman and Kauffman~\cite{gk-kten-93} and Nobel and Welsh~\cite{nw-kg-00}, through the lens of \emph{medial construction}.
The \emph{medial graph $G^\times$} of a surface embedded graph $G$ is constructed as follows: create a vertex for each edge in $G$, and create an edge between two vertices if the corresponding two edges share both a vertex and a face in $G$.
From the construction it is immediate that every vertex in $G^\times$ has degree~$4$.
So one can decompose the medial graph into a collection of curves~$\gamma$ by making each vertex of $G^\times$ an intersection point between two constituent curves of $\gamma$.
Quantitative connection between the two sets of operations has been established first in the plane~\cite{tangle}, and later for general surfaces~\cite{changthesis,homoelectric}.
The most important observation we rely on is the following:
Any polynomial upper bound on the number of monotonic homotopy moves required to tighten the medial multicurve $G^\times$ turns into a polynomial upper bound on the number of electrical transformations required to reduce the surface graph~$G$.
Furthermore, the same statement holds when one replaces ``number of moves'' with ``running time''.

There are polynomial-time algorithms that reduce any surface graph with $2$-terminals~\cite{t-drpg-89,fp-dtert-93}, $3$-terminals~\cite{g-dtaa-91,p-snrtt-01,gs-tdrpg-11}, and $4$-terminals~\cite{acgp-frpwg-00,dm-ftpdw-15}.
As for arbitrary value of $k$, previous algorithms assume special positions of the terminals, say when all terminals lie on a single face of the plane graph~\cite{g-dtaa-91,cgv-rep-96}.
All these algorithms, especially the ones for constant number of terminals, rely on heavy case-by-case analysis to characterize what the reduced graphs look like (for example, the work of Archdeacon \etal~\cite{acgp-frpwg-00} and Demasi and Mohar~\cite{dm-ftpdw-15} for the $4$-terminal case in total span more than a hundred pages).
In contrast, our algorithm functions in a unified way by transforming the graph reduction problem into a curve tightening problem on a surface using a set of local operations similar to homotopy moves (see Section~\ref{SS:electrical}), and therefore avoids the above complications.
The electrical reduction algorithm relies crucially on the fact that the curve tightening process is efficient, monotone, and works for multicurves; this is why previous results~\cite{gs-mcmcr-97,untangle,dl-cginc-2017} cannot be used.
An important subtlety is that the aforementioned algorithms to reduce surface graphs with terminals also allow the use of one additional move called the \emph{terminal-leaf contraction}.
%or \emph{FP-assignment}.
\ifsoda This additional move can also be integrated within our framework.
\else We explain in Section~\ref{SS:terminal-leaf} how this additional move can also be integrated within our framework. \fi

Our efficient electrical reduction algorithm is the conclusion of a long sequence of works~\cite{tangle,untangle,changthesis,homoelectric} and our main philosophical contribution---curves and graphs on surfaces can be reduced efficiently when measured in combinatorial changes.
% proved that for any integer $k$, any planar graph with $k$ terminals on a common face can be reduced to a planar graph with $O(k^2)$ vertices.  Gitler's results were significantly extended by Colin de Verdière \etal~\cite{c-rep-92,c-rep1-94,cgv-rep-96} and Curtis \etal~\cite{cmm-fccnb-94,cim-cpgrn-98,cm-ipen-00} to the theory of circular planar networks; see also Kenyon~\cite{k-lpggs-11}.

\subsection{Technical contribution}
The proof of Theorem~\ref{Th:monotone} can be viewed as an amalgamation of the curve shortening algorithm of de Graaf and Schrijver~\cite{gs-mcmcr-97},
the cluster and pipe expansion technique from graph drawings~\cite{cbpp-ecpg-09,weak,ft-cmpd-18},
and the crossing minimization algorithm for flat braids originated from Geck and Pfeiffer in the context of word problem over symmetric groups~\cite{gp-icha-93,gs-mcmcr-97}.
% the singular bigon searching algorithm of Hass and Scott~\cite{hs-ics-85},
% and the bigon removal algorithm by Chang \etal~\cite{untangle}.
%
The first step relies on hyperbolic geometry, which is very relevant to our tightening problem for the following reasons: (1) any (multi)curve on a surface endowed with a hyperbolic metric is homotopic to a unique (multi)geodesic, and (2) a primitive (multi)geodesic is in minimal position.
Our proof follows this approach closely but the key challenge is to control the combinatorics of the curves as well as the length of the process.

Therefore, our first step is to endow $\Sigma$ with a hyperbolic metric, and to move the multicurve $\gamma$ to a neighborhood of the unique collection of geodesics of its homotopy class.
Unlike de Graaf and Schrijver~\cite{gs-mcmcr-97}, we cannot afford to move the multicurve all the way until it reaches a canonical braid-like form.
Instead, we execute the curve shortening algorithm frugally until the curves lie in the $\e$-neighborhoods of its geodesics, where $\e$ is chosen just small enough to ensure that these neighborhoods do not cover the entire surface.
Since we know that the curves can be tightened further while staying in the neighborhoods, at this point it is safe to put a puncture on the uncovered surface and reduce the problem to curves on (orientable) surfaces with boundary.

The second step relies on the new observation that, for a collection of curves $\gamma$ on surface with boundary, one can perform a quadratic number of homotopy moves and put the curves into a \emph{pipe system}---a regular neighborhood of some one-dimensional skeleton graph.
Then multicurve $\gamma$ along with the pipe system are modified gradually by the \emph{expansion operations}, in a way that after polynomial many steps, each constituent curve of $\gamma$ is combinatorially close to a power of some primitive curve, which then can be turned into a canonical form that looks like a flat braid.
After reaching the braid form we use the crossing minimization algorithm~\cite{gp-icha-93,gs-mcmcr-97} to make $\gamma$ tight.

We summarize the above steps in the following two lemmas.
Let $\gamma$ be a collection of curves on a surface $\Sigma$ of negative Euler characteristic, and let $\gamma_*$ be the unique (multi)geodesic of $\gamma$ on $\Sigma$.
We say the multicurve $\gamma$ is \EMPH{$\e$-close} to the geodesic $\gamma_*$ if the lift of~$\gamma$ in the universal cover lies in an $\e$-neighborhood of the lift of $\gamma_*$.

\begin{lemma}
\label{L:follow-strip}
Let $\gamma$ be an $n$-vertex non-contractible multicurve on a surface $\Sigma$ of genus $g \ge 2$ without boundary, and let $\gamma_*$ be the unique geodesic of $\gamma$ on~$\Sigma$.
One can endow $\Sigma$ with a hyperbolic metric so that the multicurve $\gamma$ can be made $\e$-close to $\gamma_*$ using $O(n^5\log^3 g/g^2)$ monotonic homotopy moves for some $\e = \Theta(g/(n\log g))$; furthermore, the $\e$-neighborhood of $\gamma_*$ does not cover the whole surface $\Sigma$.
\end{lemma}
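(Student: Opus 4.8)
The plan is to run the monotonic curve-shortening algorithm of de Graaf and Schrijver~\cite{gs-mcmcr-97}, but to halt it as soon as the curve lies inside the $\e$-neighborhood $N_\e(\gamma_*)$ of its geodesic, and then to re-analyze how many homotopy moves this truncated process performs. As a first step I would fix the hyperbolic structure: choose a metric on $\Sigma$ whose systole \emph{and} diameter are both $\Theta(\log g)$ --- such metrics exist, e.g.\ from the Buser--Sarnak construction of genus-$g$ surfaces with logarithmic systole --- so that the injectivity radius is $\rho = \Theta(\log g)$ at every point while $\operatorname{area}(\Sigma) = 2\pi(2g-2) = \Theta(g)$ by Gauss--Bonnet. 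The lower bound on the injectivity radius is what lets us later realize any ball of radius $O(\e)$ as an isometric disk in the hyperbolic plane and carry out the standard arguments there about geodesic monogons, bigons, and triangles.

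Next comes the geometric claim that $N_\e(\gamma_*)$ misses a point of $\Sigma$. Since $\gamma_*$ is a multigeodesic in minimal position with at most $n$ crossings, one bounds its total length by $\ell(\gamma_*) = O(n\log g)$: roughly speaking $\gamma_*$ is built from $O(n)$ embedded geodesic arcs, each of which --- because $\gamma_*$ is minimal and there are no short geodesics to spiral around --- has length $O(\operatorname{diam}(\Sigma)) = O(\log g)$; this is the estimate that genuinely uses hyperbolic geometry (cf.\ Hass--Scott~\cite{hs-scs-94} and de Graaf--Schrijver~\cite{gs-mcmcr-97}). Then $\operatorname{area}(N_\e(\gamma_*)) \le \ell(\gamma_*)\cdot 2\sinh\e = O(n\e\log g)$ once $\e \le 1$, and choosing $\e = \Theta(g/(n\log g))$ with a sufficiently small constant makes this strictly less than $\operatorname{area}(\Sigma)$, so some point of $\Sigma$ is uncovered; that $\e \le O(1)$ when $g$ exceeds a constant is automatic.

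It remains to count the moves. De Graaf and Schrijver's flow already yields a monotonic homotopy from $\gamma$ to $\gamma_*$, but no polynomial length bound follows directly, so the substance of the proof is to attach a potential that drops by a definite amount at every move. A natural candidate is the following potential $\Phi$: the area, measured inside a single fundamental domain of the universal cover and weighted by winding number, of the region swept between the current curve and the fixed lift of $\gamma_*$. One checks that $\Phi$ never increases along the flow; that, after a preprocessing homotopy that makes the lift of $\gamma$ fellow-travel that of $\gamma_*$, we have $\Phi = O(n)\cdot\operatorname{area}(\Sigma) = O(gn)$ initially; and that once $\gamma$ lies in $N_\e(\gamma_*)$ --- after subdividing $\gamma$ at combinatorial scale $\e$ and using the injectivity-radius bound to place every monogon, bigon, and triangle inside the hyperbolic plane --- each homotopy move sweeps a region of area $\Omega(\e^2)$, hence decreases $\Phi$ by $\Omega(\e^2)$. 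Combining this with a coarser bound on the moves performed before $\gamma$ enters $N_\e(\gamma_*)$ and a careful bookkeeping over the $O(\ell(\gamma_*)/\e)$ cells of the strip and the at most $n$ strands threading each of them gives the claimed bound $O(n^5\log^3 g/g^2)$.

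\textbf{The main obstacle.} The delicate part is precisely this last step. De Graaf--Schrijver's curve shortening is not quantitative as stated, so one must engineer a potential whose per-move decrease is controlled purely by $\e$ and the bounded geometry, and then pin $\e$ down so that all the requirements are mutually consistent: $\e$ must be large enough that each move makes $\Omega(\e^2)$ progress and that the strip $N_\e(\gamma_*)$ decomposes into only $O(\ell(\gamma_*)/\e)$ cells, yet small enough that $N_\e(\gamma_*)$ fails to cover $\Sigma$. Reconciling these demands --- together with establishing the length estimate $\ell(\gamma_*) = O(n\log g)$ on the chosen Buser--Sarnak-type metric --- is where the hyperbolic geometry does the real work; the combinatorial accounting that follows should be routine.
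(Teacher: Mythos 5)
Your high-level plan is reasonable---straighten locally, stop when in a thin strip of the geodesic, puncture outside---but two of the load-bearing steps have genuine gaps, and both are places where the paper does something different for a reason.

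\textbf{The metric must be tailored to $\gamma$, and you cannot bound $\ell(\gamma_*)$ without this.} You propose to fix a Buser--Sarnak-type metric, independent of $\gamma$, and then assert $\ell(\gamma_*)=O(n\log g)$. This cannot work. For a metric chosen independently of the input, the geodesic length $\ell(\gamma_*)$ is a function of the \emph{homotopy class} of $\gamma$ and is unconstrained by the number of crossings $n$: a simple non-contractible curve ($n=0$) can have a geodesic representative of arbitrarily large length. Your intermediate claim that ``$\gamma_*$ is built from $O(n)$ embedded geodesic arcs, each of length $O(\operatorname{diam}\Sigma)$'' has no justification---there is no decomposition of an abstract geodesic into $O(n)$ short pieces. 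The paper sidesteps this by constructing the hyperbolic metric \emph{from a cut graph $X$ derived from $\gamma$ itself}, with the crossing property that each edge of $\gamma$ meets $X$ at most $O(1)$ times. After straightening inside the resulting $O(\log g)$-diameter fundamental polygon (cost $O(n^2)$ moves), the modified curve $\gamma'$ consists of $O(n)$ geodesic arcs each of length $O(\log g)$, whence $\ell(\gamma_*)\le\ell(\gamma')=O(n\log g)$. The length bound is therefore a consequence of choosing the metric adapted to $\gamma$; it is not available if you commit to a metric first.

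\textbf{The per-move area drop of $\Omega(\e^2)$ is not true.} A single $\arc33$ move flips one empty triangle, whose hyperbolic area can be arbitrarily small; nothing in the setup gives you a uniform lower bound per move. The paper instead works at the coarser granularity of \emph{disk straightenings}: it introduces the tortuosity of $\gamma$ at a point, proves via hyperbolic trigonometry (Saccheri quadrilaterals) that if the tortuosity is everywhere below $c\e^2$ then $\gamma$ is $\e$-close to $\gamma_*$ (this is their Lemma~\ref{L:tortuosity}, and it is the quantitative engine of the whole argument---note it is entirely absent from your proposal), and conversely that straightening a unit disk centered at a point of tortuosity $\ge\e^2$ strictly decreases the \emph{length} of $\gamma$ by at least $\e^2$. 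One straightening costs $O(n^2)$ moves via the Steinitz--Ringel argument; the length decreases from $O(n\log g)$ to at least the systole $\Omega(1)$; so $O(n\log g/\e^2)$ straightenings and $O(n^3\log g/\e^2)$ moves suffice. Plugging in $\e=\Theta(g/(n\log g))$ gives $O(n^5\log^3 g/g^2)$. Your swept-area potential $\Phi$ with ``$\Omega(\e^2)$ per move'' would, if it held, give $O(gn/\e^2)=O(n^3\log^2 g/g)$ moves, a different (and smaller) bound---another sign the per-move estimate is not right. Finally, your closing ``careful bookkeeping over the $O(\ell(\gamma_*)/\e)$ cells'' is not actually carried out and, as far as I can tell, does not reproduce the claimed exponent. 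The area-based part of your argument (that $N_\e(\gamma_*)$ does not cover $\Sigma$ once $\e=O(g/(n\log g))$) is fine, but it is the only part that survives.
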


\begin{lemma}
\label{L:tighten-strip}
Let $\gamma$ be an $n$-vertex multicurve with no contractible components on an orientable surface $\Sigma$ of genus $g$ with $b>0$ boundary components.
Then $\gamma$ can be tightened using $O((g+b)n^3)$ monotonic homotopy moves.
\end{lemma}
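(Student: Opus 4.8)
The plan is to carry out the four-stage strategy sketched in the overview above: (i) push $\gamma$ into a \emph{pipe system}, a regular neighborhood of a one-dimensional skeleton of $\Sigma$; (ii) simplify the resulting combinatorial encoding by cluster and pipe expansions until every component of $\gamma$ is combinatorially a power of a primitive curve; (iii) cut $\Sigma$ to present $\gamma$ as a collection of flat braids; and (iv) tighten each braid by the Geck--Pfeiffer word-reduction procedure. In every stage each elementary step will be a monogon, bigon, or triangle move that never increases the crossing count, so the whole process is monotonic.

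\emph{Stage (i).} Since $b>0$, the surface $\Sigma$ deformation retracts onto a $1$-complex $K$ with $O(g+b)$ edges; fix a regular neighborhood $N(K)$ (the pipe system), whose complement in $\Sigma$ is a disjoint union of boundary-parallel annuli. Because $\gamma$ is disjoint from $\partial\Sigma$, every arc of $\gamma$ lying in one of these annuli cuts off a disk, and repeatedly pushing an \emph{innermost} such arc back into $N(K)$ removes it without creating crossings. Inside $N(K)$ we then make $\gamma$ taut: each arc of $\gamma$ running through a pipe is essential in it, and no cluster contains a bigon or monogon. Each cleanup step is a monotonic homotopy move, and a monovariant built from the crossing number of $\gamma$ together with the number of intersections of $\gamma$ with the pipe walls bounds the number of moves by $O(n^2)$.

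\emph{Stages (ii)--(iv).} After Stage~(i), $\gamma$ is encoded by a bounded combinatorial datum: for each pipe a flat braid --- a word in Coxeter generators $s_1,\dots$ recording how its strands cross --- and for each cluster a gluing pattern of bounded size. We apply cluster and pipe expansions to this datum, as in the graph-drawing setting~\cite{cbpp-ecpg-09,weak,ft-cmpd-18}: a cluster carrying several mutually non-interacting groups of strands is split, and a pipe along which a block of strands runs in parallel is subdivided. Each expansion is realized on $\gamma$ by $O(n^2)$ monotonic moves that slide blocks of parallel strands past one another, and a lexicographic potential --- the number of distinct homotopy classes of pipe arcs, then the total crossing number --- strictly decreases with each expansion, so $O((g+b)\,n)$ expansions suffice; this stage therefore costs $O((g+b)\,n^3)$ moves. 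When no expansion applies, each component of $\gamma$ traverses a cyclic pipe-sequence that is a power of a primitive cyclic word, and cutting $\Sigma$ along a suitable spanning family of pipes presents $\gamma$ as a union of flat braids with $O(n)$ strands in total. The minimal number of crossings of a flat braid is the number of inversions of its underlying permutation, and the Geck--Pfeiffer / de Graaf--Schrijver word-reduction algorithm~\cite{gp-icha-93,gs-mcmcr-97} drives each braid word to a reduced word using only the relations $s_i^2 \to 1$, $s_is_{i+1}s_i \leftrightarrow s_{i+1}s_is_{i+1}$, and $s_is_j \leftrightarrow s_js_i$ for $|i-j|>1$ --- each a monotonic homotopy move that never lengthens the word --- in $O(n^2)$ further moves, which is absorbed into the bound above.

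\emph{Main obstacle.} I expect Stage~(ii) to be the crux. The delicate points are to define the cluster and pipe expansions so that each is implementable by polynomially many genuinely monotonic homotopy moves, and to design a potential function that both decreases under every expansion and caps the number of expansions at $O((g+b)\,n)$; since expansions in different pipes interact through the clusters, one must ensure that progress made in one pipe is never undone by an expansion elsewhere, and this global bookkeeping is what I expect to be the hardest part of the argument.
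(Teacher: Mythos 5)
Your proposal follows essentially the same pipeline as the paper's proof: push $\gamma$ into a pipe system with an $O(g+b)$-edge skeleton, apply cluster and pipe expansions driven by a decreasing potential, reduce to a disjoint union of cycles, and finish with the Geck--Pfeiffer / de~Graaf--Schrijver flat-braid reduction. The four stages and the overall $O((g+b)n) \times O(n^2)$ accounting match the paper exactly. The one point where your argument has a genuine gap---and you correctly flag it as the crux---is the potential function.

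The lexicographic potential you propose (``number of distinct homotopy classes of pipe arcs, then total crossing number'') is not clearly monotone under expansion and is not clearly bounded by $O((g+b)n)$: a pipe expansion typically \emph{increases} the number of distinct pipes (the old pipe $uv$ is replaced by several pipes $[uw][vw']$), and the crossing number of $\gamma$ generally stays fixed, since expansions only straighten tangles in regions that are already internally tight. The paper instead uses $\Phi(C,G) = |E(C)| - |E(G)| = \sum_{uv}(|C(uv)|-1)$, i.e.\ the total \emph{excess} usage of pipes by the closed walks of $\gamma$ on the skeleton. This is nonnegative, starts at $O((g+b)n)$ because the initial system of arcs is chosen so that each edge of $\gamma$ crosses it $O(g+b)$ times, and decreases by at least one under each expansion of a pipe that is both \emph{safe} (both ends are bases) and \emph{useful} (one endpoint has degree $\ge 3$). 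Two further points you would still need: (a) some safe useful pipe always exists while the skeleton is not yet a disjoint union of cycles, which the paper gets from the invariant that every cluster is a base of an incident pipe; and (b) your Stage~(i) cost should be $O((g+b)n^2)$ rather than $O(n^2)$, since $\gamma$ can meet the pipe walls $O((g+b)n)$ times and each bigon removal against a wall may cost $O(n)$ moves---this does not change the final bound, but the $O(n^2)$ claim as stated is too optimistic when $g+b$ is superconstant.
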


%\note{A: Explain somewhere how to deal with contractible components and the sphere, disk and annuli cases.}

Theorem~\ref{Th:monotone} follows rather directly from Lemma~\ref{L:follow-strip} and Lemma~\ref{L:tighten-strip}; this is explained in Section~\ref{SS:proof}.
We prove Lemma~\ref{L:follow-strip} in Section~\ref{S:move-to-geodesic}, and Lemma~\ref{L:tighten-strip} in Section~\ref{S:tightening-boundary}.  Applications are discussed in Section~\ref{S:applications}.

%%%%%%%%%%%%%%%%%%%%%%%%%%%%%%%%%%%%%%%%%%%

\section{Preliminaries}

Familiarity with basic concepts regarding the topology and geometry of surfaces will greatly ease the reading. We recommend Stillwell~\cite{s-ctcgt-93} for a general combinatorial introduction to the topic, and the first chapter of Farb and Margalit~\cite{fm-pmcg-11} for the specific topic on curves, surfaces, and hyperbolic geometry.

\subsection{Curves on surfaces}
\label{SS:surfaces}
A \EMPH{surface} $\Sigma$ is a two-dimensional (topological) manifold, possibly with boundaries.
All the surfaces in this article are compact, connected, and orientable. The \EMPH{Euler characteristic} of a surface~$\Sigma$ is $2-2g-b$, where $g$ is the genus and $b$ the number of boundary components in $\Sigma$.
A \EMPH{closed curve} on a surface $\Sigma$ is a continuous map $\gamma\colon S^1 \rightarrow \Sigma$.
A \EMPH{multicurve} is a collection of closed curves, which form its \EMPH{constituent curves}.
An \EMPH{arc} or \EMPH{path} on a surface $\Sigma$ is a continuous map $\gamma\colon [0,1] \rightarrow \Sigma$ with endpoints on the boundary.
In general, we refer to either a collection of closed curve or arcs as \EMPH{curves}.
We only consider generic curves, that is, curves with only a finite number of self-intersections which are transverse double points.
A \EMPH{subpath} of a curve~$\gamma$ is the restriction of $\gamma$ to an interval. A curve is \EMPH{simple} if it is injective.
We will consider sometimes closed curves as graphs embedded on $\Sigma$ by treating their self-intersection points as vertices and the maximal subpaths between these vertices as edges.
A \EMPH{tangle} is a collection of boundary-to-boundary paths $\gamma_1, \ldots, \gamma_s$ in a closed topological disk, which (self-)intersect only pairwise, transversely, and away from the boundary. We call each individual path a \EMPH{strand} of the tangle.

A \EMPH{homotopy} between two closed curves $\gamma_1$ and $\gamma_2$ is a continuous deformation $h\colon S^1 \times [0,1] \rightarrow \Sigma$ such that $h(\cdot,0)=\gamma_1$ and $h(\cdot,1)=\gamma_2$.
This definition extends naturally to arcs and multicurves.
A closed curve is \EMPH{contractible} if it is homotopic to a point.
In this article, we take the convention that if at some point in a homotopy, a multicurve contains a contractible closed curve which has degenerated to a point, we can \textit{remove} this contractible closed curve from the multicurve.%
\footnote{Notice that this differs from de Graaf and Schrijver~\cite{gs-mcmcr-97}, which is why they require one more homotopy move than we do.}
As explained in the introduction, classical arguments show that any homotopy between two closed curves in general position can be decomposed into a sequence of the three homotopy moves pictured in Figure~\ref{F:homotopy}.
A multicurve is \EMPH{tightened}, or \EMPH{tight}, or is a \EMPH{tightening}, or is in \EMPH{minimal position} if it has the smallest possible number of intersections among all the multicurves within its homotopy class.
Sometimes, it will be useful to specify in which surface a homotopy or a tightening lies, for example for a multicurve $\gamma$ contained in a surface $\Sigma$ which is a sub-surface of another surface $\Sigma'$; in this case, we will talk about a homotopy, or a tightening, \EMPH{within} $\Sigma$.

A \EMPH{monogon}%
\footnote{\label{fn:gons}In this work, we only care about \emph{embedded} monogons, bigons or trigons and thus only define those.  We refer to Hass and Scott~\cite{hs-ics-85} for an extensive review of other kinds of monogons and bigons and the corresponding existence results.}
for a curve $\gamma$ is a subpath that begins and ends at the same vertex $x$ and bounds a disk incident to only that vertex.
A \EMPH{bigon}\footref{fn:gons} for a curve~$\gamma$ consists of two simple interior-disjoint subpaths of $\gamma$, sharing two endpoints that together bound a disk on~$\Sigma$ incident to only these two endpoints.
Similarly, a \EMPH{trigon}\footref{fn:gons} for $\gamma$ consists of three simple interior-disjoint subpaths of $\gamma$, forming three pairwise intersections that together bound a disk on $\Sigma$.
A monogon, bigon, or trigon is \EMPH{empty} when the interior of the bounded disk is disjoint from~$\gamma$.
A bigon is \EMPH{minimal} or \EMPH{innermost} if the disk it bounds does not contain a smaller bigon or monogon; a \emph{minimal monogon} is defined similarly.
Note that a minimal monogon does not contain anything in its interior, since any strand crossing it will form an inner monogon or bigon.
Therefore, a minimal monogon can be removed by a single $\arc{1}{0}$ move.

An argument dating back to Steinitz~\cite{s-pr-1916,sr-vtp-34}
\ifsoda
(see Hass and Scott~\cite[Lemma~1.4]{hs-scs-94} or Chang~\cite[Lemma~2.2]{changthesis} for a more recent source)
\else
(see Hass and Scott~\cite[Lemma~1.4]{hs-scs-94})
\fi
shows that a minimal bigon can also be removed using $O(n)$ monotonic moves, where $n$ is the sum of the number of strands and interior vertices in the bigon.
%
%\ifsoda
%His argument also applies to trigons; see full version of the paper for a proof~\cite{fullversion}.
%\else
%His argument also applies to trigons; we reprove it here to be comprehensive.

\ifsoda
\else
We include the proof to be comprehensive.
\fi
\begin{lemma}\label{L:steinitz-empty}
A non-empty minimal bigon $\beta$ must have an empty trigon incident to one of the bounding curves.
Thus one can first remove all the $n$ vertices inside $\beta$ using $n$ $\arc33$ moves, followed by removing all $s$ strands of $\beta$ using $s$ $\arc33$ moves.
\end{lemma}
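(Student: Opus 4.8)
The statement has two parts: (1) a non-empty minimal bigon $\beta$ has an empty trigon touching one of its two bounding subpaths, and (2) this lets us clear $\beta$ using $n$ trigon flips (one per interior vertex) followed by $s$ trigon flips (one per interior strand). I would prove (1) first, since (2) follows by iterating it. The key combinatorial fact is that minimality forbids monogons and bigons inside $\beta$, so every interior face of the planar arrangement inside the closed disk bounded by $\beta$ is either a triangle or has at least four sides; moreover any strand entering $\beta$ must cross $\beta$'s boundary an even number of times and cannot re-enter to form a bigon. The outer boundary of $\beta$ consists of exactly two convex ``corners'' (the two vertices of the bigon), so a discharging / Euler-characteristic count on the arrangement inside the disk will force the existence of a triangular face meeting $\partial\beta$.

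First I would set up the arrangement $A$ obtained by restricting $\gamma$ to the closed disk $D$ bounded by $\beta$: its vertices are the two bigon corners, the $n$ interior crossings of $\gamma$, and the points where interior strands meet $\partial\beta$; its edges are the subarcs of $\gamma$ and the two boundary arcs of $\beta$; its bounded faces subdivide $D$. Minimality of $\beta$ says no bounded face is a monogon or a bigon, and also no interior strand has both endpoints on the same boundary arc of $\beta$ in a way that cuts off a bigon — so every interior strand crosses $\beta$ an even number of times, alternating between the two sides. I would then run the standard Euler-formula argument: assigning each bounded face a charge $\ell(f)-3$ (number of sides minus three, using the combinatorial length of its boundary walk, with the two reflex-free bigon corners contributing the ``saving'') and summing, the total is controlled by $-3$ times the Euler characteristic of $D$ plus a boundary term; since the two corners of $\beta$ are the only convex vertices on $\partial D$, the sum is forced to be negative unless some face has $\ell(f)=3$, i.e.\ a triangle, and a triangle of smallest area (innermost) must be empty and must touch $\partial\beta$ because otherwise it would be an interior trigon that minimality allows but which could be flipped to reduce — actually the cleaner route is: take an innermost trigon incident to $\partial\beta$; minimality prevents it from containing a monogon/bigon, and being innermost prevents it from containing a smaller trigon, hence it is empty.

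For part (2): an empty trigon incident to a bounding subpath of $\beta$ is exactly the configuration on which an $\arc33$ move acts; performing the flip either eliminates one interior vertex of $\beta$ (when the trigon's three vertices are one bigon-corner–side vertex, one interior crossing, and one strand-boundary point — pushing the interior crossing across $\partial\beta$) or eliminates one crossing of an interior strand with $\partial\beta$, and in either case the result is again a minimal bigon on fewer vertices/strands with the same $s$ then decreasing $s$. So I would argue by a two-phase induction: as long as $\beta$ has an interior vertex, Lemma applied to the current minimal bigon yields an empty trigon whose flip removes one interior vertex without increasing $s$; after $n$ such moves $\beta$ is empty but may still have $s$ strands crossing it; then repeat, each empty trigon now necessarily straddling a strand and $\partial\beta$, so each flip reduces $s$ by one, giving $s$ more moves and finally an empty bigon removable by an $\arc20$ (which is not counted, as the lemma only claims the $\arc33$ count to reach emptiness — re-reading, the lemma stops at ``removing all $s$ strands'', leaving an empty bigon). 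Throughout, monotonicity holds because each $\arc33$ move keeps the crossing number fixed.

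\textbf{Main obstacle.} The delicate point is the combinatorial bookkeeping that a flip of an innermost empty trigon incident to $\partial\beta$ genuinely decreases the complexity measure ($n$ first, then $s$) while preserving minimality of the bigon — in particular ruling out that the flip creates a new interior vertex or a new strand crossing, and checking that ``innermost trigon incident to the boundary'' is always realized as one of the two orientations an $\arc33$ move can undo given the fixed region $D$. I would handle this by a careful case analysis of which two of the trigon's three sides lie on $\partial\beta$ versus on $\gamma$: exactly one side on $\partial\beta$ forces the flip to be the ``move one strand across the boundary'' type, and the empty/innermost conditions guarantee nothing obstructs it. The Euler-characteristic count in part (1) is standard but needs the correct treatment of the boundary contribution of the two bigon corners, which is where I expect to spend the most care.
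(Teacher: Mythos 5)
Your high-level plan is sound and matches the paper's in structure (find an empty trigon incident to a bounding curve of $\beta$, flip it, induct), but the part where you actually produce that trigon has a real gap. The Euler/discharging count you sketch — charging $\ell(f)-3$ to each face — does yield \emph{some} triangular face (one can compute $\sum_f(\ell(f)-3)=n-1$ over the $n+2s+1$ bounded faces, so if all faces had $\ge 4$ sides the sum would be too large), but it gives a triangle \emph{somewhere}, with no control on whether it is incident to $\partial\beta$. A triangle formed by three strands in the interior is perfectly consistent with this count and useless for the flip you want. You notice this yourself and pivot to ``actually the cleaner route is: take an innermost trigon incident to $\partial\beta$'' — but that presupposes exactly the thing you were trying to establish, namely that a trigon incident to $\partial\beta$ exists at all, and it also glosses over what ``innermost'' means for not-necessarily-face trigons and why an innermost one is empty. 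Those are the nontrivial points, and they are left unproved.

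The paper sidesteps all of this with a short extremal argument that you should compare against. Because $\beta$ is a \emph{minimal} bigon, each strand is simple, any two strands cross at most once, and each strand runs from one bounding arc to the other (a strand with both endpoints on the same bounding arc would cut off a smaller bigon). Consequently, for \emph{every} interior vertex $x=\alpha\cap\beta$ the two strands $\alpha,\beta$ both hit the bounding arc $\lambda$, so $x$, $a=\alpha\cap\lambda$, $b=\beta\cap\lambda$ bound a trigon $R_x$ with one side on $\lambda$. Existence of a boundary-incident trigon is thus immediate from the tangle structure, no Euler counting required. Now take $x$ making $R_x$ inclusion-minimal: if some strand crossed $\beta$ between $b$ and $x$, the nearest such crossing $y$ would define a strictly smaller $R_y\subset R_x$ (that strand cannot also cross $\alpha$ between $a$ and $x$, else it would cross $\alpha$ twice or form a bigon), contradicting minimality; so $R_x$ is empty. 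When $n=0$ the same logic applies with a bigon corner in place of $x$. This is the piece your proposal is missing. Your bookkeeping for part (2) is essentially correct and matches the paper's: each flip of an empty trigon with one side on $\lambda$ pushes the interior crossing (resp.\ the strand, once $n=0$) across $\lambda$, keeps the crossing number unchanged, and preserves the structure needed to recurse, giving $n+s$ $\arc{3}{3}$ moves total.
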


\begin{proof}
Let $\Tangle$ the tangle formed by $\gamma$ inside the bigon.
Each strand of $\Tangle$ is simple, otherwise it would form a monogon, and each pair of strands intersects at most once, otherwise they would form a bigon. Similarly, each strand intersects two distinct bounding curves. If there are no vertices in this tangle, there is an empty trigon formed by a vertex and one of the strands.

Otherwise, for every vertex $x$ of the tangle obtained by intersecting two strands $\alpha$ and $\beta$, the two strands $\alpha$ and $\beta$ both intersect one of the bounding curves $\lambda$, and thus define a trigon $R_x$ with it.
We denote the other two endpoints by $a$ and $b$, and look at such a vertex $x$ such that the trigon it defines is inclusion-wise minimal and one of its three endpoints is on $\lambda$.
Without loss of generality, $a$ is on $\lambda$, and no strand crosses $\alpha$ between $a$ and $x$.
%\note{Why? Think about a triangle inside a trigon and extend the three segments into strands.}
If a strand crosses $\beta$ between $b$ and $x$, denote by $y$ the crossing point closest to $x$.
This strand does not cross $\alpha$ between $a$ and $x$, thus $R_y$ is a trigon inside $R_x$ and one of its endpoints is on $\lambda$, which contradicts minimality of $R_x$. Thus $R_x$ is empty.

We can recursively remove the vertices of the tangle $\Tangle$ using this empty trigon, using $n$ $\arc33$ moves. Then, using $s$ $\arc33$ moves we can remove all the strands, making the bigon empty.
\end{proof}

This allows us to remove minimal bigons using one last $\arc20$ move.
For convenience, we state the result independently as a lemma, and refer to as the \EMPH{Steinitz bigon removal algorithm}:
\begin{lemma}
\label{L:steinitz}
Any minimal bigon or monogon with $n$ interior vertices and $s$ strands can be removed using $n+s+1$ monotonic homotopy moves.
\end{lemma}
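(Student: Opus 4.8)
The plan is to peel off the interior of the configuration using Lemma~\ref{L:steinitz-empty} and then finish with a single crossing-reducing move. Consider first a minimal bigon $\beta$ with $n$ interior vertices and $s$ strands. If $\beta$ happens to be empty, then $n = s = 0$ and a single $\arc20$ move removes it, matching the claimed bound $n+s+1 = 1$; otherwise I would invoke Lemma~\ref{L:steinitz-empty}, which supplies an empty trigon incident to one of the two bounding subpaths of $\beta$. Flipping that trigon with one $\arc33$ move either deletes an interior vertex of the tangle inside $\beta$ or slides one strand across a bounding subpath and out of $\beta$. I would then argue that the resulting region is again a minimal bigon whose interior tangle has strictly fewer vertices-plus-strands, so Lemma~\ref{L:steinitz-empty} applies again; iterating, after $n$ flips the interior is vertex-free and after $s$ further flips it is also strand-free, i.e.\ the bigon is empty. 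One final $\arc20$ move then removes it, for a total of $n + s + 1$ homotopy moves.

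For a minimal monogon I would first record the observation already made above: a minimal monogon can contain neither an interior vertex nor a strand. Indeed, the two arcs bounding a monogon meet at a single vertex, so any strand entering the bounded disk would, together with a portion of the monogon boundary, enclose a strictly smaller monogon or bigon, contradicting minimality; the same reasoning rules out interior vertices. Hence $n = s = 0$, and a single $\arc10$ move removes the monogon, again matching $n + s + 1 = 1$.

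It then remains only to check that the whole process is monotonic, which is immediate: each $\arc33$ move preserves the number of crossings of the multicurve, while the terminating $\arc20$ (resp.\ $\arc10$) move strictly decreases it, so the crossing count never goes up. I expect the sole nontrivial point to be the inductive bookkeeping inside the bigon in the first paragraph—verifying that flipping the empty trigon guaranteed by Lemma~\ref{L:steinitz-empty} neither creates a new bigon or monogon elsewhere nor destroys the minimality of $\beta$, and that the interior tangle genuinely shrinks at each step. But this is precisely the recursive removal already carried out in the proof of Lemma~\ref{L:steinitz-empty}, so in practice there is essentially nothing to do beyond citing that lemma and appending the final $\arc20$ or $\arc10$ move.
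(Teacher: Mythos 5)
Your proposal is correct and takes essentially the same route as the paper: the paper presents Lemma~\ref{L:steinitz} as an immediate corollary of Lemma~\ref{L:steinitz-empty} (which already supplies the $n$ vertex-clearing followed by $s$ strand-clearing $\arc33$ moves) together with one terminating $\arc20$, and the earlier observation that a minimal monogon is automatically empty so a single $\arc10$ suffices. Your explicit re-description of the trigon-flipping iteration is harmless but, as you yourself note at the end, is exactly the content already established inside Lemma~\ref{L:steinitz-empty}.
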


\subsection{Cut graphs and systems of arcs}
\label{SS:cut-graph}

A \EMPH{cellular embedding} of a graph $G$ on a surface $\Sigma$ is an injective map from $G$ to $\Sigma$ where all the faces (connected components of complement of the embedding) are homeomorphic to open disks.
A \EMPH{tree-cotree decomposition} of a cellularly embedded graph $G$ is a partition $(T,L,C)$ of the edges of~$G$ into three disjoint subsets:
a spanning tree $T$ of $G$, the edges $C$ corresponding to a spanning tree of the dual graph~$G^*$, and exactly $2g$ leftover edges $L \coloneqq E(G) \setminus (T\cup C)$, where $g$ is the genus of the underlying surface~\cite{e-dgteg-03}.
Let $\gamma$ be a multicurve on $\Sigma$; we temporarily view $\gamma$ as a 4-regular graph with some given embedding.  However, the embedding of $\gamma$ is not necessarily cellular; let $G$ be a cellular refinement of $\gamma$ obtained by triangulating every face.
A \EMPH{dual reduced cut graph $X$}~\cite{eh-ocsd-04} (hereafter, just \EMPH{cut graph}) is a cellularly embedded graph
%
% I really want to say “minor of $G^*$” here.
%
obtained from a tree-cotree decomposition $(T,L,C)$ of $G$ as follows:  Start with the subgraph of $G^*$ containing the dual spanning tree $C^*$ and the leftover edges~$L^*$, repeatedly delete degree-one vertices, and finally perform series reductions on all vertices with degree two.

The cut graph $X$ inherits a cellular embedding into $\Sigma$ from the embedding of $G^*$; by construction, this embedding has exactly one face.  Because every vertex of $X$ has degree $3$, Euler's formula implies that $X$ has exactly $4g-2$ vertices and $6g-3$ edges.
%To be consistent with the terminology in Section~\ref{SS:remove-bigon-boundary},
We call the edges of~$X$ \EMPH{arcs}.  Cutting the surface $\Sigma$ along $X$ yields a polygon with $12g-6$ sides, which we call the \EMPH{fundamental polygon} of $X$.
The cut graph induces a regular tiling $\hat{X}$ of the universal cover~$\hat\Sigma$ of~$\Sigma$; we refer to each lift of the fundamental polygon of $X$ as a \EMPH{tile}.
By construction, the cut graph~$X$ satisfies the following \EMPH{crossing property}: \emph{Each edge of the curve~$\gamma$ crosses~$X$ at most once}.
%
%We emphasize that this crossing property might no longer hold when we start moving the curve $\gamma$.
%
%Compared with the system of arcs we used in Section~\ref{S:upper-with-boundary} which satisfies a weaker crossing property (that each edge of $\gamma$ crosses \emph{each} arc at most $O(1)$ times), the cut graph gives us an improved upper bound on the number of tiles intersecting the bounding paths of an embedded bigon in the universal cover of $\Sigma$.

\medskip
When $\Sigma$ is a surface with boundary, it can be cut into a planar piece using exclusively boundary-to-boundary paths: a \EMPH{system of arcs $\Xi$} is a collection of simple boundary-to-boundary paths that cuts the surface $\Sigma$ open into a single polygon.
Furthermore, for any closed curve $\gamma$ on $\Sigma$, there exists a system of arcs~$\Xi$ satisfying the following \EMPH{crossing property}: \emph{Each arc in $\Xi$ intersects each edge of $\gamma$ at most twice, and only transversely.}
We summarize this in the following lemma, and refer for example to Colin de Verdière and Erickson~\cite[Section~6.1]{ce-tnspc-10} or Erickson and Nayyeri~\cite[Section~3]{en-mcsns-11} for a proof of this, as well as polynomial-time algorithms to compute such $\Xi$.

\begin{lemma}
\label{L:system-of-arcs}
Let $\Sigma$ be an arbitrary genus-$g$ surface $\Sigma$ with $b$ boundary components.
There is a system of arcs $\Xi$ on $\Sigma$ of size $O(g+b)$ in general position relative to multicurve $\gamma$ such that each arc intersects each edge of $\gamma$ at most twice (and therefore every edge intersects~$\Xi$ at most $O(g+b)$ times).
Furthermore, $\Xi$ can be computed in $O(n\log n + (g+b)n)$ time, where $n$ is the number of crossings in~$\gamma$.
%\note{Actually $n$ is the complexity of $\gamma$.}
\end{lemma}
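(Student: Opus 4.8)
The plan is to construct $\Xi$ directly from a cellular refinement of $\gamma$, in close analogy with the dual reduced cut graph $X$ built above for closed surfaces. First I would regard $\gamma$ as a $4$-regular graph embedded on $\Sigma$, with the $n$ self-crossings as vertices and the $2n$ subpaths between consecutive crossings as edges, add the $b$ boundary cycles of $\Sigma$, and triangulate every resulting face by inserting Steiner vertices in the interiors of faces (never subdividing an edge of $\gamma$). A routine argument shows this can be arranged so that the triangulation $G$ has $O(n+g+b)$ vertices, edges, and triangles, every edge of $\gamma$ is still an edge of $G$, and $G$ is cellular.

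Next I would read off a system of arcs from a tree-cotree-type decomposition of $G$. Concretely, form the augmented dual graph obtained from $G^{*}$ by adjoining one vertex $v_{j}$ for each boundary component $B_{j}$, joined to the triangle on the inner side of each boundary edge of $B_{j}$; compute a spanning tree $C$ of this augmented dual by breadth-first search. Taking the leftover (non-$C$) dual edges, dualizing, and then cleaning up---iteratively deleting degree-one vertices and suppressing degree-two vertices by series reductions, neither of which ever increases a crossing count with $\gamma$---produces a collection of pairwise-disjoint simple boundary-to-boundary paths whose complement in $\Sigma$ is a single open disk, that is, a system of arcs $\Xi$. An Euler-characteristic count gives $|\Xi| = 2g+b-1 = O(g+b)$. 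The crossing property is forced by the construction: each arc of $\Xi$ lies on the dual graph and, with a small amount of care in the cleanup, meets any single edge of $G$ at most twice; since every edge of $\gamma$ is an edge of $G$, each arc meets each edge of $\gamma$ at most twice, and hence each edge of $\gamma$ meets $\Xi$ at most $2|\Xi| = O(g+b)$ times in total.

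For the running time, building the cellular triangulation $G$ from the given embedding of $\gamma$ takes $O(n\log n)$ time (this is where the $n\log n$ term enters; it can be replaced by $O((g+b)n)$ if one triangulates faces of large complexity more crudely), the breadth-first search and the series-reduction cleanup run in time linear in the size of $G$, namely $O(n+g+b)$, and finally writing down $\Xi$ together with its $O((g+b)n)$ crossings with $\gamma$ takes $O((g+b)n)$ time; altogether this matches the claimed bound $O(n\log n+(g+b)n)$. For all of these steps, including the precise implementation and the crossing analysis, I would refer to Colin de Verdière and Erickson~\cite{ce-tnspc-10} and Erickson and Nayyeri~\cite{en-mcsns-11}, where essentially this construction is carried out.

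The step I expect to be the genuine obstacle is making the three requirements on $\Xi$ hold simultaneously: that the arcs be simple and pairwise disjoint, that they cut $\Sigma$ into exactly one polygon, and that they satisfy the crossing property with the specified constant. The dual-spanning-tree recipe gives the first two almost for free, but controlling the number of times each arc crosses a given edge of $\gamma$ requires choosing the spanning tree and performing the series reductions so that the arcs stay taut relative to $G$; getting this exactly right---rather than merely within a polynomial factor---is the part that relies on the careful bookkeeping in the references.
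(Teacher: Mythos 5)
The paper does not actually prove this lemma; it is stated as a summary of standard facts with a pointer to Colin de Verdi\`ere--Erickson and Erickson--Nayyeri, so the comparison is really between your sketch and the argument one finds in those sources. Your high-level plan (cellularly refine $\gamma$, work with a tree/cotree-type structure on the refinement, count via Euler characteristic, control crossings edge by edge) is the right one, and the arc count $2g+b-1$ and the running-time accounting are both fine.

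The step that does not go through as written is the claim that ``taking the leftover (non-$C$) dual edges, dualizing, and then cleaning up \dots\ produces a collection of pairwise-disjoint simple boundary-to-boundary paths.'' Two things go wrong. First, the deletion/series-reduction cleanup is exactly the recipe that the paper uses to build a \emph{reduced cut graph}, and a reduced cut graph in general has internal degree-$3$ branch vertices: cutting along a union of disjoint arcs alone does the job only because the surface has boundary, and getting from a cut graph to a \emph{disjoint} family of arcs requires an extra splitting/doubling step near the branch points (or a direct iterative construction, as in the cited references). That doubling is precisely where the bound ``at most twice'' comes from---a primal edge crossed once by the cut graph can be crossed twice by the two parallel copies after splitting---so the factor of $2$ in the statement is a tell-tale sign that this step is essential, and your sketch never performs it. Second, the ``dualizing'' is pointing the wrong way: for $\Xi$ to be in general position with respect to $\gamma$, its arcs must be realized inside the faces of $G$ (i.e.\ along the embedded dual), crossing primal edges transversally; taking the primal edges corresponding to the complement of $C$ would produce a subgraph of $G$ that shares, rather than crosses, edges of $\gamma$. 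Repairing the sketch means either explicitly splitting the reduced cut graph at its branch vertices and verifying the doubled crossings, or following the reference construction directly (shortest arcs built from a tree path, a non-tree edge, and a tree path), where each arc crosses a given edge at most once per tree path and hence at most twice in total.
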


\subsection{Hyperbolic trigonometry}

We assume the readers have some familiarity with hyperbolic geometry.  While we recall most of the properties that we rely on, the hyperbolic intuition is sometimes significantly different from the Euclidean one.
We recommend Traver~\cite{t-thp-14} for a nice introduction to hyperbolic trigonometry.

Any surface of negative Euler characteristic can be endowed with a hyperbolic metric.
The \EMPH{area $A(\Sigma)$} of surface $\Sigma$ endowed by this metric is constrained related to the Euler characteristic by the Gauss-Bonnet formula: $A(\Sigma) = -2 \pi \chi(\Sigma)$.
The key hyperbolic property that we will rely on is that any closed curve is homotopic to a unique geodesic under a given hyperbolic metric.
When applying this property to a multicurve, we will refer to the collection of geodesics it yields as a \EMPH{multigeodesic}, or sometimes when there is no ambiguity, simply as a \EMPH{geodesic}.

The \EMPH{hyperbolic law of cosines} states that for a geodesic triangle with angles $\alpha, \beta$, and $\gamma$ and side lengths $a$, $b$, and $c$ (such that the segment of length $a$ is opposed to the angle $\alpha$, and similarly for the other ones), we have:
\[
\cos \alpha = -\cos \beta \cdot \cos \gamma + \sin \beta \cdot \sin \gamma \cdot \cosh a.
\]

A \EMPH{Saccheri quadrilateral} is a hyperbolic geodesic quadrilateral with two equal sides---called the \emph{legs}---perpendicular to a third side, called the \emph{base};
the fourth side is called the \emph{top}.
We denote the lengths of the legs, base, and top as $a$, $b$ and $c$, respectively.
Any Saccheri quadrilateral satisfies the following property:
\[
\sinh \frac{c}{2} = \cosh a \cdot \sinh \frac{b}{2}.
\]

A \EMPH{Lambert quadrilateral} is a hyperbolic geodesic quadrilateral with three right angles. Denoting by $\alpha$ the fourth angle, and by $a$ and $b$ the lengths of the two sides opposite to it, we have the formula
\[
\cos \alpha = \sinh a \cdot \sinh b.
\]

\subsection{Proving the main theorem}
\label{SS:proof}

We conclude the preliminaries by explaining how to prove Theorem~\ref{Th:monotone} assuming Lemmas~\ref{L:follow-strip} and~\ref{L:tighten-strip}:

\begin{proof}[of Theorem~\ref{Th:monotone}]
A theorem of Hass and Scott~\cite[Theorem~2.7]{hs-ics-85} shows that any non-simple \emph{contractible} closed curve on an orientable surface (or more generally, any closed curve homotopic to a simple curve) has an embedded monogon or bigon.
Therefore, any multicurve in which a closed curve is contractible and non-simple also contains an embedded bigon or monogon.
Note that a simple contractible closed curve crossing other components of the multicurve also forms embedded bigons.
Thus after removing all the embedded bigons and monogons using Lemma~\ref{L:steinitz}---which takes $O(n^2)$ moves for an $n$-vertex multicurve---we can assume that contractible components, if there are any, have been shrunk to points and removed.
Therefore throughout the article, we will assume that there are no contractible components in the multicurves considered.
Since any closed curve on a sphere or a disk is contractible, we directly obtain Theorem~\ref{Th:monotone} with an $O(n^2)$ bound in such cases.

Lemma~\ref{L:follow-strip} allows us to reduce the case of boundaryless surfaces (except the torus) to the case of surfaces with boundary.
Indeed, once a multicurve has been placed in an $\e$-neighborhood of its multigeodesic and the neighborhood does not cover the whole surface $\Sigma$, one can safely add a puncture (say an arbitrarily small boundary) outside this $\e$-neighborhood as it has no impact on the tightening of $\gamma$.
Surfaces with boundary are then dealt with using Lemma~\ref{L:tighten-strip}.
The case when $\Sigma$ is a boundaryless torus is not handled by Lemma~\ref{L:follow-strip} nor by the previous observations, and thus remains untackled in Theorem~\ref{Th:monotone}.
\end{proof}

%%%%%%%%%%%%%%%%%%%%%%%%%%%%%%%%%%%%%%%%%%%

\section{Moving Curves Close to Geodesics}
\label{S:move-to-geodesic}

In this section we prove Lemma~\ref{L:follow-strip}.
Let $\Tangle$ be a tangle whose disk is endowed with a Riemannian (say hyperbolic or Euclidean) metric so that it is strictly convex.
A tangle $\Tangle$ is \EMPH{straightened} if all the strands of $\Tangle$ are shortest paths with respect to the metric.
We emphasize the difference between \emph{straightened} and \emph{tightened}:
Tightening is a combinatorial condition where all strands are intersecting minimally;
straightening is a geometric condition where all strands are shortest paths.
A straightened tangle must be tightened.
A converse statement is provided by
%but \emph{not} vice versa, at least not with respect to an arbitrary metric.
the result of Shepard~\cite{s-tscs-91} and Neumann-Coto~\cite{n-csgs-01}, which says that any multicurve in minimal position on a surface $\Sigma$ must be shortest paths with respect to some metric on $\Sigma$ (but not necessarily hyperbolic~\cite{hs-ccgs-1999}).

We will make use of the following quantitative version of
%Steinitz's bigon removal algorithm \cite{s-pr-1916,sr-vtp-34} and
Ringel's homotopy theorem~\cite{r-tedgo-56,r-ugal-57} (see also~\cite{s-otuok-92,hs-scs-94,gs-mcmcr-97,gf-pa-17}).
% r-ttppp-88, hs-scs-94
Since none of the earlier results proved the quadratic upper bound, we include a proof below to be comprehensive.

\begin{lemma}[Hass and Scott~{\cite[Lemma~1.6]{hs-scs-94}}]
\label{L:steinitz-ringel}
Any $m$-vertex tangle $\Tangle$ can be
%tightened into \emph{any} tangle $\Tangle_*$ in minimal position such that each strand of $\Tangle_*$ shares same endpoints
straightened (with respect to some given metric) monotonically using $O(m^2)$ homotopy moves.
\end{lemma}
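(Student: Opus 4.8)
The plan is to induct on the combinatorial complexity of the tangle, using the Steinitz bigon-removal machinery (Lemma~\ref{L:steinitz-empty} and Lemma~\ref{L:steinitz}) as the workhorse, exactly as in the discrete curve-shortening arguments of Steinitz and of Hass and Scott. Start with the tangle $\Tangle$ drawn in general position inside the strictly convex disk, and consider the straightened target tangle $\Tangle_*$ obtained by replacing each strand by the unique shortest path between its endpoints on the (fixed) boundary. Since the metric is strictly convex, $\Tangle_*$ is itself a legitimate tangle: each straightened strand is simple, and two shortest paths sharing no endpoint cross at most once (two crossings would bound a bigon, and the two shortest subpaths along its sides would give two distinct shortest paths between the same pair of points, contradicting uniqueness). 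Thus the straightened tangle $\Tangle_*$ is \emph{tight}, and in particular its number of crossings is a lower bound on the number of crossings of any tangle homotopic to $\Tangle$ rel endpoints.

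Next I would argue that whenever $\Tangle$ is not already straightened, it contains an \emph{empty} monogon or bigon that can be removed monotonically. The reason is geometric: if some strand is not a shortest path, or two strands cross more than the straightened picture dictates, then comparing $\Tangle$ with $\Tangle_*$ produces a disk bounded by subpaths of $\Tangle$ that is ``not needed'' by the shortest-path configuration — formally, take an innermost such disk, which is then an empty monogon or an innermost (minimal) bigon. Applying Lemma~\ref{L:steinitz} removes it using $1+n'+s' $ monotonic homotopy moves, where $s'$ strands pass through it and $n'$ vertices lie inside; crucially $s'+n' = O(m)$ since the whole tangle has only $m$ vertices and hence $O(m)$ strand-arcs. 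Each such removal strictly decreases the number of crossings of $\Tangle$, and the number of crossings never exceeds the initial $O(m^2)$ (in fact it is monotonically non-increasing, which is exactly the monotonicity we need). Hence at most $O(m^2)$ bigon/monogon removals occur, for a total of $O(m^2)\cdot O(m)$ — this naive count gives $O(m^3)$, so the bookkeeping needs to be sharpened.

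To get the claimed $O(m^2)$ bound rather than $O(m^3)$, I would amortize more carefully against a potential function, following Hass--Scott. The right potential is not the crossing number alone but something like the total ``excess length'' discretized, or more combinatorially: assign to the configuration the pair consisting of the number of crossings and, for each pair of strands, the number of times they cross minus the number of times they cross in $\Tangle_*$. The key observation is that an $\arc33$ move does not change the crossing number, while $\arc20$ and $\arc10$ moves decrease it; and in the recursive emptying of a minimal bigon via Lemma~\ref{L:steinitz-empty}, each of the $n'+s'$ $\arc33$ moves used can be charged to a distinct crossing that is \emph{about to be destroyed}, so that the total number of $\arc33$ moves performed over the whole algorithm is bounded by the total number of crossing-destroying events, which is $O(m^2)$. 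Combined with the $O(m^2)$ bound on $\arc20$ and $\arc10$ moves, this yields $O(m^2)$ homotopy moves overall, all monotonic.

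The main obstacle, I expect, is precisely this amortization: proving that the $\arc33$ moves used to empty successive minimal bigons can be charged globally (not just locally within one bigon) to a near-linear number of ``crossing deaths,'' rather than paying $\Theta(m)$ per bigon removal. One clean way around it is to always work with an \emph{innermost} empty monogon or bigon across the \emph{entire} tangle, so that it contains no interior vertices at all ($n'=0$) and only the strands genuinely forced to pass through it; then Lemma~\ref{L:steinitz-empty}'s empty-trigon argument lets each $\arc33$ move be paired with the removal of one crossing on the bigon's boundary, giving a $1$-to-$1$ charge to crossing deaths and hence the quadratic bound. Verifying that such a globally innermost empty bigon always exists when $\Tangle$ is unstraightened — and that its removal keeps the configuration homotopic to a subtangle of $\Tangle_*$ — is the technical heart of the proof.
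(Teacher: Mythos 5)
The two Steinitz lemmas are the right workhorses, but the central claim on which your argument hinges --- that whenever $\Tangle$ is not yet straightened it contains an empty monogon or bigon --- is false, and without it the induction stalls. After you remove all monogons and bigons formed by pairs of strands (which indeed takes $O(m^2)$ moves: there are at most $m$ such removals since each destroys at least one of the $m$ crossings, and each costs $O(m)$ moves), the tangle is \emph{tight} but generically still not \emph{straight}: strands cross minimally yet need not be shortest paths. At that point there are no further monogons or bigons of $\Tangle$ itself, and the disk you get by ``comparing $\Tangle$ with $\Tangle_*$'' is bounded partly by $\Tangle$ and partly by $\Tangle_*$, so it is not a bigon of $\Tangle$ and Lemma~\ref{L:steinitz} does not apply to it. The observation that unlocks the rest of the proof, and which your proposal never isolates, is to pass from strand--strand bigons to bigons formed by a subpath of a strand together with an arc of the disk boundary $\bd D$. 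An innermost such boundary-bigon always exists once the tangle is tight; emptying it lets you push the chosen strand arbitrarily close to $\bd D$, shrink the disk slightly, and recurse on a tangle with one fewer strand, finishing with a final $O(m)$-move sweep carrying the pushed-out strand onto its geodesic.

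The amortization you sketch also would not close the gap even if the structural issue were fixed. Charging $\arc33$ moves to ``crossing deaths'' fails outright for the straightening phase: there the number of crossings is already minimal and stays constant, every $\arc33$ move preserves the crossing count, and yet $\Theta(m)$ such moves may be spent on a single strand, so there are no deaths to absorb the charge. The paper's accounting is simpler and does not amortize against deaths at all: each step of the strand induction costs $O(m)$ moves, and a nonzero cost is incurred only for strands that cross at least one other strand; there are $O(m)$ of those (charge each to one of its crossings), hence $O(m^2)$ overall. Incidentally, your $O(m^3)$ worry stems from a miscount --- $m$ is the number of \emph{vertices} (crossings) of the tangle, so there are at most $O(m)$ bigon removals in the first phase, not $O(m^2)$ --- but fixing that still leaves the straightening phase unaddressed.
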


\begin{proof}
We denote by $\alpha_1, \ldots, \alpha_k$ the strands of a tangle $\Tangle$ in a disk $D$, and by $\delta_1, \ldots, \delta_k$ the shortest paths between their endpoints.
We will use several times the result of Steinitz's mentioned in Lemma~\ref{L:steinitz} in the preliminaries, that any innermost embedded bigon or monogon can be removed using a linear number of monotonic moves.

As a first step, we apply Lemma~\ref{L:steinitz} iteratively to any innermost bigon or monogon in the tangle~$\Tangle$.
Since removing a bigon or monogon reduces the number of vertices by at least one, this can be done in $O(m^2)$ moves, after which there are no embedded bigons nor monogons in $\Tangle$ anymore.
In particular, every strand $\alpha_i$ is simple and any pair of strands $\alpha_i$ and $\alpha_j$ crosses at most once (at this point, the tangle $\Tangle$ is tightened but not \emph{straightened}).

\begin{figure*}
   \centering
   \includegraphics[width=\textwidth]{./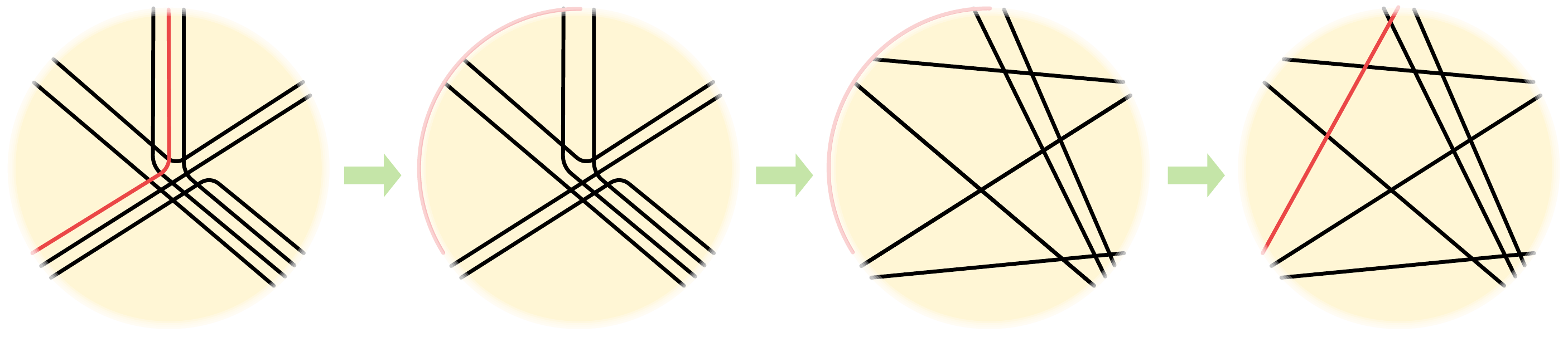}
   \caption{The inductive step to straighten a tangle in a Euclidean disk.}
   \label{F:ringel}
\end{figure*}

The remainder of the proof uses induction on the number of strands in the tangle.
The base case is trivial: a tangle made of a single strand can be straightened without using any move.
Inductive step is pictured in Figure~\ref{F:ringel}.

For an $s$-strand tangle $\Tangle$, we consider the bigons formed by one subpath of a strand and a subpath of the disk boundary $\bd D$.
Since all strands are simple and all bigons between any two strands of $\Tangle$ were removed, we can find a bigon between some $\alpha_i$ and $\bd D$ that is innermost.
Such a bigon is only crossed transversely by other strands of the tangles and its vertices can be removed using $O(m)$ $\arc33$ moves by applying the first step of Lemma~\ref{L:steinitz-empty}.
Once the bigon contains no vertices, we can move $\alpha_i$ towards $\bd D$ until $\alpha_i$ is arbitrarily close to $\bd D$.
One can then consider a slightly smaller disk than $D$ that contains all the strands of the tangle $\Tangle$ except $\alpha_i$.
The new tangle defined by the smaller disk is then straightened recursively.
Then, what remains is to move $\alpha_i$ to the shortest path $\delta_i$.
Since shortest paths cross minimally, they do not form bigons.
And because $\alpha_i$ was chosen so that the bigon formed with a subpath of $\bd D$ was innermost, as all the other strands have been straightened inductively,
the bigon between $\alpha_i$ and $\delta_i$ must be innermost, and can be swept using $O(m)$ moves again by Lemma~\ref{L:steinitz}.
%\note{But some $\delta_j$ and $\alpha_i$ might form bigons.}
Note that if $\alpha_i$ does not cross any other strand of $\Tangle$, this inductive step costs zero moves.
The total number of moves used throughout the recursion is therefore $O(s' \cdot m)$, where $s'$ is the number of strands crossing at least some other strand.
By charging each of these strands to one of their crossing points, we have $s'=O(m)$, and therefore the total bound on number of moves is $O(m^2)$.
\end{proof}

We will use the following corollary of Lemma~\ref{L:steinitz-ringel} in subsequent sections.

\begin{corollary}
\label{C:empty-trigon}
Let $\delta$ be a trigon with $m$ vertices.
%and $s$ strands.
Trigon $\delta$ can be made empty using $O(m^2)$ monotonic homotopy moves in a small neighborhood of $\delta$.
\end{corollary}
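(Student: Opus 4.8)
The plan is to reduce the statement to the straightening lemma (Lemma~\ref{L:steinitz-ringel}) followed by a short combinatorial cleanup. First I would take a small closed-disk neighborhood $N$ of $\delta$ and endow it with a strictly convex Riemannian metric in which the three sides $e_1,e_2,e_3$ of $\delta$ are shortest paths between their endpoints --- for instance by realizing a collar of $\delta$ as a slight convex thickening of a geodesic triangle with the prescribed corner angles in the hyperbolic (or Euclidean) plane. After discarding any closed components of $\gamma$ that lie inside $\delta$ (allowed by our convention), the portion $\gamma\cap N$ is a tangle $\Tangle$ with $O(m)$ vertices, so Lemma~\ref{L:steinitz-ringel} straightens it using $O(m^2)$ monotonic homotopy moves, all performed inside $N$ and hence in a small neighborhood of $\delta$.

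After straightening there are no bigons or monogons and every strand is a shortest path; in particular $e_1,e_2,e_3$ are shortest paths and $\delta$ is a geodesically convex triangle. Consequently any subpath of $\gamma$ meeting the interior of $\delta$ is a single geodesic chord whose two endpoints lie on $\partial\delta$, and these lie on two \emph{distinct} sides (two endpoints on the same side would give a bigon between two geodesics). Thus every such chord, together with the two incident side-segments, bounds a sub-trigon cutting off exactly one corner of $\delta$. To clear these corner chords I would iterate $\arc33$ moves: while $\delta$ is nonempty, take a corner chord $s$ whose cut-off sub-trigon $T_s$ is inclusion-minimal. If $T_s$ is empty, one $\arc33$ move sliding $s$ across the corner of $\delta$ pushes $s$ entirely out of the interior of $\delta$, decreasing the number of chords by one. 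If $T_s$ is nonempty, then (in the spirit of Lemma~\ref{L:steinitz-empty}) some other chord $s'$ crosses into $T_s$, and choosing the crossing $s\cap s'$ and an incident side-segment so that the little triangle they bound is innermost, a single $\arc33$ move slides that crossing out of the interior of $\delta$, strictly decreasing the number of crossings lying inside $\delta$. Each $\arc33$ move preserves the total crossing count, so the process is monotone; since at every step it strictly decreases either the number of chords or the number of crossings inside $\delta$, and both quantities are $O(m)$, it terminates after $O(m)$ additional moves, all in a neighborhood of $\delta$. Altogether $O(m^2)$ monotonic homotopy moves suffice.

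The main obstacle is the cleanup phase: one must verify that whenever $\delta$ is nonempty there is an innermost sub-trigon or innermost little triangle whose interior is genuinely empty, so that the needed $\arc33$ move is actually available, and one must track carefully that this move removes a chord or a crossing from the interior of $\delta$ rather than merely reconfiguring it --- this is exactly where the geometry of two ``linked'' corner chords sharing a corner must be handled, in the same manner as Steinitz's bigon-removal argument underlying Lemmas~\ref{L:steinitz-empty} and~\ref{L:steinitz}. The construction of the convex metric realizing $\delta$ as a geodesic triangle with its given corner angles is routine but should be spelled out; note that it also guarantees that all moves stay within an arbitrarily small neighborhood of $\delta$, as required.
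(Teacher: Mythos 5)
Your approach differs substantially from the paper's, and it is missing the crucial simplifying idea. The paper does not choose a metric in which the sides of $\delta$ are already geodesic; it instead chooses the metric on the tangle disk so that the shortest-path trigon bounded by the three strands of $\delta$ has the \emph{opposite} orientation to $\delta$. Because the straightening homotopy of Lemma~\ref{L:steinitz-ringel} is monotone, it can only reverse the orientation of $\delta$ by eventually performing a $\arc33$ move on $\delta$ itself, and that move is available only when $\delta$ is empty. Hence the straightening must empty $\delta$ at some point, and one simply halts the algorithm just before the flip. With your choice of metric the straightening never flips $\delta$, so you are forced into a second ``cleanup'' phase --- and that phase is precisely where your sketch has a genuine gap. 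You acknowledge this yourself: establishing that there is always an innermost empty sub-trigon or ``little triangle'' on which to perform the next $\arc33$ move, and that each such move strictly decreases the chord or crossing count rather than merely reconfiguring things, amounts to a new Steinitz-style induction for trigons, not a direct invocation of Lemma~\ref{L:steinitz-empty} (which concerns bigons). In particular the linking pattern of two corner chords that share a corner of $\delta$ does not reduce immediately to the bigon situation. The orientation-reversal trick bypasses all of this and is what lets the paper dispatch the statement as a short corollary of Lemma~\ref{L:steinitz-ringel} rather than proving a second removal lemma.
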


\begin{proof}
Consider the tangle $\Tangle$ formed by taking a small neighborhood of the trigon $\delta$.
%\note{First we have to prove that trigon can indeed be flipped with no constraint on the number of moves.}
We endow $\Tangle$ with a metric in such a way that the trigon, formed by replacing the three strands that are the bounding curves of $\delta$ with shortest paths with respect to the metric, has its orientation opposite to that of $\delta$.
To see the existence of such metric, first we endow $\Tangle$ with a metric of constant curvature (say a hyperbolic metric).
Notice that by deforming the distances within a small neighborhood of the disk boundary we can realize arbitrary spacings between endpoints of the strands.
Now by placing the endpoints of the three bounding curves of $\delta$ carefully and connecting each pair of them using shortest paths (with respect to the endowed metric of constant curvature), one can realize either orientation of the trigon.
Applying Lemma~\ref{L:steinitz-ringel} to $\Tangle$ with respect to the constructed metric empties and flips the trigon $\delta$ in $O(m^2)$ moves; we terminate the algorithm just before $\delta$ is flipped.
\end{proof}

\subsection{Constructing the hyperbolic metric}

In this subsection, we explain how to endow $\Sigma$ with a hyperbolic metric that is well-tailored to the purpose of tightening $\gamma$.

\begin{lemma}
\label{L:hyperbolic-metric}
Let $\Sigma$ be a boundaryless surface of genus $g\geq 2$ and $\gamma$ be an $n$-vertex non-contractible multicurve on $\Sigma$.
There is a \EMPH{hyperbolic metric $d_H$} on $\Sigma$ such that
\begin{enumerate}[(1)]\itemsep=0pt
  \item multicurve $\gamma$ can be turned into another multicurve $\gamma'$ of length $O(n\log g)$ using $O(n^2)$ monotonic homotopy moves, and
  \item the length of the shortest non-contractible cycle on $\Sigma$ (known as the \emph{systole}) is at least $1$.
\end{enumerate}
\end{lemma}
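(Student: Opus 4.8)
The plan is to build the metric $d_H$ from the cut graph $X$ of $\gamma$ introduced in Section~\ref{SS:cut-graph}. Recall that $X$ has $4g-2$ vertices, all of degree $3$, and $6g-3$ edges; it has exactly one face; cutting $\Sigma$ along $X$ produces a $(12g-6)$-gon $P$; and each edge of $\gamma$ crosses $X$ at most once. I would realize $P$ as a \emph{regular} hyperbolic $(12g-6)$-gon all of whose interior angles equal $2\pi/3$. Such a polygon exists exactly when $g\ge 2$: a regular hyperbolic $m$-gon realizes every interior angle in $(0,(m-2)\pi/m)$, and $2\pi/3<(12g-8)\pi/(12g-6)$ if and only if $g>1$. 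The $12g-6$ corners of $P$ are identified in triples with the $4g-2$ vertices of $X$, so with angle $2\pi/3$ at every corner the total angle around each vertex is $2\pi$, and the side-pairing induced by $X$ glues $P$ into a smooth hyperbolic surface, which we take to be $(\Sigma,d_H)$; its angle defect $(12g-8)\pi-(12g-6)\cdot\tfrac{2\pi}{3}=4\pi(g-1)$ matches the Gauss--Bonnet area, confirming consistency. Splitting $P$ into $2(12g-6)$ congruent right triangles and applying the hyperbolic law of cosines, every side of $P$ has the same length $s$ with $\cosh\tfrac s2=\tfrac{2}{\sqrt3}\cos\tfrac{\pi}{12g-6}$, and the circumradius $R$ satisfies $\cosh R=\tfrac1{\sqrt3}\cot\tfrac{\pi}{12g-6}$; hence $s\ge 2\operatorname{arccosh}\bigl(\tfrac{2}{\sqrt3}\cos\tfrac{\pi}{18}\bigr)>1$ for all $g\ge2$, and $\operatorname{diam}P=2R=O(\log g)$.

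For property~(1), cut $\gamma$ along $X$. Since $\gamma$ has no contractible component, no constituent curve lies inside a single tile, so cutting $\gamma$ yields a tangle $\Theta$ inside the disk $P$: its strand endpoints are the crossings of $\gamma$ with $X$, of which there is at most one per edge of $\gamma$ and hence $O(n)$, so $\Theta$ has $O(n)$ strands, and its interior vertices are exactly the $n$ self-crossings of $\gamma$. A homotopy move performed on $\Theta$ inside $P$ is a homotopy move on $\gamma$ in $\Sigma$, and since every self-crossing of $\gamma$ lies in the interior of a tile the self-crossings of $\gamma$ and of $\Theta$ correspond bijectively, so monotonicity in $P$ is monotonicity in $\Sigma$. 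Now straighten $\Theta$ with respect to $d_H|_P$ (after an arbitrarily small perturbation of the metric near $\partial P$ to make the disk strictly convex, exactly as in the proof of Corollary~\ref{C:empty-trigon}) using Lemma~\ref{L:steinitz-ringel}; this costs $O(n^2)$ monotonic homotopy moves. Because $P$ is convex, each straightened strand is a geodesic segment of length at most $\operatorname{diam}P=O(\log g)$, so summing over the $O(n)$ strands the resulting multicurve $\gamma'$ has length $O(n\log g)$, as required.

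For property~(2), I would show that every essential closed geodesic of $(\Sigma,d_H)$ has length at least $s$. Lift such a geodesic to the axis $A$ of a deck transformation $\phi$; in the tiling $\hat X$ of the universal cover, $A$ decomposes into a cyclic sequence of chords, each a segment joining two sides of a single tile, and the translation length $\ell(\phi)$ is the total length of one $\phi$-period of these chords. A chord joining two \emph{non-adjacent} sides of a tile has length at least the minimal distance between two non-adjacent sides of the regular $(12g-6)$-gon, which a short hyperbolic-trigonometric computation—drop the perpendicular from the vertex between a side $F$ and an adjacent side $E$ onto the perpendicular bisector of $F$, and use that the distance to a geodesic is convex along geodesics—shows equals $s$. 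A chord joining two \emph{adjacent} sides cuts off a corner at a vertex; here the hyperbolic law of cosines shows that any two consecutive chords of $A$ have lengths summing to at least $s$ (if both cut off a corner they necessarily cut the two corners at the opposite ends of the side they share, whence the bound, because a geodesic cannot cut three corners incident to a common vertex consecutively—only three tiles meet at each vertex). Since $X$ has no loop edge, no two sides of $P$ that are glued to each other share a corner, so the single chord in a period of combinatorial length one is always of the non-adjacent type; together these facts force $\ell(\phi)\ge s>1$, so the systole is at least $1$.

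The main obstacle I anticipate is making the systole argument airtight: besides the (routine but fiddly) bookkeeping of the chord decomposition, one must ensure that the chosen cut graph $X$ has no loop edge—otherwise the two copies of such an edge would be adjacent sides of $P$, and gluing them would create a cone point of angle $5\pi/3$ instead of a smooth hyperbolic metric. So one needs to argue that a loop-free cut graph with the crossing property can always be produced (or to modify the fundamental polygon slightly to absorb loop edges); everything else is a direct application of Lemma~\ref{L:steinitz-ringel} together with standard hyperbolic trigonometry.
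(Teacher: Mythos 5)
Your proof follows essentially the same route as the paper's: build the hyperbolic metric from the cut graph $X$ of $\gamma$ by realizing the fundamental $(12g-6)$-gon with all sides equal and all corner angles $2\pi/3$, straighten the tangle inside the polygon via Lemma~\ref{L:steinitz-ringel} to get property~(1), and lower-bound the systole by a chord-decomposition of a lifted geodesic to get property~(2). Your hyperbolic-trigonometry computations are in fact correct where the paper's footnote is not: the side length satisfies $\cosh(s/2)=\cos\!\bigl(\tfrac{\pi}{12g-6}\bigr)/\sin(\pi/3)=\tfrac{2}{\sqrt3}\cos\!\bigl(\tfrac{\pi}{12g-6}\bigr)$, as you write, not $\sin(\pi/3)\cdot\cos(\cdot)$ as the paper states (the latter would give $\cosh(s/2)<1$, which is impossible); the paper's prose claim that $s>1$ and $\operatorname{diam}P=O(\log g)$ is nonetheless correct, by your formulas.

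The one place you flagged as ``the main obstacle''---the possibility that $X$ has a loop edge, which you worried would produce a cone point or break the systole bound---is actually not an obstacle at all, and for a reason stronger than the one you were trying to find. A reduced cut graph is cubic and has a single face. But a loop $e$ at a degree-$3$ vertex $v$ can \emph{never} appear in such a graph: $v$ has three half-edges, two of which are the two ends of $e$; tracing the face that turns around the corner of $v$ lying between these two loop ends, you find that one of the two oriented traversals of $e$ is its own successor in the face walk, i.e.\ $e$ bounds a monogon face. This contradicts the single-face property, independently of the rotation system. So the cut graph is automatically loop-free, your ``non-adjacent type'' claim for period-one chords holds, and no modification of the fundamental polygon is needed. (As a separate remark, even if a loop could occur it would not create a cone point, since every degree-$3$ vertex is incident to exactly three corners of angle $2\pi/3$ summing to $2\pi$; but the question is moot.) Other than that unnecessary worry, your argument matches the paper's proof of this lemma.
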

%Now we construct a hyperbolic metric on surface $\Sigma$ such that
%
%\begin{enumerate}[(1)]\itemsep=0pt%\cramped
%\item the length of the curve $\gamma$ (after some modifications) is at most $O(n\log g)$, and %($O(ng\log g)$ using system of loops
%\item the length of the shortest non-contractible cycle (known as the \emph{systole}) is at least $1$.
%\end{enumerate}

\begin{proof}
The construction is similar to the argument in Dehn's seminal result \cite{d-uudg-11} that the graph distance on a regular tiling of the universal cover $\hat\Sigma$ approximates the hyperbolic metric on $\hat\Sigma$.
Construct a cut graph $X$ from the curve $\gamma$ such that every edge of $\gamma$ crosses $X$ at most $O(1)$ times, as described in Section~\ref{SS:cut-graph}.
Lift the cut graph $X$ to the universal cover endowed with the unique hyperbolic metric, such that the edges of $X$ are geodesic segments of equal length and each corner has angle $1/3$ circles; this implies, using the hyperbolic law of cosines, that each side of the fundamental polygon has length at least~$1$.%
\footnote{To be accurate, the side length is equal to %$2\cosh^{-1}(2/\sqrt{3}) - \pi^2/72g^2 + \Theta(g^{-3})$,
$2\cosh^{-1}\Paren{ \sin(2\pi/6) \cdot \cos(2\pi / (24g-12)) }$ which is bigger than~$1$ for all $g \ge 2$.}
Note that the diameter of the fundamental polygon is $O(\log g)$, which also follows from the hyperbolic law of cosines. One can project the metric back to the original surface; denote the hyperbolic metric constructed as~{$d_H$}.

%Now by the result of Dehn \cite{d-uudg-11} the counting metric approximates the hyperbolic metric on $\hat\Sigma$ \note{In what sense? In particular how to we prove the systole is long?  quasi-isometries?}.
%\note{Careful, we need to take the dual and convert word metric to crossing metric.}

To prove that the hyperbolic metric $d_H$ defined on surface $\Sigma$ satisfies item~(1), consider the modified curve~$\gamma'$ where all strands within the open disk $\Sigma\setminus X$ are straightened using Lemma~\ref{L:steinitz-ringel}.
As per lemma, $\gamma'$ can be obtained from $\gamma$ using $O(n^2)$ moves.
%This can be done in $O(n^2)$ monotonic moves by Steinitz's bigon removal algorithm \cite{s-pr-1916,sr-vtp-34}.  Then we use Lemma~\ref{L:ringel} to move all the strands within the open disk to geodesics with respect to $d_H$.
Note that any geodesic path not intersecting $X$ has length at most the diameter of the fundamental polygon with respect to $d_H$, which is $O(\log g)$. %\note{proof? Arnaud: I added a quick justification above.}
%By Lemma~\ref{L:singular-bigon-length}
This directly implies that the length of $\gamma'$ is at most $O(n\log g)$, thus the hyperbolic metric $d_H$ satisfies item~(1).

As for item~(2), consider any non-contractible cycle $\sigma$ on surface $\Sigma$; without loss of generality assume $\sigma$ to be a geodesic.
If we lift $\sigma$ to the universal cover $\hat\Sigma$ such that the lift $\hat\sigma$ starts and ends on the lift $\hat X$ of the cut graph $X$, because $\sigma$ is non-contractible, the two arcs of $\hat X$ where $\hat\sigma$ starts and ends respectively are two different translates of the same arc in $X$.
Consider the sequence of arcs $a_0,\dots,a_k$ in $\hat X$ intersected by $\hat\sigma$.  Because $\sigma$ is a geodesic and every vertex in $\hat X$ has degree $3$, one has $a_i \neq a_{i+1}$ and no $a_i$ is incident to $a_{i+2}$ for all $i$.
If for some $i$ the two arcs $a_i$ and $a_{i+1}$ are not incident to each other (that is, $a_i$ and $a_{i+1}$ do not share a vertex in $\hat X$), then by hyperbolic trigonometry the length of the subpath of $\hat\sigma$ connecting $a_i$ to $a_{i+1}$ is at least the length of the side of the polygon, which is at least $1$.
Otherwise, if $a_{i}$ is incident to $a_{i+1}$ and $a_{i+1}$ is incident to $a_{i+2}$, as $a_{i}$ is not incident to $a_{i+2}$, by reflecting the subpath of $\hat\sigma$ from $a_{i+1}$ to $a_{i+2}$ to the tile that contains $a_{i}$ and $a_{i+1}$ we again have the length of the subpath from $a_i$ to $a_{i+1}$ of $\hat\sigma$ lower-bounded by the length of $a_{i+1}$.
This proves that $d_H$ satisfies item~(2).
\end{proof}

\subsection{Straightening multicurve using disks}

\paragraph{Tortuosity.}
Let $\gamma$ be a multicurve on $\Sigma$.  Denote \EMPH{$D(x,r)$} the disk centered at point $x$ with radius~$r$ (with respect to the constructed metric $d_H$ in Lemma~\ref{L:hyperbolic-metric}).
%Let {$I_t$} be the maximal sub-interval of $[0,1]$ containing $t$ such that $\gamma(I_t)$ lies in the disk $D(\gamma(t), 1/2)$.
%denote the two endpoints of $\gamma(I_t)$ as $x$ and $y$.
Denote the two endpoints of the maximal subpath of $\gamma$ in $D(\gamma(t),1/2)$ containing $\gamma(t)$ as $x$ and~$y$, and the maximal subpath itself as $\gamma[x,y]$.
The \EMPH{tortuosity}~\cite{gs-mcmcr-97} of the multicurve $\gamma$ at point $t$, denoted as \EMPH{$\tort(\gamma,t)$}, is the difference between the length of the subpath of $\gamma$ lying in $D(\gamma(t),1/2)$ and the geodesic distance between the two endpoints of the subpath:
\[
%\tort(\gamma,t) \coloneqq \len\Paren{\gamma(I_t)} - d_H\!\Paren{ \gamma(I_t(0)), \gamma(I_t(1)) }.
\tort(\gamma,t) \coloneqq \len\Paren{\gamma[x,y]} - d_H(x,y).
\]
In practice, the tortuosity of $\gamma$ at point $t$ lower bounds the improvement one will make after straightening the disk $D(\gamma(t), 1/2)$.
The {tortuosity} of a multicurve $\gamma$ is the supremum of $\tort(\gamma,t)$ where $t$ ranges over $[0,1]$.
The goal of the following lemma is to prove that when the tortuosity of a multicurve is small, then the whole multicurve is $\e$-close to its multigeodesic.
In other words, as long as the multicurve $\gamma$ has points that are at least $\e$ away from the geodesic, we can always find a disk centered at some point of $\gamma$ whose straightening will decrease the length of $\gamma$ by at least fixed amount, depending only on $\e$.

\begin{lemma}
\label{L:tortuosity}
For any $\e > 0$ smaller than the systole of $\Sigma$, if the tortuosity of $\gamma$ is at most $O(\e^2)$, then $\gamma$ is $\e$-close to the multigeodesic~$\gamma_*$.
\end{lemma}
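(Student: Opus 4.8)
The plan is to pass to the universal cover $\hat\Sigma$ of $\Sigma$, which is isometric to the hyperbolic plane, and to show that every lift of every constituent curve of $\gamma$ stays within distance $\e$ of the corresponding lift of $\gamma_*$. So fix a constituent curve $c$ of $\gamma$ and its geodesic $c_*$, pick a lift $\tilde c$ of $c$, and let $L$ be the lift of $c_*$ invariant under the same (primitive) deck transformation $\phi$, so that $L$ is the axis of $\phi$. The function $z \mapsto d_H(z,L)$ is invariant under $\phi$ (which preserves $L$), so its restriction to $\tilde c$ descends to a continuous function on the compact curve $c$ and attains a maximum $R$ at some point $p = \tilde c(0) \in \tilde c$; it suffices to prove $R \le \e$.

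The first step is an elementary \emph{localization of tortuosity}: for any subpath $\gamma[a,b]$ of $\gamma$ of length at most $1/2$ we have $\len(\gamma[a,b]) - d_H(a,b) \le \tort(\gamma)$, and moreover $\gamma[a,b]$ lies in the $(\tort(\gamma)/2)$-neighborhood of the geodesic segment $[a,b]$. The first inequality holds because $\gamma[a,b]$ lies in the radius-$1/2$ disk about its arclength midpoint $m$, hence inside the maximal subpath through $m$ in that disk, and $\len(\gamma[\cdot,\cdot]) - d_H(\cdot,\cdot)$ only grows when a subpath is extended (triangle inequality); the containment holds because for every $q \in \gamma[a,b]$ the Gromov product $(a\,|\,b)_q = \tfrac12(d_H(a,q) + d_H(q,b) - d_H(a,b)) \le \tfrac12(\len(\gamma[a,b]) - d_H(a,b)) \le \tort(\gamma)/2$ dominates $d_H(q,[a,b])$. (Here we use that the systole is at least $1$, by item~(2) of Lemma~\ref{L:hyperbolic-metric}, so that these radius-$1/2$ disks are embedded.) Write $\tau := \tort(\gamma) = O(\e^2)$.

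Now the heart of the argument: a hyperbolic comparison at the top of the bulge. Applying the localization to the subpaths $\gamma[\tilde c(-\rho), \tilde c(\rho)]$ for $0 < \rho \le 1/4$, the geodesic segment $[\tilde c(-\rho), \tilde c(\rho)]$ passes within $\tau/2$ of $p$, hence reaches height at least $R - \tau/2$ from $L$, while by convexity of $d_H(\cdot,L)$ along geodesics it stays at height at most $\max(h(-\rho), h(\rho)) \le R$ (where $h(s) = d_H(\tilde c(s),L)$, and $R$ is the global maximum of $h$); thus $\max(h(-\rho),h(\rho)) \ge R - \tau/2$ for all such $\rho$, so $\tilde c$ stays at height at least $R - \tau/2$ for a total arclength at least $1/8$ within the length-$1/4$ stretch on one side of $p$. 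While it sits there at height $\approx R$, the curve hugs the hypercycle whose geodesic curvature is $\tanh R$, so to avoid ever exceeding height $R$ it must steer towards $L$ and thereby accumulate turning of order $\tanh R$. On the other hand, this turning is spent within a window of length at most $1/2$ containing $p$ near its center, and a centered near-length-minimizing arc cannot turn by more than $O(\sqrt{\tau})$: a turn of total angle $\theta$ whose two sides both have length $\gtrsim 1/4$ inside the window contributes at least a constant times $\theta^2$ to the length excess, so $\theta = O(\sqrt{\tau})$. Combining the two estimates gives $\tanh R = O(\sqrt{\tau}) = O(\e)$ up to an absolute constant, so choosing the implied constant in the hypothesis $\tort(\gamma) \le O(\e^2)$ small enough forces $R \le \e$, as required.

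I expect the main obstacle to be making the comparison of the previous paragraph fully rigorous for a merely generic curve (piecewise geodesic, in the application): one must quantify ``hugging the hypercycle at height $\approx R$ forces turning of order $\tanh R$'' via the Clairaut relation for geodesics in the Fermi coordinates around $L$, taking care of the jump angles at the vertices of $\gamma$ and of edges that may be as long as $O(\log g)$, and quantify ``a centered near-minimizing arc turns only $O(\sqrt\tau)$'' carefully enough to handle turning that is spread out rather than concentrated at one vertex; note that closed sub-loops are harmless here since splicing one out only decreases the length excess. An alternative, more black-box route is to observe that the localization exhibits $\tilde c$ as a $(1, O(\e^2))$-local quasigeodesic at the fixed scale $1/2$, and then to invoke the local-to-global stability of quasigeodesics in a hyperbolic space together with the Morse lemma, with effective constants; the delicate point there is to check that the scale $1/2$ is large enough relative to the hyperbolicity constant of the hyperbolic plane, which is precisely where the bounded geometry of $d_H$ (systole at least $1$) is used.
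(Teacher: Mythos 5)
Your proposal takes a genuinely different route from the paper: where the paper proves the contrapositive by a direct two--Saccheri-quadrilateral computation of $d_H(x,y)$ for the endpoints of the length-$1$ arc centered at the farthest point $p$ (getting $d_H(x,y)=2r - \Theta(\delta^2)$ directly from the trigonometric identities and a Taylor expansion), you propose a qualitative curvature-comparison argument: spending time at height $\approx R$ forces total turning $\gtrsim \tanh R$, while near-minimizing arcs at a fixed scale can only turn $O(\sqrt{\tort})$. Your identification of the key phenomenon---the defect of geodesicity of the hypercircle---is exactly right and even echoes the remark the paper makes after its proof. But as written there is a genuine gap, and it sits precisely where the content of the lemma lies: the ``heart of the argument'' paragraph is a heuristic you yourself flag as needing to be made rigorous, and neither ``hugging the hypercycle forces turning $\Omega(\tanh R)$'' nor ``a centered near-minimizing arc turns only $O(\sqrt{\tau})$'' is actually established. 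The proposed fallback via local-to-global stability of quasigeodesics also runs into the exact difficulty you name: the available scale is $1/2$ while the hyperbolicity constant of $\mathbb{H}^2$ is of the same order, so the stability theorem does not obviously apply with useful constants.

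There is also a concrete error upstream, in the ``localization of tortuosity'' step. The Gromov product does \emph{not} dominate $d_H(q,[a,b])$: already in the Euclidean plane, a point $q$ with $d(a,q)+d(q,b)=d(a,b)+\tau$ lies on an ellipse whose semi-minor axis is $\Theta(\sqrt{\tau\, d(a,b)})$, so the correct bound is $d_H(q,[a,b]) = O(\sqrt{\tau})$, not $\tau/2$ (and for small triangles $\mathbb{H}^2$ is Euclidean to leading order, so hyperbolicity does not rescue the stronger claim). This $\sqrt{\tau}$ versus $\tau/2$ discrepancy propagates into the height estimates, and while I believe the intended conclusion $\tanh R = O(\sqrt{\tau})$ still survives this correction, the bookkeeping has to be redone, and the unproven turning lemma remains. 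The paper's trigonometric route avoids all of this machinery: once $x,y$ are moved to height exactly $\delta$, the distance $d_H(x,y)$ is computed in closed form from two Saccheri quadrilaterals, and the $\Omega(\delta^2)$ deficit drops out of a one-line Taylor expansion. I'd recommend reworking the argument along those more self-contained lines rather than trying to make the curvature heuristic precise.
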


\begin{proof}
We will prove the contrapositive statement using hyperbolic trigonometry.  For the sake of generality we temporarily treat $r$ as a variable; at the end of the calculation one just plugs in $r\coloneqq 1/2$.
Here we list two identities that will be used in our proof.
\begin{enumerate}[(1)]
\item For any real number $x$, $\sinh(2x) = 2\sinh x \cosh x$ and $(\cosh(x))^2 - (\sinh(x))^2 = 1$.

%\item For any nonnegative real number $x$, $\sinh^{-1} x = \ln\Paren{x+\sqrt{x^2+1}} \le x+x^2/2$.

\item Given an arbitrary \emph{Saccheri quadrilateral} with the lengths of the legs, base, and top as $a$, $b$, and $c$ respectively, then
\[
\sinh\frac{c}{2} = \cosh a \cdot \sinh\frac{b}{2}.
\]
%\item $\cosh x = \sum_{k\ge 0} x^{2k}/(2k)! \ge 1 + x^2$.
\end{enumerate}

\begin{figure*}
  \centering
  \def\svgwidth{0.9\textwidth}
  \scriptsize
  \input{./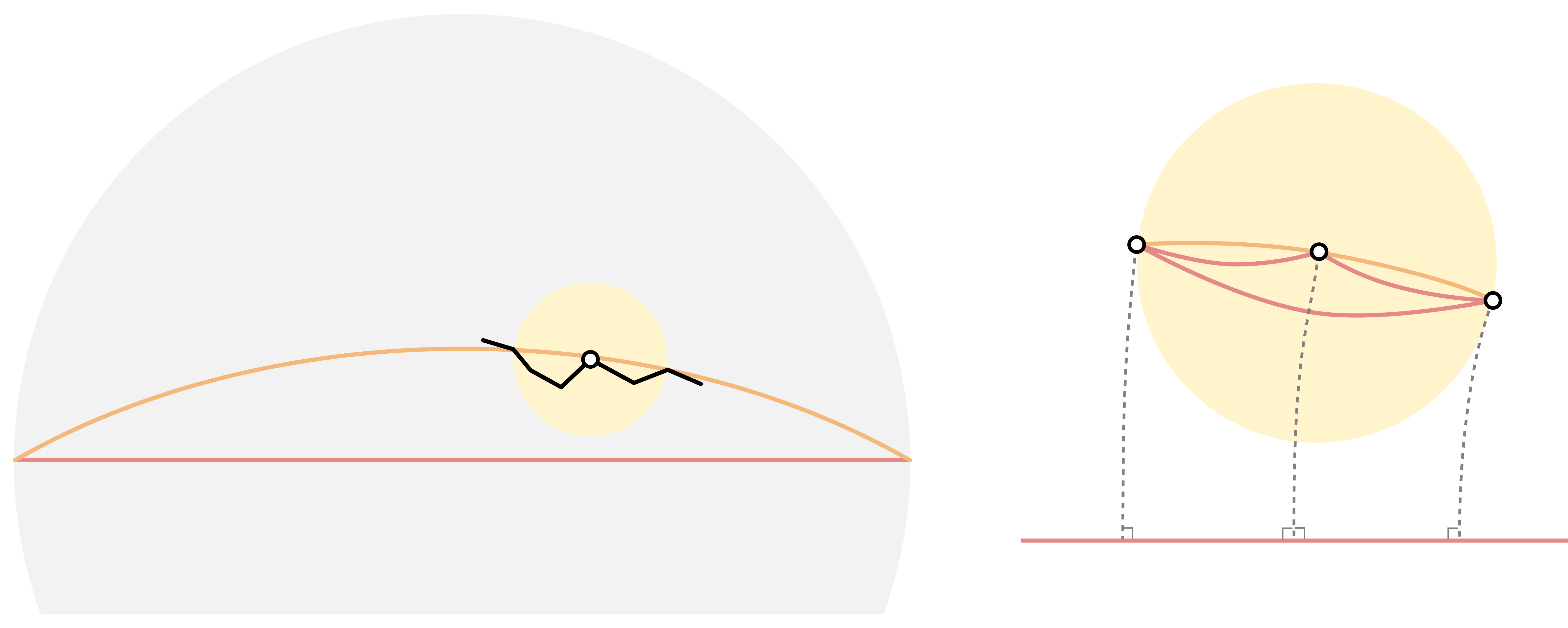_tex}
  \caption{Left: The setup for proof of Lemma~\ref{L:tortuosity}, represented in the Poincaré disk model.
  The orange curve is the set of points at distance exactly $\delta$ from $\hat\gamma_*$.
  This \textit{hypercircle} is not a geodesic, but is always a circular arc in the Poincaré disk.
  Right: Zooming around the disk $D(p,r)$. The tortuosity is minimized when $x$ and $y$ lie on the orange hypercircle. The geodesics between $x$, $p$, and $y$ are in red, and three Saccheri quadrilaterals are formed by red and dotted geodesic arcs.}
  \label{F:hyperbolic}
\end{figure*}

Lift both $\gamma$ and $\gamma_*$ to the universal cover $\hat\Sigma$; denote the resulting families of paths as $\hat\gamma$ and $\hat\gamma_*$ accordingly.
Let $t$ be a point in $[0,1]$ such that $\hat\gamma(t)$ has maximum distance to $\hat\gamma_*$.  Refer to point $\hat\gamma(t)$ as $p$ and the maximum distance as $\delta$; by assumption $\delta$ is at least $\e$.
Our goal is to prove that the tortuosity of $\gamma$ at $t$ is at least $\Omega(\e^2)$.
%Denote the two endpoints of the maximal subpath of $\hat\gamma$ in $D(p,r)$ containing $p$ as $x$ and $y$, and the maximal subpath itself as $\hat\gamma[x,y]$.
One has
\[
\tort(\gamma,t) = \len\Paren{\hat\gamma[x,y]} - d_H(x,y) \ge 2r - d_H(x,y).
\]
Here without loss of generality we will assume that $x$ and $y$ are both at distance exactly $\delta$ to $\hat\gamma_*$.
The reason one can make such an assumption is that, as one moves $x$ and $y$ perpendicularly along the geodesics away from~$\hat\gamma_*$, $d_H(x,y)$ increases and therefore the tortuosity when both $x$ and $y$ are at distance $\delta$ is a lower bound to the original tortuosity. See Figure~\ref{F:hyperbolic}.
%\note{better explanation using hyperbolic trignometry?}

What is left is to upper bound $d_H(x,y)$.
Let $x^*$, $p^*$, and $y^*$ be the points on $\hat\gamma_*$ that have minimum distance to $x$, $p$, and $y$ respectively.  By identity (2) one has
\[
\sinh \Paren{d_H(x,y)/2} = \cosh\delta \cdot \sinh \Paren{d_H(x^*,y^*)/2}
\]
and
\[
\sinh (r/2) = \cosh\delta \cdot \sinh \Paren{d_H(x^*,y^*)/4}.
\]
The second equality gives us
\[
{d_H(x^*,y^*)/2} = 2\sinh^{-1} \Paren{\frac{\sinh(r/2)}{\cosh\delta}},
\]
which we plug back in the first equation to get
\[
\sinh \Paren{d_H(x,y)/2} = \cosh\delta \cdot \sinh \Paren{2\sinh^{-1} \Paren{\frac{\sinh(r/2)}{\cosh\delta}}}.
\]
%From here we see that $d_H(x,y) = 2$ when $\delta = 0$.
Apply identity (1) on the first hyperbolic sine, one has
\ifsoda
\begin{align*}
\sinh \Paren{d_H(x,y)/2} &=2 \cdot {\sinh(r/2)} \cdot \Paren{1+ \Paren{ \frac{\sinh(r/2)}{\cosh\delta} }^2 }^{1/2}.
\end{align*}
\else
\begin{align*}
\sinh \Paren{d_H(x,y)/2} &= \cosh\delta \cdot 2 \cdot \sinh \Paren{\sinh^{-1} \Paren{\frac{\sinh(r/2)}{\cosh\delta}}} \cdot \cosh \Paren{\sinh^{-1} \Paren{\frac{\sinh(r/2)}{\cosh\delta}}} \\
&= \cosh\delta \cdot 2 \cdot \Paren{\frac{\sinh(r/2)}{\cosh\delta}} \cdot \cosh \Paren{\sinh^{-1} \Paren{\frac{\sinh(r/2)}{\cosh\delta}}} \\
&= 2 \cdot {\sinh(r/2)} \cdot \Paren{1+ \Paren{ \sinh\Paren{\sinh^{-1} \Paren{\frac{\sinh(r/2)}{\cosh\delta}}} }^2 }^{1/2} \\
&= 2 \cdot {\sinh(r/2)} \cdot \Paren{1+ \Paren{ \frac{\sinh(r/2)}{\cosh\delta} }^2 }^{1/2}.
\end{align*}
This shows that
\begin{align*}
{d_H(x,y)} &= 2 \cdot \sinh^{-1} \Paren{ 2 \cdot {\sinh(r/2)} \cdot \Paren{1+ \Paren{ \frac{\sinh(r/2)}{\cosh\delta} }^2 }^{1/2} }.
%\\
% &\le 2 \cdot \Paren{ 2 \cdot {\sinh(1/2)} \cdot \Paren{1+ \Paren{ \frac{\sinh(1/2)}{\cosh\delta} }^2 }^{1/2} } + \Paren{ 2 \cdot {\sinh(1/2)} \cdot \Paren{1+ \Paren{ \frac{\sinh(1/2)}{\cosh\delta} }^2 }^{1/2} }^2
\end{align*}
\fi
%
% From here it's only a matter of estimating $\cosh\delta$.  By identity (4) one has
% \begin{align*}
% 2 \cdot {\sinh(1/2)} \cdot \Paren{1+ \Paren{ \frac{\sinh(1/2)}{\cosh\delta} }^2 }^{1/2}
% &\le
% 2 \cdot {\sinh(1/2)} \cdot \Paren{1+ \Paren{ \frac{\sinh(1/2)}{1 + \delta^2} }^2 }^{1/2} \\
% &\le
% 2 \cdot {\sinh(1/2)} \cdot \Paren{1+ \Paren{ {\sinh(1/2)} \cdot (1-\delta^2) }^2 }^{1/2} \\
% &\le
% 2 \cdot \sinh(1/2) \cdot \Paren{1+ \frac{1}{2} \cdot (\sinh(1/2) \cdot (1-\delta^2))^2 }\\
% &\le
% 2 \cdot \sinh(1/2) + \sinh(1/2)^3 \cdot (1-\delta^2)^2,
% \end{align*}
% and therefore since we have shown that $d_H(x,y) = 2$ when $\delta=0$, $d_H(x,y) \le 2 - \Omega(\delta^2)$ and thus $\tort_p(\gamma) \ge \Omega(\delta^2) \ge \Omega (\e^2)$. \note{Using Taylor expansion on the original formula, the second derivative is zero!? Only the triple derivative has nonzero coefficient.}
Taylor expand $d_H(x,y)$ around $\delta=0$ gives us
\[
d_H(x,y) = 2r - \frac{(\sinh(r/2))^3}{\cosh(r/2)\cdot\cosh(r)}\delta^2 + O(\delta^4),
\]
and therefore $\tort(\gamma,t) \ge \Omega(\delta^2) \ge \Omega (\e^2)$.
\end{proof}

Let us emphasize here how resolutely hyperbolic this lemma is. It works because a line equidistant to a geodesic (here $\hat\gamma_*$) is \textit{not} a geodesic in hyperbolic geometry, and it is this defect of geodesicity that we exploit to lower bound the tortuosity.
Comparatively, in Euclidean geometry, a line equidistant to a straight line is again a straight line, and thus there is no analogue lemma.
This is why our proof techniques do not apply to the boundaryless torus.

\paragraph{Exposing points outside the neighborhood.}
Now we proceed to upper bound $\e$ so that the $\e$-neighborhood of the multigeodesic $\gamma_*$ does not cover the whole surface $\Sigma$.

\begin{lemma}
\label{L:strip-width}
Let $\gamma$ be an $n$-vertex multicurve on $\Sigma$.  Then the $\e$-neighborhood of $\gamma_*$ does not cover the whole surface~$\Sigma$ if $\e$ is at most $O(g/(n \log g))$.
\end{lemma}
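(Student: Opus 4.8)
The plan is a direct area comparison. We work throughout with the hyperbolic metric $d_H$ of Lemma~\ref{L:hyperbolic-metric}, so that by Gauss--Bonnet the total area is $A(\Sigma) = -2\pi\chi(\Sigma) = 2\pi(2g-2) = \Theta(g)$. I will bound the area of the neighborhood $N_\e(\gamma_*)\coloneqq\{x\in\Sigma : d_H(x,\gamma_*)<\e\}$ from above; if that bound is strictly smaller than $A(\Sigma)$, then $N_\e(\gamma_*)$ cannot be all of $\Sigma$, which is exactly the claim.

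First I would bound the length of the multigeodesic. By item~(1) of Lemma~\ref{L:hyperbolic-metric} the metric $d_H$ was built so that $\gamma$ is homotopic to a multicurve $\gamma'$ of length $O(n\log g)$; since each constituent geodesic of $\gamma_*$ is the length-minimizer in the free homotopy class of the corresponding constituent of $\gamma$ (equivalently of $\gamma'$), this gives $\len(\gamma_*)\le\len(\gamma')=O(n\log g)$.

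Next I would bound $A(N_\e(\gamma_*))$. Write $\gamma_*$ as a union of closed geodesics $\sigma_1,\dots,\sigma_r$ with $\sum_j\len(\sigma_j)\le\len(\gamma_*)$. For each $\sigma_j$ the normal exponential map $F_j\colon S^1\times(-\e,\e)\to\Sigma$, sending $(s,t)$ to the point at signed distance $t$ from $\sigma_j(s)$ along the geodesic perpendicular to $\sigma_j$ there, surjects onto $N_\e(\sigma_j)$ up to a set of measure zero (the finitely many singular points of $\sigma_j$), and in Fermi coordinates along $\sigma_j$---valid globally in the universal cover---its Jacobian is $\cosh t$. Since each $\sigma_j$ is a closed curve with no endpoint, no end-disk term appears, and self-intersections only decrease the true area, so
\begin{align*}
A(N_\e(\gamma_*)) &\le \sum_j A(N_\e(\sigma_j)) \le \sum_j \len(\sigma_j)\int_{-\e}^{\e}\cosh t\,dt\\
&= 2\sinh(\e)\sum_j\len(\sigma_j) \le 2\sinh(\e)\cdot\len(\gamma_*).
\end{align*}

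Finally I would combine the estimates. For $\e\le 1$---which covers every case where this lemma is used, since Lemma~\ref{L:tortuosity} already demands $\e$ below the systole and the systole is at least $1$ by item~(2) of Lemma~\ref{L:hyperbolic-metric}---we have $\sinh\e\le 2\e$, hence $A(N_\e(\gamma_*)) = O(\e\cdot n\log g)$. This is strictly below $A(\Sigma)=\Theta(g)$ as soon as $\e\le c\cdot g/(n\log g)$ for a sufficiently small constant $c$, and for such $\e$ the neighborhood $N_\e(\gamma_*)$ fails to cover $\Sigma$. The only delicate point is the area estimate: one must verify that the normal exponential map is well defined and smooth on $S^1\times(-\e,\e)$ even when $\sigma_j$ self-intersects (this is a statement about the parametrized curve, not its image), discard the measure-zero set of points whose nearest point on $\sigma_j$ is a self-intersection, and use the fact that Fermi coordinates around a geodesic cover the whole hyperbolic plane to justify the Jacobian $\cosh t$ for all $t$. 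The remaining ingredients---the length bound of Lemma~\ref{L:hyperbolic-metric}, Gauss--Bonnet, and the final arithmetic---are routine.
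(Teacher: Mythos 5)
Your proof is correct and follows the same overall strategy as the paper: bound $\len(\gamma_*)$ by $O(n\log g)$ via Lemma~\ref{L:hyperbolic-metric}(1), bound the area of the $\e$-neighborhood by $O(\e\cdot\len(\gamma_*))$, and compare against $A(\Sigma)=\Theta(g)$ from Gauss--Bonnet. The only difference is in the tube-area estimate: the paper tiles the neighborhood with Lambert quadrilaterals of short side $\e$ and sums their angle deficits, whereas you integrate the $\cosh t$ Jacobian in Fermi coordinates along each constituent geodesic; both yield the same $O(\e\ell)$ bound, and yours is the more standard analytic route while the paper's is more combinatorial.
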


\begin{proof}
Given any multicurve $\gamma$ with the corresponding multigeodesic $\gamma_*$ on the surface $\Sigma$ with the constructed hyperbolic metric $d_H$, the length of $\gamma_*$ is at most $O(n\log g)$ by Lemma~\ref{L:hyperbolic-metric}(1).
%\note{Could be improved using \cite{agps-bhmsc-16}?}
For small enough~$\e$, the area of the $\e$-neighborhood of a multicurve with length $\ell$ is at most $O(\e\ell)$.
To see this, cover the neighborhood with kite-like \emph{Lambert quadrilaterals} with length of the short sides as $\e$.  The only acute angle $\alpha$ of the quadrilateral is equal to $\arccos((\sinh \e)^2)$.  The area of the quadrilateral is equal to the angle deficit, which is $\pi/2 - \alpha$.
Therefore the area of the quadrilateral is at most $O(\e^2)$, and thus the total area of the $\e$-neighborhood on $\Sigma$ is at most $O(\e^2 \cdot \ell/\e) = O(\e\ell)$.

The area of the surface is precisely $(4g-4)\pi$.  (This follows directly from the Gauss-Bonnet theorem which is independent of the hyperbolic metric up to scaling.%
\footnote{Alternatively, one can derive the area directly: divide the fundamental polygon into $12g-6$ triangles by drawing straight-lines from the center of the polygon to all vertices, and use the area formula for triangles.})
This implies that for the $\e$-neighborhood of $\gamma_*$ to cover the whole surface $\Sigma$, the following holds:
\[
\e ~\ge~ \frac{(4g-4)\pi}{O(n\log g)} ~\ge~ \Omega\Paren{\frac{g}{n \log g}}.
\]
In other words, if we set $\e \le O(g/(n \log g))$, then the $\e$-neighborhood of $\gamma_*$ cannot cover the whole surface $\Sigma$, thus proving the lemma.
\end{proof}

\ifsoda
\else
Basmajian, Parlier, and Souto~\cite{basmajian2017geometric} showed that for any fixed genus $g$, the $O(1/n)$ bound in Lemma~\ref{L:strip-width} is tight up to logarithmic factors.
\fi
%\note{Well, the bound is now $g/n\log g$.  Check dependency on $g$?}
%there are $n$-vertex geodesics whose $\e$-neighborhood covers the whole surface with $\e \ge \Omega(1/\sqrt{n})$.
%It is an interesting open question to close this gap. \note{Really? But we can't hope for much improvement.}

\subsection{Putting it together}
Now we are ready to prove Lemma~\ref{L:follow-strip}.

\begin{proof}[of Lemma~\ref{L:follow-strip}]
We use Lemma~\ref{L:hyperbolic-metric} to endow $\Sigma$ with a hyperbolic metric.
By Lemma~\ref{L:hyperbolic-metric}(1), after applying $O(n^2)$ monotonic homotopy moves the resulting multicurve $\gamma'$ has length $O(n \log g)$.
Consider the set of disks centered at each point on the multicurve with radius $1/2$, which is smaller than half the systole by Lemma~\ref{L:hyperbolic-metric}(2); therefore all such disks are embedded in $\Sigma$.
Straighten any disk using Lemma~\ref{L:steinitz-ringel} if the tortuosity of the center point is at least $\e^2$.
Once every point on $\gamma'$ has tortuosity less than $\e^2$, by Lemma~\ref{L:tortuosity} the multicurve $\gamma'$ now lies in the $\e$-neighborhood of $\gamma_*$.

Straightening a disk takes $O(n^2)$ moves using Lemma~\ref{L:steinitz-ringel}.
The tortuosity at a center of each disk is a lower bound on the difference between the lengths of the multicurve $\gamma'$ before and after straightening.
From Lemma~\ref{L:hyperbolic-metric}(1) the length of $\gamma'$ is at most $O(n \log g)$.
Every time a disk is straightened the length of $\gamma'$ will drop by at least $\e^2$.
Since $\gamma'$ is non-contractible, the length of any curve homotopic to $\gamma'$ is at least the systole, which is $\Omega(1)$ by Lemma~\ref{L:hyperbolic-metric}(2).
Therefore at most $O(n \log g/\e^2)$ disks will be straightened before every point has tortuosity less than $\e^2$.
In total at most $O(n^3 \log g/\e^2)$ homotopy moves are performed.
From Lemma~\ref{L:strip-width}, setting $\e \coloneqq \Theta(g/(n \log g))$ concludes the proof of Lemma~\ref{L:follow-strip}.
\end{proof}

%%%%%%%%%%%%%%%%%%%%%%%%%%%%%%%%%%%%%%%%%%%%%%%%%%%%%%%%%%%
\section{Tightening Curves on Surface with Boundary}
\label{S:tightening-boundary}

In this section we prove Lemma~\ref{L:tighten-strip}.
% Let $\gamma$ be an $n$-vertex closed multicurve on an orientable surface $\Sigma$ of genus $g$ with boundary.  Then $\gamma$ can be tightened using $O(g^2n^4)$(?) monotonic homotopy moves.
Throughout the rest of the section, let $\Sigma$ be an orientable surface \emph{with} boundary and let $\gamma$ be a multicurve on $\Sigma$.

The second phase of the curve shortening algorithm by de Graaf and Schrijver~\cite{gs-mcmcr-97} starts with a multicurve $\gamma$ lying within an $\e$-neighborhood of its multigeodesic on $\Sigma$, where in some cases $\e$ is required to be exponentially small.
Unfortunately we cannot afford to drag $\gamma$ exponentially close to its geodesic which requires more than polynomially many moves (Section~\ref{S:move-to-geodesic}).
Instead, we make the observation that one can mimic this part of the algorithm in a combinatorial way, which, in particular, does not require the multicurve $\gamma$ to be close to its \emph{own} geodesic.
We pick an open neighborhood---called a \emph{pipe system}---of some underlying skeleton graph, such that $\gamma$ can be drawn in proper ways respecting the pipe system.
We then describe a way to morph the pipe systems using \emph{cluster} and \emph{pipe expansions},
a technique introduced by Cortese~\etal~\cite{cbpp-ecpg-09} in graph drawings (and later on applied to weak embeddings~\cite{weak,aaet-rwsp-17,aft-rweg-18} and crossing numbers~\cite{ft-cmpd-18}),
so that the multicurve inside the pipe system can be canonicalized using polynomially many monotonic homotopy moves.
Conceptually the expansion operations can be viewed as ways to morph the metric on surface $\Sigma$, so that curves on $\Sigma$ get transformed closer and closer to the geodesic with respect to the morphing metric.
After $\gamma$ is canonical we use the crossing minimization algorithm for flat braids to tighten $\gamma$~\cite{gp-icha-93,gs-mcmcr-97}.
%de Graaf and Schrijver's algorithm to tighten $\gamma$~\cite{gp-icha-93,gs-mcmcr-97}.

We first define an initial pipe system using system of arcs, and the multicurve is then made to respect the pipe system in Section~\ref{SS:respect-pipes}.
In Section~\ref{SS:expansion} we introduce the expansion operations formally, followed by a description of the main algorithm and its analysis in Section~\ref{SS:expansion-algorithm}.

\subsection{Putting Curves into a Pipe System}
\label{SS:respect-pipes}

Let $G$ be a (multi-)graph drawn on a surface $\Sigma$ with boundary; we refer to the vertices and edges of $G$ as \EMPH{clusters} and \EMPH{pipes}.
The drawing of $G$ is not necessarily an embedding; assume without loss of generality that all self-intersections of $G$ are between its edges, are transverse and involve at most two edges.
A \EMPH{pipe system $\Pi$}%
\footnote{also known as strip system~\cite{aft-rweg-18,weak} or thickening~\cite{ft-cmpd-18}}
of $G$ is a topological neighborhood of the drawing of $G$ on surface $\Sigma$ with a decomposition into regions corresponding to clusters and pipes.
\begin{itemize}\itemsep=0pt
\item For each cluster $u$ in $G$, a \EMPH{cluster region $D_u$} is a topological disk containing $u$.
\item For each pipe $uv$ in $G$, a \EMPH{pipe region $R_{uv}$} is a topological disk containing $uv$ that is disjoint from the interior of the cluster regions $D_u$ and $D_v$.
Notice that if two pipes intersect in the drawing of $G$, then the two corresponding pipe regions cross on the surface $\Sigma$. However, three pipe regions are never allowed to overlap at any common point.
%\note{This is a property that needs to be checked when claiming something is a pipe system.}
%
\item For each cluster $u$, there are disjoint connected subsets of the boundary of $D_u$ forming \EMPH{ends $A_{u,v}$}, one for each incident edge $uv$, in the order of the rotation system defined by the drawing of $G$; identify the intersection between $D_u$ and $R_{uv}$ with $A_{u,v}$.
%\note{orientation of $R_{uv}$?} \note{Also, careful; the ends did not partition $\partial\! D_u$.}
\end{itemize}
When there is no risk of confusion, we sometimes refer to cluster and pipe regions as clusters and pipes as well.
Let $\Pi$ be the collection of all the cluster and pipe regions; from time to time we also abuse the notation and refer to the \emph{union} of all cluster and pipe regions as~$\Pi$, so that one can safely use sentences like ``(part of) $\gamma$ lies in the pipe system $\Pi$''.
If pipe system $\Pi$ is constructed from graph $G$, we refer to $G$ along with its drawing as the \EMPH{skeleton} of $\Pi$.
Each region can be viewed as a tangle; a \EMPH{strand} of a cluster or pipe is a maximal subpath of $\gamma$ inside the corresponding region.

It is easier to talk about the pipe system by imposing geometry to the topological disks: throughout this section, we will generally endow the cluster regions $D_u$ with the metric of a Euclidean disk, and the pipe regions $R_{uv}$ with the metric of a thin Euclidean rectangle.
However we emphasize that while some constructions and proofs in the following sections are described using geometry, they can be rephrased using purely combinatorial languages.

Now the plan is to construct an initial pipe system $\Pi_0$ using the system of arcs from Lemma~\ref{L:system-of-arcs}.
Let $\Xi$ denote the system of arcs given by Lemma~\ref{L:system-of-arcs}.
The \EMPH{pipe system $\Pi_0$} is obtained by taking the dual graph of $\Xi$ as a skeleton graph $G$, which consists of one unique cluster and $O(g+b)$ (self-loop) pipes.
But since we care about its precise position with respect to $\gamma$, we need to describe the construction of $\Pi_0$ more carefully.
We replace each arc of $\Xi$ by two identical copies infinitesimally close to each other, co-bounding a $4$-gon with two infinitesimal subpaths of the boundary.
Each of these $4$-gons is a pipe of the pipe system $\Pi_0$ while the ``big'' component corresponding to the unique polygon obtained by cutting $\Sigma$ along $\Xi$ is the single cluster of $\Pi_0$.
Note that $\gamma$ is trivially contained in the union of the regions of this pipe system, since this union is the whole surface $\Sigma$.

We will prove in Lemma~\ref{L:respect-pipes} that the multicurve~$\gamma$ can be made to \emph{respect} $\Pi_0$ by satisfying some good properties.
Such modified $\gamma$ along with the pipe system $\Pi_0$ will be the starting point of the algorithm.

\paragraph{Respecting pipe system.}
We say that a multicurve $\gamma$ \EMPH{respects} a given pipe system $\Pi$ if
\begin{enumerate}[(1)]\itemsep=0pt
\item $\gamma$ lies completely in $\Pi$;
\item all strands in any cluster or pipe region are simple and no two strands intersects more than once;
\item each component of the intersection between $\gamma$ and any end of $\Pi$ is a single transverse crossing;
\item $\gamma$ never intersects the same end consecutively more than once;
%\note{twice? do we have spurs? no. This is useful to avoid redundant subwalks}
  in other words, whenever a curve enters a topological disk (whether it's a cluster or pipe region) from one end, the curve must leave from another end of the disk; and
% \note{I think we should enforce (5) directly.}
\item within the intersection of a pair of pipe regions, pairs of strands from the same pipe region are not allowed to cross.
\end{enumerate}

Before we continue, we quickly comment that
%the following easy lemma to move the intersections from a pipe region into its incident cluster regions; in other words,
given any multicurve $\gamma$ respecting a pipe system, one can safely assume the following additional property as part of the definition.
%at any point throughout the rest of the section.
%
\begin{enumerate}[(6)]
\item No intersections of $\gamma$ are between strands of the same pipe.
\end{enumerate}

\begin{lemma}
\label{L:sliding}
Let $\Pi$ be a pipe system.
Let $R$ be a pipe region of $\Pi$, which is crossed transversely by other pipe regions $R_1, \ldots, R_k$. %\note{no triple crossings A: specified in the definition of pipe system}
Let $\gamma$ be a multicurve with $n$ crossings respecting $\Pi$, but the strands of $\gamma$ may cross in the pipes.
Then one can find a sequence of $O(|R|^2)$
%: do we ever use the stronger bound?
monotonic homotopy moves to push all the crossings between strands of $R$ to an incident cluster region,
where $|R|$ denotes the number of crossings between strands of $R$.
\end{lemma}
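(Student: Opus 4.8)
The plan is to process the crossings among strands of $R$ one at a time, each time pushing one crossing out of $R$ and into an incident cluster, and to argue that the whole procedure stays monotonic. Picture the pipe region $R$ as a thin rectangle whose two short sides are the ends $A_{u,\cdot}$ and $A_{v,\cdot}$ glued to the incident clusters $D_u$ and $D_v$; the strands of $R$ are simple arcs going from one short side to the other (by property~(4) of respecting, together with simplicity and the fact that no two strands cross more than once inside $R$). Since each pair of strands of $R$ crosses at most once, the crossings among the $|R|$ strands are exactly the inversions of the permutation these strands induce between the two ends of $R$; so there are at most $\binom{|R|}{2}=O(|R|^2)$ such crossings. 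The idea is that whenever two strands $\alpha,\beta$ of $R$ cross at a point $x$ inside $R$, one of the two sides of $R$—say the $u$-side—is "closer" to $x$ in the sense that no other strand of $R$ separates $x$ from that side. Then $\alpha$ and $\beta$ together with a subarc of the $u$-short-side of $R$ bound a trigon whose interior meets no strand of $R$; it may however be crossed transversely by some of the crossing pipe regions $R_1,\dots,R_k$, i.e. by strands belonging to those other pipes.

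First I would make precise the choice of $x$: among all crossings of strands of $R$, pick one that is innermost with respect to a short side, exactly as in the proof of Lemma~\ref{L:steinitz-empty}. This yields a trigon $T$ bounded by a subarc of $\alpha$, a subarc of $\beta$, and a subarc of the short side of $R$, with no strand of $R$ inside $T$. Next I would empty $T$ of the finitely many strands of the crossing pipes $R_i$ that pass through it, using $\arc33$ moves: by Corollary~\ref{C:empty-trigon} applied in a small neighborhood of $T$, this costs $O(m^2)$ homotopy moves where $m$ is the number of vertices inside $T$, and these are all $\arc33$ moves so they are monotonic. (Here one must check that emptying $T$ does not create new crossings among strands of $R$ or violate the respecting conditions in the other pipes — a triangle flip only reroutes strands locally, and we may keep the neighborhood of $T$ inside $R$, so no end of $\Pi$ is affected.) Once $T$ is empty, the crossing $x$ can be pushed across the short side of $R$ into the incident cluster region $D_u$ by a single homotopy move — in fact this is a $\arc33$ move involving $\alpha$, $\beta$, and the end $A_{u,\cdot}$ viewed as the third strand of the tangle, again monotonic. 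After this move, $\alpha$ and $\beta$ no longer cross inside $R$, so the number of crossings among strands of $R$ has dropped by one; meanwhile property~(4) may be temporarily broken at $D_u$ but we do not need it, since the lemma only promises to move the crossings to an incident cluster.

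Iterating over all $O(|R|^2)$ crossings of $R$, the total move count is $\sum O(m_j^2)$ plus $O(|R|^2)$ single moves, where $m_j$ is the number of foreign strands inside the $j$-th trigon; since each $m_j$ and the number of iterations are each $O(|R|^2)$-ish in the worst bound we should be careful — but the statement asks only for $O(|R|^2)$ total, so the accounting has to be tighter than the naive product. The cleanest fix is to observe that we never need to fully empty a trigon of \emph{all} foreign strands one crossing at a time: instead, sweep all crossings of $R$ out in a single coordinated pass, analogous to the inductive strand-by-strand straightening in Lemma~\ref{L:steinitz-ringel}. Concretely, pick the strand $\alpha$ of $R$ whose endpoint on the $u$-side is extreme, slide $\alpha$ monotonically toward the $u$-short-side sweeping out all of its crossings with other strands of $R$ (each crossing leaving $R$ into $D_u$ by one move, after the relevant trigon is emptied), then recurse on $R$ minus $\alpha$ inside a slightly thinner rectangle. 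Charging each emptied trigon to the crossing of $R$ it resolves, and using that the foreign strands passing through a given trigon are themselves strands of pipes crossing $R$ and hence bounded in number by the local picture, one gets a total of $O(|R|^2)$ moves. I expect the bookkeeping in this last paragraph — getting the quadratic rather than a higher-degree bound, and checking that sweeping $\alpha$ out does not disturb the respecting properties in the crossing pipes $R_i$ — to be the main obstacle; everything else is a direct application of Lemma~\ref{L:steinitz-empty} and Corollary~\ref{C:empty-trigon}.
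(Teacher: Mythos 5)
Your strategy matches the paper's at the top level: pick an extremal (innermost) crossing $z$, consider the trigon $T$ it bounds with an end of $R$, and push $z$ out into the incident cluster by $\arc33$ moves. But there is a genuine gap, and your own last paragraph correctly senses it without resolving it. You reach for Corollary~\ref{C:empty-trigon} to clear $T$ of foreign strands, which costs $O(m^2)$ moves in the number of vertices in $T$; summed over crossings this does not give the stated bound, and your proposed ``sweep'' repair does not fix it either without a further structural fact. The missing observation is that $T$ has \emph{no interior vertices at all}, so there is nothing to empty in the sense of Corollary~\ref{C:empty-trigon}. Indeed, by Property~(5) of respecting, strands of a single transverse pipe $R_i$ do not cross one another inside $R \cap R_i$; and by the pipe-system definition no three pipe regions overlap, so $R\cap R_i$ and $R\cap R_j$ are pairwise disjoint, hence foreign strands from different transverse pipes never meet inside $R$ at all. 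Together with extremality of $z$ (which, as you note, keeps any other strand of $R$ out of $T$), the interior of $T$ contains only pairwise disjoint arcs of foreign strands, each crossing $\alpha$ and $\beta$ exactly once. Therefore $z$ can be slid directly past each such arc with one $\arc33$ move, and the move count is simply (crossings to push) times (foreign strands passed), which is the paper's accounting. This is the essential content you are missing; once you have it, your iteration is fine and the worry about a ``naive product'' disappears.

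Two smaller slips. First, the lemma defines $|R|$ as the number of crossings among strands of $R$, not the number of strands, so your $\binom{|R|}{2}$ step misreads the statement. Second, once $T$ is clear of foreign strands, moving $z$ across the end into the cluster is an ambient isotopy that changes no crossing of $\gamma$ and costs zero homotopy moves; it is not a $\arc33$ move with the end $A_{u,\cdot}$ playing the role of a third strand, since ends of $\Pi$ are not strands of $\gamma$.
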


%\begin{lemma}
%\label{L:sliding}
%Let $\Pi$ be a pipe system.
%Let $R$ be a pipe region of $\Pi$, which is crossed transversely by other pipe regions $R_1, \ldots, R_k$.
%Let $\gamma$ be a multicurve with $n$ crossings respecting $\Pi$, but the strands of $\gamma$ may cross in the pipes.
%Then one can find a sequence of $O(n^3)$ monotonic homotopy moves to push all the crossings between strands within a pipe to an incident cluster region.
% $O(|R| \cdot n)$: do we ever use the stronger bound?
% where $|R|$ denotes the number of crossings between strands of $R$.
%\end{lemma}

\begin{proof}
 We will push the crossings between strands of $R$ into an incident cluster region using a controlled number of $\arc33$ moves.
 A crossing between two strands of $R$ is called \textit{extremal} if, out of the four substrands that it defines, two of those do not cross any other strand of $R$ and end at the same end of $R$.
 %By Property~(2) an extremal crossing in $R$ must exist.
 Pick such an extremal crossing $z$ between two strands inside $R$: by orienting $R$ and its strands from one end to the other, leftmost and rightmost crossings will be extremal because of Properties~(2) and~(4).
 Now, the obstruction to simply moving~$z$ to an incident cluster comes from the other pipe regions crossing $R$ transversely, as their strands stand in the way.
 As part of the definition of a pipe system, no two pipe regions $R_i$ and $R_j$ intersects $R$ at a common point.

 By Property~(5), the strands of these transverse pipe regions do not cross within the intersection.
% \begin{TODO}
% I think we should prove (5) directly.
% \begin{itemize}
% \item First, prove that any strand entering an intersecting region must leave from the other side.
% (Is this possible?  Or should it be the new property to assume?  We might not have the minimum crossing assumption, like we did in the system-of-arcs case.)
% \item Next, argue using similar idea to Steinitz lemma that the triangle formed by and extremal vertex can be emptied and removed monotonically.
% \item Thus one can remove crossings one by one and enforce (5).
% \end{itemize}
% \end{TODO}
%
% Now we prove that any strand entering the region where two pipes $R_1$ and $R_2$ intersect must pass straight through.
% By Property~(2), if a strand of $R_1$ enters and leaves $R_2$
%
%
 Thus, the crossing $z$ can be pushed past strands of transverse pipes using only $\arc33$ moves.
 See Figure~\ref{F:sliding}.
 Since there are $O(|R|)$ crossings to push and each is pushed past $O(n)$ strands from transverse pipes (since each transverse strand induces at least once crossing), this can be done using $O(n^2)$ $\arc33$ moves.
%Let $x$ be a crossing point of $\gamma$ in a pipe region $R$. By Property (2) there are no other crossings between the two strands forming $x$, and thus $x$ defines two trigons with the ends of $R$.
%The number of strands crossing the trigons and the number of vertices inside the trigon is $O(n)$. We pick one of the trigons arbitrarily and make it empty in $O(n^2)$ moves using Corollary~\ref{C:empty-trigon}, and then we flip it using one additional $\arc33$ move. %. This is pictured in Figure~\ref{F:sliding}.
%The multicurve $\gamma$ still respects the pipe system after these moves \note{Need to argue why strands don't escape their pipes}. After $O(n^3)$ moves, there are no crossings left in any pipe region.
\end{proof}

\begin{figure*}[h!]
  \centering
  % \def\svgwidth{0.7\textwidth}
  % \scriptsize
  % \input{./_tex}
  \includegraphics[width=0.8\textwidth]{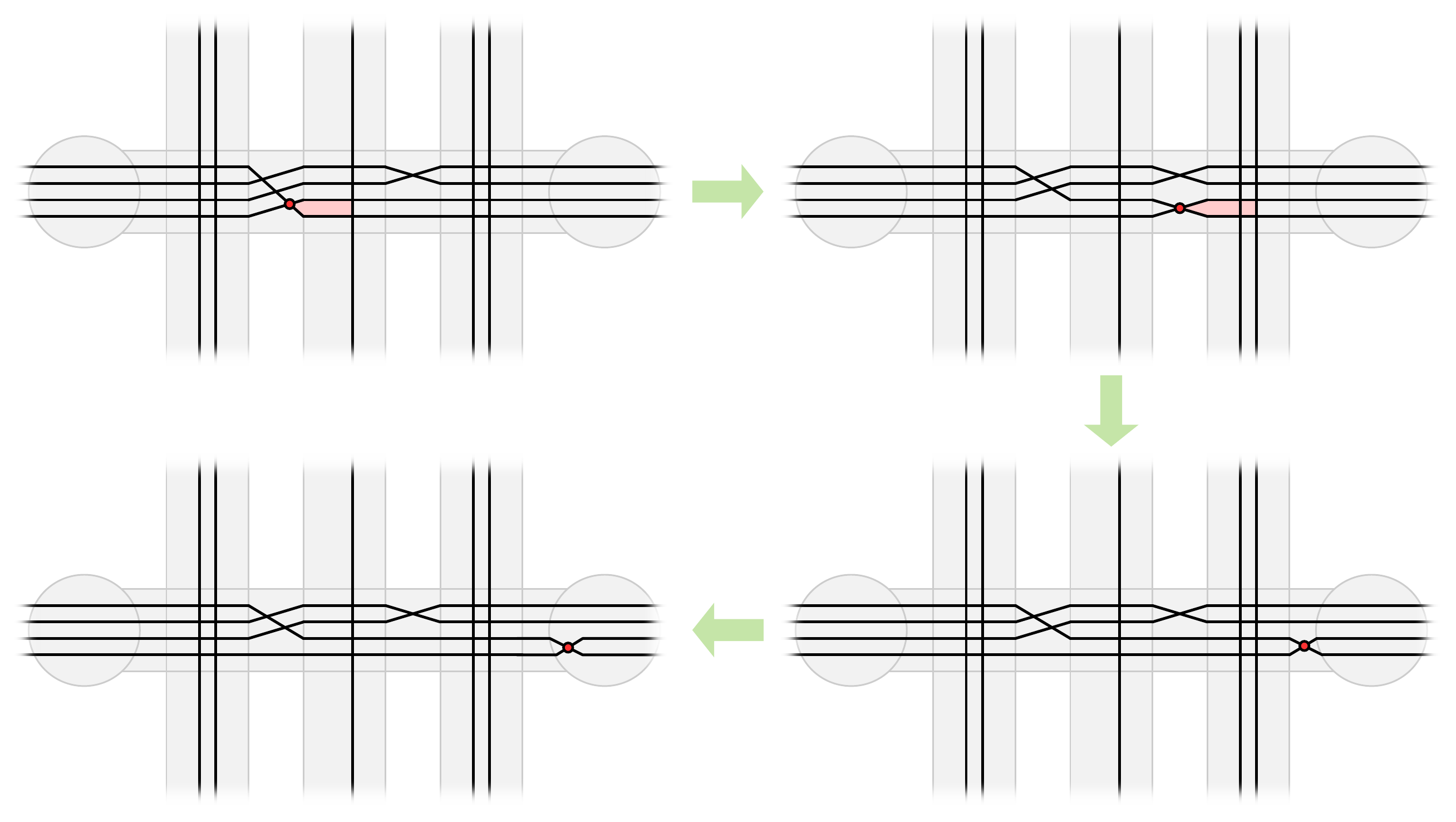}
  \caption{The red crossing within a pipe is pushed towards an incident cluster by doing $\arc33$ moves on the triangles marked in pink.
	}
  \label{F:sliding}
\end{figure*}

Using this lemma on all the pipe regions costs $O(n^2)$ moves.
Observe that all crossings of a multicurve $\gamma$ respecting a pipe system must either be inside the clusters, or between two intersecting pipes where the strands of the two pipes intersect in a grid-like pattern.
The strands inside a pipe can be drawn in parallel connecting from one end to the other, preserving their order on each end.

\paragraph{Closed walks on skeleton graph.}
If a multicurve $\gamma$ respects a pipe system $\Pi$ whose skeleton graph is $G$, one can define \EMPH{closed walks $C$} associated with $\gamma$ on $G$ as follows.
Let $\gamma_i$ be one of the constituent curve of $\gamma$.
Let
\[
A_{u_0,u_1}, A_{u_1,u_0}, A_{u_1,u_2}, A_{u_2,u_1}, \dots, A_{u_{w-1},u_0}, A_{u_0,u_{w-1}}
\]
be the sequence of ends of $\Pi$ that $\gamma_i$ intersects.  Then closed walk $C_{\Pi}(\gamma_i)$ in $G$ is defined to be
\[
[u_0, u_1, \dots, u_{w-1}, u_0].
\]
Closed walks are considered without basepoint, that is, up to cyclic permutations.
Closed walks $C_\Pi(\gamma)$ are defined to be the collection of all closed walks $C_{\Pi}(\gamma)$, each corresponding to a constituent curve $\gamma_i$ of $\gamma$.
Let \EMPH{$C(u)$} and \EMPH{$C(uv)$} denote the vertices and edges of closed walks $C$ that correspond to the cluster $u$ and pipe $uv$, respectively.
Let the \EMPH{weight $n(uv)$} be the number of times $C$ uses pipe $uv$ in $G$ (in either direction); one has $n(uv) = \abs{C(uv)}$.
Observe that the closed walks $C$ do not contain any \emph{spurs}---subwalks of the form $[u,v,u]$---by Property~(4) in the definition of respecting a pipe system.

Now we show that any multicurve $\gamma$ lying in the initial pipe system $\Pi_0$ can be made to respect it using a polynomial number of monotonic homotopy moves.

\begin{lemma}
\label{L:respect-pipes}
Let $\Sigma$ be a genus-$g$ orientable surface with $b$ boundary components.
Any multicurve $\gamma$ on $\Sigma$ with $n$ crossings can be made to respect the pipe system $\Pi_0$ using $O((g+b) n^2)$ monotonic homotopy moves.
Furthermore, the tightening problem remains unchanged: any tightening of $\gamma'$ within the pipe system is also a tightening of $\gamma$.
The length of the closed walk $C$ corresponding to $\gamma$ on $G$ is at most $O((g+b) n)$.
\end{lemma}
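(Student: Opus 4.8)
\textbf{Proof proposal for Lemma~\ref{L:respect-pipes}.}

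The plan is to verify each of the five (and the auxiliary sixth) properties in the definition of ``respecting a pipe system'' in turn, bounding the number of monotonic homotopy moves needed to enforce each one, and then checking that the total is $O((g+b)n^2)$ and that the combinatorial length bound on $C$ follows from Lemma~\ref{L:system-of-arcs}. First I would observe that Property~(1) is free: the union of regions of $\Pi_0$ is all of $\Sigma$, so $\gamma$ already lies in $\Pi_0$. Next, for each arc of $\Xi$ replaced by a thin $4$-gon pipe, the crossing property from Lemma~\ref{L:system-of-arcs} says each edge of $\gamma$ crosses that arc at most twice; I would use this to argue that after an initial cleanup, $\gamma$ crosses each end of a pipe region transversely and not too many times, giving Property~(3) and bounding the total number of end-crossings by $O((g+b)n)$, which is also the length bound on the closed walk $C$ (each pipe traversal contributes a bounded number of end-crossings, and there are $O(g+b)$ pipes each used $O(n)$ times).

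The substantive work is enforcing Properties~(2), (4), and~(5), i.e.\ straightening out the behavior of $\gamma$ inside each cluster and pipe region and at their overlaps. For Property~(4) (no immediate re-entry into the same end, equivalently no spurs), I would push any strand that enters and leaves a cluster or pipe through the same end back out across that end using $\arc{2}{0}$ and $\arc{3}{3}$ moves---each such ``hairpin'' can be removed by a bigon-type move near the end, and since there are $O(n)$ crossings total and $O(g+b)$ ends, this costs $O((g+b)n)$ moves. For Property~(2) (strands inside a region are simple and pairwise cross at most once), I would treat each cluster and pipe region as a tangle and invoke Lemma~\ref{L:steinitz} (Steinitz bigon removal) iteratively on innermost embedded bigons and monogons inside each region; summing the $n_R + s_R + 1$ bound over all regions, and using that the total number of interior vertices is at most $n$ and the total number of strands across all regions is $O((g+b)n)$, this is $O((g+b)n^2)$ moves. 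For Property~(5) (within a pipe--pipe overlap, two strands of the same pipe don't cross), I would apply Lemma~\ref{L:sliding}: push all crossings between strands of a fixed pipe $R$ into an incident cluster using $O(|R|^2)$ $\arc{3}{3}$ moves; summing $\sum_R |R|^2 \le (\sum_R |R|)^2 \le O(n^2)$ over the $O(g+b)$ pipes (crudely, $O((g+b)n^2)$, or more carefully $O(n^2)$ as in the proof of Lemma~\ref{L:sliding}) stays within budget. Property~(6) then follows since any surviving same-pipe crossing has been pushed into a cluster.

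The main obstacle I anticipate is bookkeeping the interaction between the cleanup steps: removing a bigon inside one region (Property~(2)) can change how $\gamma$ meets the ends, potentially reintroducing spurs (violating Property~(4)) or crossings in pipe overlaps (violating Property~(5)), so I need to argue the cleanup can be ordered---or iterated to a fixed point---without blowing up the move count. The standard way to handle this is a monovariant argument: every move in the cleanup is monotonic (non-increasing in crossing number), and each bigon/monogon removal strictly decreases the number of crossings, so the number of ``expensive'' cleanup rounds is $O(n)$; the sliding and spur-removal steps do not increase the crossing number and only need to be reapplied $O(n)$ times, each time costing $O((g+b)n)$ moves, for a total of $O((g+b)n^2)$. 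Finally, I would check monotonicity globally (every move invoked---Steinitz bigon removal via Lemma~\ref{L:steinitz}, the sliding moves via Lemma~\ref{L:sliding}, and the hairpin removals---is monotonic) and observe that none of these moves changes the homotopy class of $\gamma$ nor the geometric intersection number, so any tightening of $\gamma'$ inside $\Pi_0$ is a tightening of $\gamma$; this last point uses that all moves happen within $\Sigma$ and that $\gamma'$ is homotopic to $\gamma$.
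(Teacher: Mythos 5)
Your proposal has the right ingredients (Steinitz bigon removal, eliminating bigons between $\gamma$ and the ends, the length bound from Lemma~\ref{L:system-of-arcs}) but organizes them in a more fragile way than the paper, and misses a structural observation that makes the paper's proof simpler.

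The paper's proof is a clean two-phase global cleanup: (i) remove all embedded monogons and bigons of $\gamma$ globally using Lemma~\ref{L:steinitz}, in $O(n^2)$ moves; (ii) remove all bigons between $\gamma$ and the ends of $\Pi_0$, innermost-first, costing $O(|\gamma\cap\Pi_0|\cdot n)=O((g+b)n^2)$ moves. After these two phases, Properties~(1)--(5) are then checked to hold simultaneously with no iteration. Crucially, the paper observes that Property~(5) is \emph{vacuous} for $\Pi_0$: the pipe regions of $\Pi_0$ are thin $4$-gons around the pairwise-disjoint arcs of $\Xi$, so no two pipe regions intersect and there is nothing to clean up. Your plan invokes Lemma~\ref{L:sliding} to enforce Property~(5), which is wasted effort and suggests a misreading of how $\Pi_0$ is built; Lemma~\ref{L:sliding} is needed later, for the pipe systems produced by expansions, not here.

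The more substantive issue is your iteration argument. You propose enforcing the properties one at a time and then iterating to a fixed point, justified by a monovariant. But the monovariant you give (crossing number non-increasing, bigon removals strictly decreasing it) does not by itself show the loop terminates at a state where all properties hold: rounds in which the crossing number stays constant (pure $\arc33$ sliding, spur removal with no $\arc20$) can in principle recur indefinitely under your ordering, and you have not shown they cannot. In fact the worry you raise is in the wrong direction: removing an embedded bigon \emph{inside} a region is interior to that region and does not change how $\gamma$ meets the ends, so it cannot reintroduce spurs; the direction that needs an argument is whether the end-bigon removal (your Property~(4) step) can create embedded bigons inside a region. The paper sidesteps this entirely by doing the global embedded-bigon removal \emph{first} and the end-bigon removal \emph{second}, and then verifying the five properties once at the end rather than maintaining them incrementally. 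Finally, your $O((g+b)n)$ estimate for the Property~(4) cleanup is too optimistic: each bigon between $\gamma$ and an end may require sweeping past $O(n)$ vertices, and there are $O((g+b)n)$ such bigons, so the correct bound for this step is $O((g+b)n^2)$; this does not break the overall budget, but the accounting as written is wrong.
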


\begin{proof}
% Let $\Xi$ denote the system of arcs given by Lemma~\ref{L:system-of-arcs}.
% The pipe system $\Pi_0$ is obtained by taking the dual graph of $\Xi$ as a skeleton graph $G$, which consists of one unique cluster and $O(g+b)$ self-loops.
% But since we care about its precise position with respect to $\gamma$, we need to describe the construction of $\Pi$ more carefully.
% We replace each arc of $\Xi$ by two identical copies infinitesimally close to each other, co-bounding a $4$-gon with two infinitesimal subpaths of the boundary.
% Each of these $4$-gons is a pipe of the pipe system $\Pi$ while the ``big'' component corresponding to the unique polygon obtained by cutting $\Sigma$ along $\Xi$ is the single cluster of $\Pi$.
% %
% Note that $\gamma$ is trivially contained in the union of the regions of this pipe system, since this union is the whole surface $\Sigma$.
%
The very first step is to modify $\gamma$ so that it contains no embedded monogons or embedded bigons within $\Pi_0$.
This step follows from Steinitz algorithm (Lemma~\ref{L:steinitz}) and takes $O(n^2)$ moves, since removing each bigon or monogon takes $O(n)$ moves and this may need to be done $O(n)$ times.
%\note{Problematic; such moves are not in pipe systems unless we use the one that covers the whole surface.}

We then use $O(|\gamma \cap \Pi_0| \cdot n)$ monotonic homotopy moves to ensure that $\gamma$ does not form any bigon with any end of $\Pi_0$.
This is identical to the first step in Chang \etal~\cite[Lemma~4.4]{untangle}; we repeat the main idea for clarity.
Assuming that there exists such a bigon, let $B$ denote a minimal embedded bigon (under containment) between $\gamma$ and an end of $\Pi_0$.
By minimality, the only subpaths of $\gamma$ occurring inside $B$ have to be simple and crosses $B$ transversely, from one side to the other.
Thus, we can remove $B$ by moving the subpath of $\gamma$ bounding $B$ across, going over each vertex one by one with a $\arc{3}{3}$ move.
We refer to Lemma~4.4 of Chang \etal~\cite{untangle} for more details and an illustration of this process.
Removing all the bigons between $\gamma$ and the ends of $\Pi_0$ using this technique costs $O(|\gamma \cap \Pi_0| \cdot n)$ monotonic homotopy moves.

We immediately have Property~(1) because $\gamma$ is contained in the union of the regions of $\Pi_0$.
Property~(2) follows from the fact that we first tightened~$\gamma$ to remove embedded monogons and bigons: if any strand of $\alpha$ was non-simple in a cluster or a pipe, it would form such a monogon or bigon.
Since $\gamma$ crosses the ends of $\Pi_0$ transversely, the crossings between $\gamma$ and any end of $\Pi_0$ is a point, yielding Property~(3).
We removed bigons between the ends $\Pi_0$ and $\gamma$, and thus $\gamma$ cannot intersect the same end of $\Pi_0$ consecutively more than once as it would yield a bigon. This gives Property~(4).
Property~(5) is true as the interiors of different pipe regions of $\Pi_0$ do not intersect.
%\note{This is not true.  We cannot make pipe regions in arbitrary pipe systems infinitesimally thin; there might be crossings inside and no way to slide out.}

As all the homotopy moves are performed within the pipe system $\Pi_0$, any tightening that can be obtained from the original $\gamma$ on the surface $\Sigma$ can also be realized by a tightening of the new $\gamma$ within $\Pi_0$ that covers the whole $\Sigma$.
Furthermore, based on the fact that each edge of $\gamma$ intersects $\Xi$ only $O(g+b)$ times, the length of the closed walks $C$ constructed from $\gamma$ will be at most $O((g+b)n)$.
%The length of the closed walk $C$ corresponding to $\gamma$ follows directly from the fact that the number of intersections between $\gamma$ and $\Xi$ is $O(gn)$. This concludes the proof.
Thus $\gamma$ can be made to respect $\Pi_0$ using $O(|\gamma \cap \Pi_0| \cdot n + n^2)=O((g+b)n^2)$ monotonic homotopy moves.
\end{proof}

\subsection{Tightening curves using local operations}
\label{SS:expansion}

We define two operations called the \EMPH{cluster expansion} and \EMPH{pipe expansion} performed on a multicurve~$\gamma$ lying in a pipe system $\Pi$ in this subsection.
Such operations have been used to study clustered planarity in graph drawings~\cite{cbpp-ecpg-09,ft-aecpt-2019}, weakly simple polygons~\cite{weak,aaet-rwsp-17}, weak embeddings of graphs~\cite{aft-rweg-18}, and crossing numbers~\cite{ft-cmpd-18}.
Our definition most closely resembles the one in Fulek and T\'oth~\cite{ft-cmpd-18}; both allow the edges of skeleton graph $G$ to cross in the drawing.
The main differences are, instead of preserving crossing numbers, we need to argue that the tightening problem remains the same before and after the expansion; and unlike the previous papers where expansions can be done instantly, we have to implement each expansion operation using monotonic homotopy moves.
While the constructions are described geometrically, the exact shape and position of the regions are mostly artificial and irrelevant; the only important thing is the change to the combinatorial structure.

\begin{figure*}[t!]
  \centering
  % \def\svgwidth{0.8\textwidth}
  % \scriptsize
  % \input{./_tex}
  \includegraphics[width=0.8\textwidth]{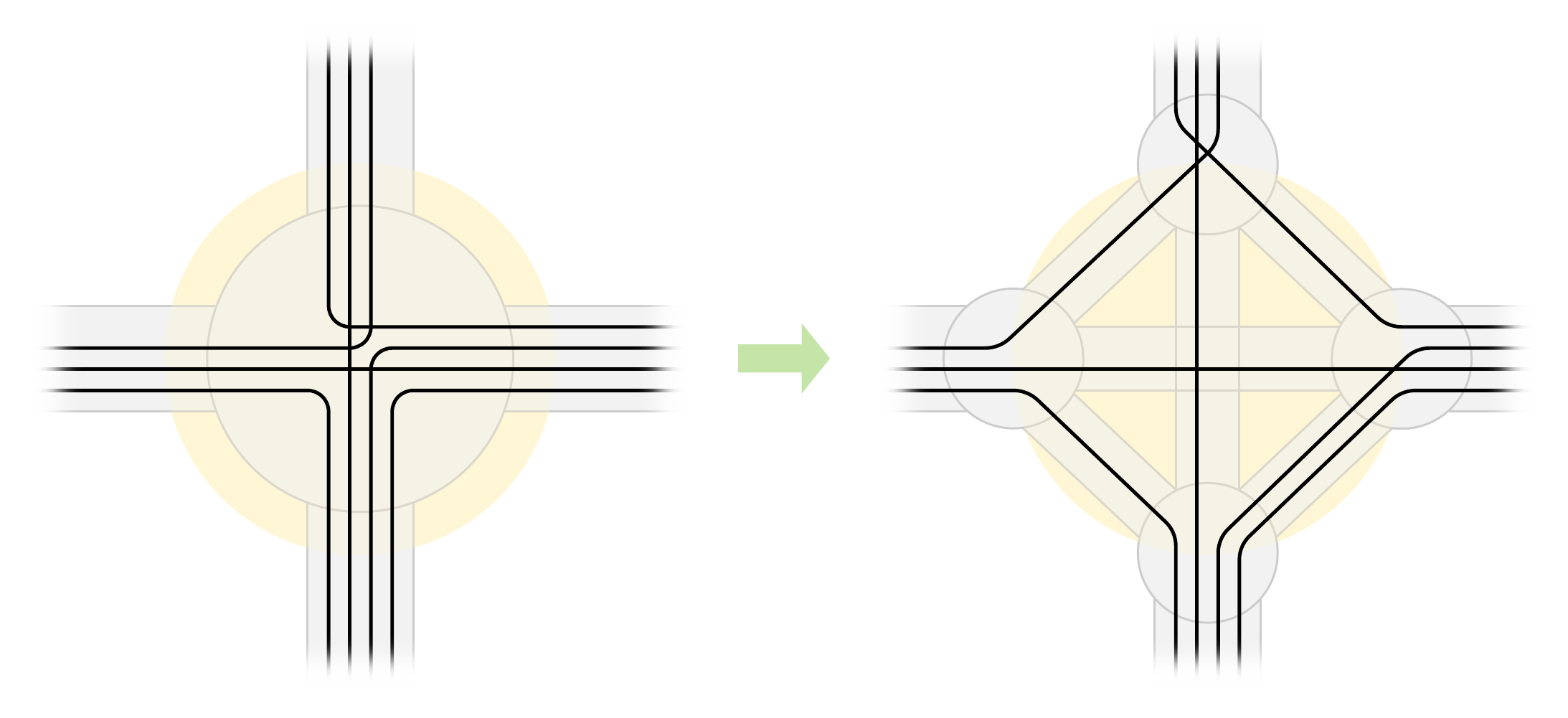}
  \caption{Expanding a cluster $D_u$. The infinitesimal ends have been widened for visibility.}
  \label{F:cluster-expansion}
\end{figure*}

\paragraph{Cluster expansion.}
We perform the following \EMPH{cluster expansion} on cluster $u$ and its region $D_u$ in a pipe system $\Pi$ with skeleton graph $G$.
To describe the construction, we endow $D_u$ with Euclidean metric such that all the ends are infinitesimally small, and position these ends so that there are no triple intersecting strands of $D_u$.

We modify $\gamma$ by replacing every strand of $D_u$ with a straight line, and modify $G$ and $\Pi$ accordingly (see Figure~\ref{F:cluster-expansion}):
For each pipe $uv$ incident to $u$, create a new cluster $[uv]$, whose corresponding cluster region in $\Pi$ is an elliptical neighborhood of the end $A_{u,v}$, and rename the pipe $uv$ to $[uv]v$.
For every pair of pipes $uv$ and $uw$ in $G$ insert a pipe $[uv][uw]$ if there was a strand of $\gamma$ that connects the end $A_{u,v}$ with the end $A_{u,w}$; insert the corresponding rectangular pipe regions in $\Pi$ accordingly, so that in the drawing of $G$ and $\Pi$ the pipes intersect transversely.
By the choice of Euclidean metric on $D_u$, these pipe regions can be taken to be arbitrarily thin and so that no three of them overlap at a point.
%The drawings of $G$ and $\Pi$ are defined naturally with respect to the drawing of the original $G$ and $\Pi$. \note{too sloppy? A: I think that it is fine with the picture.}
Finally, remove cluster $u$ from $G$ and region $D_u$ from $\Pi$.
Denote the multicurve and pipe system after cluster expansion as $\tilde\gamma$ and $\tilde\Pi$, respectively.

\begin{lemma}
\label{L:cluster-expansion}
First, the cluster expansion can be implemented using $O(n^2)$ monotonic homotopy moves, such that after cluster expansion the new multicurve $\tilde\gamma$ still respects the modified pipe system $\tilde\Pi$.
Second, if we denote by $\tilde\gamma_*$ any tightening of $\tilde\gamma$ within the modified pipe system $\tilde\Pi$ (viewed as a topological space), then $\tilde\gamma_*$ is also a tightening of $\gamma$ within $\Pi$.
\end{lemma}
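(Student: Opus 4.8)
\textbf{Proof proposal for Lemma~\ref{L:cluster-expansion}.}

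The plan is to carry out the cluster expansion in two conceptual phases that mirror the two parts of the statement: first a \emph{combinatorial} phase that changes how we \emph{label} the regions (replacing $D_u$ by small clusters $[uv]$ around each end together with thin pipes $[uv][uw]$ between ends joined by a strand), and then a \emph{geometric} phase that actually pushes $\gamma$ around by homotopy moves so that it literally fits inside $\tilde\Pi$. For the move count, I would first apply Lemma~\ref{L:steinitz-ringel} inside the disk $D_u$, viewing it as a tangle with at most $O(n)$ strands and at most $O(n)$ vertices: straightening this tangle with respect to the Euclidean metric on $D_u$ costs $O(n^2)$ monotonic homotopy moves and makes every strand of $D_u$ a straight segment. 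Once the strands are straight, the only crossings inside $D_u$ are the essential ones forced by the cyclic order of the ends, and we can choose the elliptical neighborhoods of the ends and the thin rectangular pipe regions $[uv][uw]$ so that each straightened strand lies inside exactly one new pipe region and the crossing pattern of two such pipe regions is a grid, exactly as required by the definition of respecting a pipe system. No further moves are needed for this: the regions are chosen to follow $\gamma$, not the other way around.

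Next I would verify that $\tilde\gamma$ respects $\tilde\Pi$, checking Properties~(1)--(5) one at a time. Property~(1) is immediate since the union of the new cluster and pipe regions contains all the straightened strands by construction. Property~(2) holds because straight segments in a convex disk are simple and pairwise cross at most once, and this property is inherited by the pieces that land in each new region; the clusters $[uv]$ and the surviving pipes $[uv]v$ inherit simplicity and the single-crossing condition from the fact that $\gamma$ already respected $\Pi$ outside $D_u$ and from Lemma~\ref{L:steinitz-ringel}. Properties~(3) and~(4) follow from transversality of $\gamma$ with the (infinitesimal) ends and from the fact that a straight strand meets each end of $D_u$ at most once, so it cannot revisit a new end consecutively. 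Property~(5) holds because the new pipe regions $[uv][uw]$ are arbitrarily thin and, by the placement of the ends, no three of them overlap, and two of them meet in a grid pattern in which two strands of the same region never cross. I would also note Property~(6) is automatic here since the new pipes $[uv][uw]$ carry at most one strand each (or, if several, parallel ones that do not cross).

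For the second part, the key observation is that $\Pi$ and $\tilde\Pi$, viewed as topological subspaces of $\Sigma$, are \emph{the same space up to an ambient isotopy that fixes everything outside a neighborhood of $D_u$}: replacing the disk $D_u$ by the union of the ellipses $D_{[uv]}$ and the connecting strips $R_{[uv][uw]}$ is just a deformation retract followed by re-thickening, and this does not change the homotopy type of curves drawn inside. More precisely, the straightening homotopy inside $D_u$ carries $\gamma$ to $\tilde\gamma$ through curves that stay inside $\Pi$, and $\tilde\gamma$ lies inside $\tilde\Pi \subseteq \Pi$; conversely any curve inside $\tilde\Pi$ is a curve inside $\Pi$. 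Hence a tightening $\tilde\gamma_*$ of $\tilde\gamma$ within $\tilde\Pi$ has the minimum number of crossings over its homotopy class within $\tilde\Pi$, and since $\tilde\gamma$ is homotopic to $\gamma$ within $\Pi$ and $\tilde\Pi$ is homotopy-equivalent to $\Pi$ rel the boundary behaviour of the strands, $\tilde\gamma_*$ is also homotopic to $\gamma$ within $\Pi$ and realizes the same minimum; therefore $\tilde\gamma_*$ is a tightening of $\gamma$ within $\Pi$.

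The main obstacle I anticipate is making the ``$\tilde\Pi$ and $\Pi$ are the same up to isotopy'' claim genuinely rigorous rather than merely plausible: one has to be careful that the deformation retract from $D_u$ onto the skeleton-like union of ellipses and strips is compatible with the positions of the ends (so that a strand entering an end $A_{u,v}$ before expansion enters the corresponding end of $[uv]$ after expansion), and that no crossings are created or destroyed except the ones mandated by the cyclic order of ends. This is exactly where the choice of Euclidean metric on $D_u$ and the ``no triple intersection'' genericity assumption do the work, so I would state that genericity explicitly and argue that straightening can only decrease or preserve the number of crossings inside $D_u$, which is both what gives monotonicity of the homotopy moves and what guarantees the tightening problem is unaffected.
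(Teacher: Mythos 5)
The first part of your proposal (straighten the tangle in $D_u$ via Lemma~\ref{L:steinitz-ringel} in $O(n^2)$ moves, then check Properties~(1)--(5)) matches the paper's approach, and your verification of the respect properties is in the right spirit, if slightly less precise on Property~(4) where the paper explicitly separates the two types of new ends.

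The second part, however, has a genuine gap. You claim that $\Pi$ and $\tilde\Pi$ are ``the same space up to an ambient isotopy'' and that replacing $D_u$ by the union of ellipses and strips ``does not change the homotopy type of curves drawn inside.'' This is false in general. The new structure inside $D_u$ (ellipses around ends joined by thin strips) can fail to be simply connected: if the straight strands connect the ends in a cyclic pattern, the union of the corresponding strips encloses a region of $D_u$ that is \emph{not} part of $\tilde\Pi$. So $\tilde\Pi$ is obtained from $\Pi$ by \emph{adding punctures}, not by isotopy, and the two are not homotopy equivalent. Consequently it is not automatic that the minimum crossing number in the homotopy class within $\tilde\Pi$ equals the minimum within $\Pi$; punctures can only make tightening harder, and the entire content of the second claim is that they do not.

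The paper closes this gap with an argument you do not reproduce: one first takes a tightening $\gamma_*$ of $\gamma$ within the \emph{larger} space $\Pi$, shows (via the proof of Lemma~\ref{L:respect-pipes} together with Corollary~\ref{C:empty-trigon}) that $\gamma_*$ can be made to respect $\Pi$, and uses the combinatorial fact that two spur-free closed walks on a graph are homotopic if and only if they are identical to conclude $C_\Pi(\gamma_*) = C_\Pi(\gamma)$. After straightening $\gamma_*$ inside $D_u$, it then lies in $\tilde\Pi$, its walk in $\tilde\Pi$ agrees with that of $\tilde\gamma$, and it is tight in $\tilde\Pi$ because it was already tight in the larger $\Pi$. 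This chain is what actually proves that a tightening within $\tilde\Pi$ achieves the optimum for $\Pi$; your ``isotopy'' shortcut skips exactly the step that makes the lemma non-trivial. Your final paragraph flags the right place as the obstacle, but the remedy you sketch (genericity, crossing-monotonicity of straightening) does not address the added-puncture issue.
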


\begin{proof}
Conceptually, this can be implemented by straightening the tangle defined by the cluster region using Lemma~\ref{L:steinitz-ringel} in $O(n^2)$ moves, as the number of crossings in $\gamma$ is upper bounded by $n$ at any point of the algorithm because the homotopy process is monotone.
%Now, there might be crossings between strands within the new pipes.
%, and because of the transverse crossings with pipes $R_{xy}$ (see Figure~\ref{F:pipe-expansion}), it might not be possible to move these to an adjacent cluster using just an isotopy (this is essentially the only difference between this proof and the proof of Lemma~\ref{L:cluster-expansion}).
%But since the pipe regions can be chosen arbitrarily thin, generically if two pipes $R$ and $R'$ cross transversely, the strands of $R$ do not cross within the intersection region $R \cap R'$.
%We can apply Lemma~\ref{L:sliding} to push all the crossings between strands of the new pipes to adjacent clusters using $O(n^2)$ moves.

% \item $\gamma$ lies completely in $\Pi$,
% \item all strands in any cluster or pipe region are simple,
% \item each connected component of the intersection between $\gamma$ and any end of $\Pi$ is a single point,
% \item no intersections of $\gamma$ are between strands of the same pipe, and
% \item $\gamma$ never intersects the same end consecutively more than once.
Next we prove that the modified multicurve $\tilde\gamma$ respects the modified pipe system $\tilde\Pi$, by showing Properties~(1)--(5) (and thus also (6)).
Let the cluster expansion be performed on cluster $u$.
Properties~(1)--(3) immediately follow from the new strands being straight lines and the disk being convex.
%The crossings between strands in a pipe disallowed Property~(5) have been removed using an isotopy.%from the algorithm \note{refer to new lemma}.
For Property~(4), consider ends of two different types: ends of the form $A_{[uv],v}$ and ends of the form $A_{[uv],[uw]}$, where $v$ and $w$ are clusters adjacent to $u$ in $G$.
For ends of the first type, if $\gamma$ intersects $A_{[uv],v}$ consecutively twice, the subpath of $\gamma$ between the two intersections must lie inside $D_u$, and therefore must be a strand in $D_u$.  This implies that $\gamma$ did not respect $\Pi$ as Property~(4) was already violated, a contradiction.
For ends of the second type, as we took the new clusters to be elliptical neighborhoods of the ends, and the new strands are all straight lines between the ends, each such strand can cross the boundary of each cluster of $\tilde\Pi$ at most once.
Property~(5) follows from the fact that the pipe regions can be made infinitesimally thin, and thus generically there are no crossings within the intersection of two of those.
%Property~(6) follows from applying Lemma~\ref{L:sliding} to push all the crossings between strands of the new pipes to adjacent clusters using $O(n^2)$ moves.

The second item of the lemma is a consequence of the facts that the cluster expansion performed on two multicurves with identical closed walks in the skeleton graph before expansion creates two new multicurves with identical closed walks in the new skeleton graph after expansion, and that homotopy between two multicurves within the pipe system is equivalent to the \emph{equality} between two corresponding closed walks in the skeleton graph.

More in details.
Since, as a topological space, $\tilde\Pi$ is obtained from $\Pi$ by adding punctures, any tightening $\tilde\gamma_*$ of $\tilde\gamma$ within $\tilde\Pi$ is also homotopic to $\tilde\gamma$ (and thus to $\gamma$) within $\Pi$.
So it suffices to prove that such a $\tilde\gamma_*$ is tight within $\Pi$.
In order to do so, let $\gamma_*$ denote a tightening of $\gamma$ within $\Pi$; we prove that $\gamma_*$ and $\tilde\gamma_*$ have equally many self-crossings.

First we claim that $\gamma_*$ can be made to respect the pipe system $\Pi$ using monotonic homotopy moves (no matter how many).
The proof follows closely the one of Lemma~\ref{L:respect-pipes}.
As $\gamma_*$ is tight, it contains no embedded monogons or bigons.
We ensure that it does not form any bigon with any end of $\Pi$ by undoing such bigons, starting from the innermost ones. As in that proof, $\gamma_*$ now satisfies Properties (1)--(4), but it may fail to satisfy Property~(5).
Within the intersection of two pipe regions, if two strands of $\gamma_*$ from the same pipe region cross, they define a trigon with a boundary of the region-intersection.
By applying Corollary~\ref{C:empty-trigon}, we can make this trigon empty using monotonic homotopy moves, and then move the crossing outside of the intersection.
This operation does not break Properties (1)--(4). Repeating this as many times as needed, $\gamma_*$ satisfies Property~(5) and thus respects the pipe system~$\Pi$.
%Then, by applying Lemma~\ref{L:curves-respect-pipes}, up to applying monotonic homotopy moves (no matter how many), we can assume that $\gamma_*$ respects $\Pi$. %\note{Is this kosher?}

% \begin{TODO}
% Deal with in the final camera-ready version: (5) should be able to enforced using trigon empty lemma.
% But that would increase the number of moves required, unless we state the bounds for $\Pi_0$ and general $\Pi$ differently.
% \end{TODO}

Then we claim that the closed walk $C_\Pi(\gamma_*)$ is identical to $C_\Pi(\gamma)$.
Indeed, by Property~(4), none of these closed walks contains spurs, and $\gamma_*$ and $\gamma$ are homotopic by definition.
Since $\Pi$ retracts (as a topological space) into its skeleton graph $G$, the claim follows from the fact that two closed walks without spurs in a graph are homotopic if and only if they are identical (up to cyclic permutation).

We straighten the tangle induced by $\gamma_*$ within the cluster region using monotonic moves.
Now, $\gamma_*$ can be considered as a multicurve in the new pipe system $\tilde\Pi$, and furthermore, the closed walk $C_{\tilde\Pi}(\gamma_*)$ induced by $\gamma_*$ in $\tilde\Pi$ is identical to $C_{\tilde\Pi}(\tilde\gamma)$.
(Indeed, in general, for any curve $\alpha \subseteq \tilde\Pi \subseteq \Pi$, the closed walk $C_{\tilde\Pi}(\alpha)$ is simply obtained from $C_{\Pi}(\alpha)$ by replacing subwords $wuv$ with subwords $w[wu][uv]v$.
Because $C_\Pi(\gamma_*)=C_\Pi(\gamma)$ as shown above, one has $C_{\tilde\Pi}(\gamma_*)=C_{\tilde\Pi}(\tilde\gamma)$.)
From that we conclude that $\gamma_*$ and $\tilde\gamma$ are homotopic within $\tilde\Pi$.

%But topologically, $\tilde\Pi$ is obtained from $\Pi$ by adding boundaries, and therefore
Any curve in $\tilde\Pi$ that is tight in $\Pi$ is also tight in $\tilde\Pi$.
It follows that $\gamma_*$ is tight within $\tilde\Pi$, and is therefore a tightening of $\tilde\gamma$ within $\tilde\Pi$. Thus it has exactly as many self-crossings as any tightening $\tilde\gamma_*$ of $\tilde\gamma$ within $\tilde\Pi$.
This concludes the proof.
\end{proof}

\begin{figure*}[t!]
  \centering
  % \def\svgwidth{\textwidth}
  % \scriptsize
  % \input{./Fig/PipeExpansion.pdf_tex}
  \includegraphics[width=0.9\textwidth]{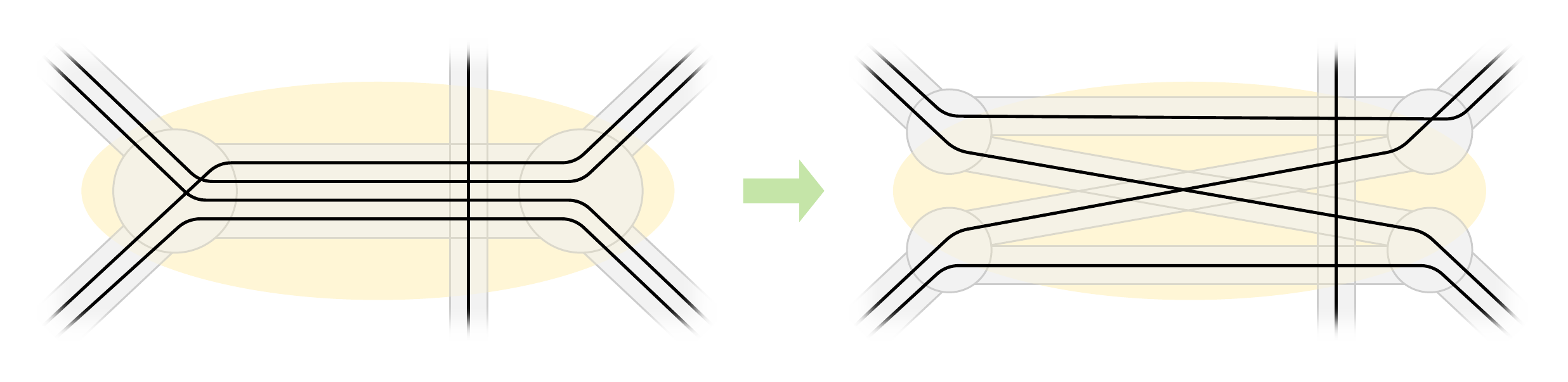}
  \caption{Expanding a pipe $R_{uv}$. The infinitesimal ends have been widened for visibility.}
  \label{F:pipe-expansion}
\end{figure*}

\paragraph{Pipe expansion.}
We perform the following \EMPH{pipe expansion} on a pipe $uv$.
%\note{Do we need the safe assumption? Yes, so that every strand (as subwalk) in pipe expansion is not longer; in other words, every strand has 2 vertices to start with.}
%
%\paragraph{Safe and useful arcs.}
For sake of analysis, we want to make sure that the pipe expansions we performed actually improve the quality of the multicurve in a pipe system.
This motivates the following definitions~\cite{cbpp-ecpg-09,weak-arxiv,ft-cmpd-18}.
Cluster $u$ is a \EMPH{base} of pipe $uv$ if every vertex in $C(u)$ is incident to some edge in $C(uv)$.
A pipe $uv$ is \EMPH{safe} if $u$ and $v$ are distinct and both $u$ and $v$ are bases of $uv$.
A pipe $uv$ in $G$ is \EMPH{useless} if both clusters incident to $uv$ have degree 2 in $G$;
otherwise the pipe $uv$ is \EMPH{useful}.
%\note{later with potential $\Phi$?}
We will only perform pipe expansion on \emph{safe} and \emph{useful} pipes throughout the algorithm.

Let $\Delta$ be a topological ellipse containing the cluster regions $D_u$ and $D_v$ together with the pipe region $R_{uv}$, as well as portions of the pipe regions crossing $R_{uv}$ (see Figure~\ref{F:pipe-expansion}).
By taking $\Delta$ close enough to the region $R_{uv}$, we can assume $\Delta$ contains no intersections between any two pipes intersecting $R_{uv}$.
Because pipe $uv$ is safe, the two cluster regions are distinct.
We endow $\Delta$ with Euclidean metric such that all the ends are infinitesimally small, and position these ends so that there are no triple intersecting strands of $D_u$.
%such that all the intersections of pipe regions with the boundary of this disk are infinitesimally small, and there are no triple intersections between strands connecting these infinitesimal boundaries.

Then we modify $\gamma$ by replacing every strand of $\Delta$ with a straight line.
We modify $G$ and $\Pi$ accordingly, see Figure~\ref{F:pipe-expansion}:
For each pipe $uw$ incident to $u$ other than $uv$, create a new cluster $[uw]$, whose corresponding cluster region in $\Pi$ is a neighborhood of the intersection between pipe region $R_{uw}$ and the boundary of the disk $\Delta$.
Rename the pipe $uw$ into $[uw]w$.
Similarly for each pipe $vw$ incident to $v$, create a new cluster $[vw]$, whose corresponding cluster region in $\Pi$ is a neighborhood of the end $A_{v,w}$, and rename the pipe $vw$ into $[vw]w$.
Because the pipe $uv$ is safe, both $u$ and $v$ are bases of $uv$; and $G$ has no spurs as $\gamma$ respects $G$.
Therefore every strand of $R_{uv}$ must connect an end of $D_u$ to an end of $D_v$.
For every pair of pipes $uw$ and $vw'$, insert a pipe $[uw][vw']$ if there was a strand of $\Delta$ that connects end $A_{u,w}$ with end $A_{v,w'}$; insert a corresponding rectangular pipe region in $\Pi$ accordingly, so that in the drawing of $G$ and $\Pi$ the pipes intersect transversely.
By the choice of Euclidean metric on $\Delta$, these pipes can be taken to be arbitrarily thin rectangles such that no three of them overlap at a point.
%The drawings of $G$ and $\Pi$ are defined naturally with respect to the drawing of the original $G$ and $\Pi$.
%
Finally, remove clusters $u$ and $v$ from $G$ and regions $D_u$ and $D_v$ from $\Pi$.

\begin{lemma}
\label{L:pipe-expansion}
First, the pipe expansion can be implemented using $O(n^2)$ monotonic homotopy moves, such that after the pipe expansion the new multicurve $\tilde\gamma$ still respects the modified pipe system $\tilde\Pi$.
Second, if we denote by $\tilde\gamma_*$ a tightening of $\tilde\gamma$ within the modified pipe system $\tilde\Pi$ (viewed as a topological space), then $\tilde\gamma_*$ is also a tightening of $\gamma$ within $\Pi$.
\end{lemma}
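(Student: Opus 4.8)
The plan is to follow the proof of Lemma~\ref{L:cluster-expansion} almost verbatim, since the pipe expansion is essentially a cluster expansion applied to the enlarged disk $\Delta$ together with a bookkeeping change in the skeleton graph around $u$ and $v$. For the first part, I would implement the pipe expansion by straightening the tangle induced by $\gamma$ inside $\Delta$ using Lemma~\ref{L:steinitz-ringel}; because the whole homotopy process is monotone, $\gamma$ has at most $n$ crossings at every step, so straightening costs $O(n^2)$ monotonic homotopy moves. It then remains to verify that $\tilde\gamma$ satisfies Properties~(1)--(5) (hence also~(6)) inside $\tilde\Pi$. Properties~(1)--(3) are immediate because the new strands are straight segments inside the convex disk $\Delta$. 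For Property~(4) I would distinguish ends of the form $A_{[uw],w}$ or $A_{[vw'],w'}$ from ends of the form $A_{[uw],[vw']}$ between two new clusters: a consecutive double crossing of the first kind would, in the picture before expansion, force a subpath of $\gamma$ to enter and leave one of the old regions $D_u$, $D_v$, $R_{uv}$, or an incident pipe region through a single end, contradicting that $\gamma$ already respected $\Pi$; a consecutive double crossing of the second kind is impossible because the new clusters are arbitrarily small neighborhoods and the new strands are straight lines, so each strand meets the boundary of each new cluster at most once. Property~(5) holds since the new pipe regions can be taken infinitesimally thin, so generically no crossing lies inside the intersection of two of them.

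For the second part I would reproduce the structure of the corresponding argument in Lemma~\ref{L:cluster-expansion}. As a topological space, $\tilde\Pi$ is obtained from $\Pi$ by deleting the open disks $D_u$ and $D_v$, that is, by adding punctures; hence any tightening $\tilde\gamma_*$ of $\tilde\gamma$ within $\tilde\Pi$ is homotopic to $\tilde\gamma$, and therefore to $\gamma$, within $\Pi$, and it suffices to show $\tilde\gamma_*$ is tight within $\Pi$. Let $\gamma_*$ be any tightening of $\gamma$ within $\Pi$; I will show $\gamma_*$ and $\tilde\gamma_*$ have equally many self-crossings. Exactly as in the proofs of Lemma~\ref{L:respect-pipes} and Lemma~\ref{L:cluster-expansion}, $\gamma_*$ can be made to respect $\Pi$ using monotonic homotopy moves: it has no embedded monogons or bigons because it is tight, bigons with the ends of $\Pi$ are undone starting from innermost ones, and each crossing of two strands of a common pipe region lying inside a pipe-region intersection is pushed out after emptying, via Corollary~\ref{C:empty-trigon}, the trigon it forms with a boundary arc of that intersection. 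Since $\gamma_*$ is homotopic to $\gamma$, their closed walks are spur-free (Property~(4)) and $\Pi$ retracts onto its skeleton $G$, we get $C_\Pi(\gamma_*) = C_\Pi(\gamma)$.

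I would then straighten the tangle induced by $\gamma_*$ inside $\Delta$ by monotonic moves and view the result as a multicurve in $\tilde\Pi$. The crucial --- and I expect most delicate --- step is to establish $C_{\tilde\Pi}(\gamma_*) = C_{\tilde\Pi}(\tilde\gamma)$. For this I would prove the general substitution rule that for any curve $\alpha \subseteq \tilde\Pi$ the walk $C_{\tilde\Pi}(\alpha)$ is obtained from $C_\Pi(\alpha)$ by replacing every subword $w\,u\,v\,w'$ with $w\,[uw]\,[vw']\,w'$ (and the reverse subword symmetrically). This is where safeness of the pipe $uv$ is used: because $u$ and $v$ are distinct and each is a base of $uv$, every visit of a closed walk to $u$ is adjacent to a use of pipe $uv$, and since closed walks have no spurs the visits to $u$ and to $v$ must occur as isolated consecutive pairs $u,v$ (or $v,u$) flanked by other clusters, so each such pair corresponds to exactly one strand of $R_{uv}$ and the substitution is well-defined. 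Applying this rule to the equality $C_\Pi(\gamma_*) = C_\Pi(\gamma)$ gives $C_{\tilde\Pi}(\gamma_*) = C_{\tilde\Pi}(\tilde\gamma)$; since two spur-free closed walks in a graph are homotopic if and only if they are equal up to cyclic rotation, $\gamma_*$ and $\tilde\gamma$ are homotopic within $\tilde\Pi$. Finally, a curve contained in $\tilde\Pi$ that is tight within $\Pi$ is also tight within $\tilde\Pi$, so $\gamma_*$ is a tightening of $\tilde\gamma$ within $\tilde\Pi$ and hence has exactly as many self-crossings as $\tilde\gamma_*$, which completes the argument.
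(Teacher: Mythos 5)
Your proposal is correct and follows essentially the same route as the paper, which itself notes that the proof of Lemma~\ref{L:pipe-expansion} is virtually identical to that of Lemma~\ref{L:cluster-expansion}: straighten the tangle in $\Delta$ via Lemma~\ref{L:steinitz-ringel}, verify Properties~(1)--(5), make $\gamma_*$ respect $\Pi$, pass to spur-free closed walks, apply the substitution rule $w\,u\,v\,w' \mapsto w\,[uw]\,[vw']\,w'$, and conclude via the retraction of $\tilde\Pi$ onto its skeleton. The only notable difference is that you spell out why safeness of $uv$ makes that substitution rule well-defined, a point the paper leaves implicit; this is a welcome elaboration rather than a deviation.
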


%\begin{TODO}
%\begin{itemize}\cramped
%\item Separate into two lemmas.
%First lemma is proved separately for clusters and pipes, and the second lemma can be shared.
%(Define general expansion?)
%\item Define everything with respect to the closed walk?  So that respect follows.
%But then the curve is degenerate; we actually care about how the curve is perturbed and need to bound the number of homotopy moves.
%\end{itemize}
%\end{TODO}

\ifsoda
The proof is virtually identical to the proof of Lemma~\ref{L:cluster-expansion}; see full version~\cite{fullversion} for a complete proof.
\else
The proof is virtually identical to the proof of Lemma~\ref{L:cluster-expansion}; we repeat it here to be comprehensive.

\begin{proof}
Let $uv$ be the useful pipe that we perform pipe expansion on.
Topologically, this can be implemented by straightening the tangle inside the disk $\Delta$ using Lemma~\ref{L:steinitz-ringel}, which can be done in $O(n^2)$ moves, as any crossing in the tangle is also a crossing of $\gamma$, which is upper bounded by $n$ at any point of the algorithm because the homotopy process is monotone.
Next we prove that the modified multicurve $\tilde\gamma$ respects the modified pipe system $\tilde\Pi$, by showing Properties (1)--(5) (and thus also (6)).
Properties (1)--(3) immediately follow from the new strands being straight lines and the disk being convex.
As for Property~(4), consider ends of two different types: ends of the form $A_{[uv],v}$ and ends of the form $A_{[uv],[vw']}$, where $w$ and $w'$ are clusters incident respectively to $u$ and $v$ in $G$.
For ends of the first type, if $\gamma$ intersects $A_{[uv],v}$ consecutively twice, the subpath of $\gamma$ between the two intersections must lie inside $D_u$, and therefore must be a strand in $D_u$.  This implies that $\gamma$ did not respect $\Pi$ as Property~(4) was violated, a contradiction.
For ends of the second type, as the new strands are all straight lines between the ends, and the cluster regions have been taken to be elliptical neighborhoods of the ends, each strand can cross the boundary of each cluster at most once.
Property~(5) follows from the fact that the pipe regions are infinitesimally thin, and thus generically there are no crossings within the intersection of two of those.
%Property~(6) follows from applying Lemma~\ref{L:sliding} to slide all the crossings into adjacent clusters using $O(n^2)$ moves.

For the second item, let $\gamma_*$ denote a tightening of $\gamma$ within $\Pi$.
We claim that $\gamma_*$ can be made to respect the pipe system $\Pi$ using monotonic homotopy moves (no matter how many).
The proof follows closely the one of Lemma~\ref{L:respect-pipes}. As $\gamma_*$ is tight, it contains no embedded monogons or bigons. We ensure that it does not form any bigon with any end of $\Pi$ by undoing such bigons, starting from the innermost ones. As in that proof, $\gamma_*$ now satisfies Properties (1)--(4), but it may fail to satisfy Property~(5).
Within the intersection of two pipe regions, if two strands of $\gamma_*$ from the same pipe region cross, they define a trigon with a boundary of this intersection. By applying Corollary~\ref{C:empty-trigon}, we can make this trigon empty using monotonic homotopy moves, and then move the crossing outside of the intersection.
This operation does not break Properties (1)--(4). Repeating this as many times as needed, $\gamma_*$ satisfies Property~(5) and thus respects the pipe system $\Pi$.

We claim that the closed walk $C_\Pi(\gamma_*)$ that it induces there is identical to $C_\Pi(\gamma)$. Indeed, by Property~(4) of the pipe system, none of these closed walks contains spurs, and $\gamma_*$ and $\gamma$ are homotopic by definition.
Since $\Pi$ retracts (as a topological space) into its skeleton graph $G$, the claim follows from the fact that two closed walks without spurs in a graph are homotopic if and only if they are identical (up to cyclic permutation).

We straighten the tangles of $\gamma_*$ within the cluster region using monotonic moves.
Now, $\gamma_*$ can be considered as a curve of the new pipe system $\tilde\Pi$, and furthermore, the closed walk $C_{\tilde\Pi}(\gamma_*)$ induced by $\gamma_*$ in $\tilde\Pi$ is identical to $C_{\tilde\Pi}(\tilde\gamma)$.
(Indeed, in general, for any curve $\alpha \subseteq \tilde\Pi \subseteq \Pi$, the closed walk $C_{\tilde\Pi}(\alpha)$ is simply obtained from $C_{\Pi}(\alpha)$ by replacing subwords $wuvw'$ with the subwords $w[wu][vw']w'$.
Because $C_\Pi(\gamma_*)=C_\Pi(\gamma)$ as shown above, this implies $C_{\tilde\Pi}(\gamma_*)=C_{\tilde\Pi}(\tilde\gamma)$.)
From that we conclude that $\gamma_*$ and $\tilde\gamma$ are homotopic within $\tilde\Pi$.

Any curve in $\tilde\Pi$ that is tight in $\Pi$ is also tight in $\tilde\Pi$.
It follows that $\gamma_*$ is tight within $\tilde\Pi$, and is therefore a tightening of $\tilde\gamma$ within $\tilde\Pi$. Thus it has exactly as many self-crossings as any tightening $\tilde\gamma_*$ of $\tilde\gamma$ within $\tilde\Pi$.
This concludes the proof.
% In conclusion, when viewed as topological surfaces, $\tilde\Pi$ is a subset of $\Pi$, therefore tightening a curve of $\tilde\Pi$ within $\Pi$ also tightens it within $\tilde\Pi$.
% It follows that $\gamma_*$ is a tightening of $\gamma$ within $\tilde\Pi$ and thus has exactly as many self-crossings as $\tilde\gamma_*$.
% Since by definition, $\tilde\gamma_*$ is homotopic to $\gamma$ in $\tilde\Pi$, it is also homotopic to $\gamma$ in $\Pi$.
% Since it has exactly as many self-crossings as $\gamma_*$, which is a tightening of $\gamma$ in $\Pi$, it is also a tightening of $\gamma$ in $\Pi$, which concludes the proof.
\end{proof}

\fi

\subsection{Main algorithm}
\label{SS:expansion-algorithm}

At the beginning of the algorithm, perform a cluster expansion on the unique cluster in the initial pipe system $\Pi_0$.
Note that after this first expansion, the two incident clusters of any pipe are distinct.
The algorithm repeatedly performs pipe expansion on an arbitrary safe and useful pipe,
%(followed by moving crossings in pipes to neighboring clusters),
until no such pipe remains.
Observe that after a cluster or pipe expansion, any newly created cluster is a base of an incident pipe.
Thus throughout the algorithm we maintain the invariant that every cluster is a base of some pipe; in particular, this implies that there is always a safe pipe after the first cluster expansion~\cite[Property~5]{cbpp-ecpg-09}~\cite[\S5.1]{weak-arxiv}~\cite[Lemma~5]{ft-cmpd-18}.

\ifsoda
\begin{lemma}[Chang \etal~{\cite[Lemma~5.3]{weak-arxiv}}]
\else
\begin{lemma}[Chang \etal~{\cite[Lemma~5.3]{weak-arxiv}}, Fulek and T\'oth~{\cite[Lemma~5]{ft-cmpd-18}}]
\fi
\label{L:no-useful}
If every cluster in the pipe system of $G$ is a base of an incident pipe, but $G$ has no useful pipes, then $G$ must be a disjoint union of cycles.
\end{lemma}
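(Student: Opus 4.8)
The plan is to prove the contrapositive structure directly: assuming every cluster is a base of an incident pipe and no pipe is useful, deduce that every cluster has degree exactly $2$ and the skeleton graph $G$ has no spurs, so each connected component is a simple cycle. First I would unpack the definition of ``useless'': a pipe $uv$ is useless precisely when both of its endpoints have degree $2$ in $G$. If $G$ has no useful pipe, then \emph{every} pipe is useless, which means that for every pipe $uv$, both $u$ and $v$ have degree $2$. I would first dispose of trivial cases: if $G$ has a cluster of degree $0$ it carries no strands of $\gamma$ and can be ignored (or the statement is interpreted over clusters actually used by $\gamma$), and a cluster of degree $1$ would be the endpoint of some pipe $uv$, forcing that pipe to be useful (since one endpoint has degree $\neq 2$), a contradiction. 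Hence every cluster incident to at least one pipe has degree exactly $2$.

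Next I would argue that there are no isolated vertices left to worry about and that $G$ is genuinely a disjoint union of cycles rather than, say, a union of paths or a multigraph with a ``spur'' $[u,v,u]$. Here the key input is the observation recorded just before the lemma: the closed walks $C = C_\Pi(\gamma)$ contain no spurs, by Property~(4) of respecting a pipe system. A spur in $G$ itself -- a degree-$2$ cluster $u$ whose two incident pipe-ends both lead to the same cluster $v$ via the same pipe, i.e. a cluster both of whose ends feed a single pipe forming a ``dead end'' -- would be traversed by the closed walks only as a subwalk $[\dots,v,u,v,\dots]$, which is a spur, contradicting the spur-freeness. More carefully: since every cluster used has degree exactly $2$, the graph $G$ (restricted to its used part) is a disjoint union of cycles and bi-infinite/finite paths; but $G$ is finite and every cluster has degree $2$, so there are no path components, only cycles. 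The only remaining degenerate possibility is a cycle of length $1$ (a self-loop at a degree-$2$ cluster, counting the loop twice) or length $2$ (two parallel pipes between $u$ and $v$); I would note that these are still ``cycles'' for the purposes of the statement, or rule out the length-$1$ self-loop case by observing it forces a spur in the closed walk unless the self-loop carries no strands.

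The main obstacle I anticipate is bookkeeping around the degenerate small cases -- self-loops, length-$2$ cycles, and clusters of degree $0$ or $1$ that might formally exist in $G$ but carry no strands of $\gamma$. The clean way around this is to work throughout with the ``used'' skeleton, i.e. the subgraph of $G$ consisting of clusters and pipes actually traversed by the closed walks $C_\Pi(\gamma)$ (equivalently, by $\gamma$ itself); then every cluster has degree $\ge 1$ automatically, the spur-freeness of $C$ translates into a combinatorial constraint on $G$, and the degree-$2$ conclusion forces a disjoint union of cycles. I expect the actual write-up to be short: the substance is the one-line unwinding of ``no useful pipe $\Rightarrow$ all clusters have degree $2$,'' and the slightly more delicate point that no-useful-pipes together with the base-of-incident-pipe invariant and spur-freeness leaves no room for path components or spurious self-loops. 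The rest is a citation to the corresponding statements in Cortese~\etal~\cite{cbpp-ecpg-09}, Chang~\etal~\cite{weak-arxiv}, and Fulek and T\'oth~\cite{ft-cmpd-18}, whose proofs carry over verbatim.
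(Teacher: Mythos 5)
The paper does not prove this lemma—it is cited verbatim from Chang et~al.\ and Fulek--T\'oth—so there is no in-paper argument to compare against. That said, your core argument is correct and is exactly what the cited proofs do: ``no useful pipe'' means every pipe is useless, i.e.\ every cluster incident to some pipe has degree~2; and ``every cluster is a base of an incident pipe'' forces every cluster to have at least one incident pipe, hence degree $\ge 1$, hence exactly~2; a finite multigraph in which every vertex has degree exactly~2 is a disjoint union of cycles.

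Two points where your write-up is more complicated than it needs to be. First, the spur-freeness of $C_\Pi(\gamma)$ plays no role here: the lemma is a pure statement about the skeleton multigraph $G$, and ``spur'' is a property of a walk, not of a graph. Once all degrees are~2 there is nothing left to rule out—finite path components have endpoints of degree at most~1, bi-infinite paths do not exist in a finite graph, and a self-loop (degree~2 from one edge) or a pair of parallel pipes are cycles of length~1 or~2 in the multigraph sense, which the statement happily allows. (In the algorithmic context self-loops cannot even arise, since the first cluster expansion makes all pipe endpoints distinct and every pipe expansion preserves this, but the lemma does not need that.) Second, the way you dispose of degree-0 clusters—``it carries no strands of $\gamma$ and can be ignored'' and the detour through a ``used'' subskeleton—is unnecessary and slightly off-target: a cluster with no incident pipe simply cannot be ``a base of an incident pipe,'' so the base hypothesis already excludes degree~0 outright, for all clusters and not just the used ones. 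With those two trims, the proof is the intended two-liner.
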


\begin{figure*}[t!]
\centering
    % \scriptsize
    % \def\svgwidth{0.5\textwidth}
    % \input{./_tex}
    \includegraphics[width=0.6\textwidth]{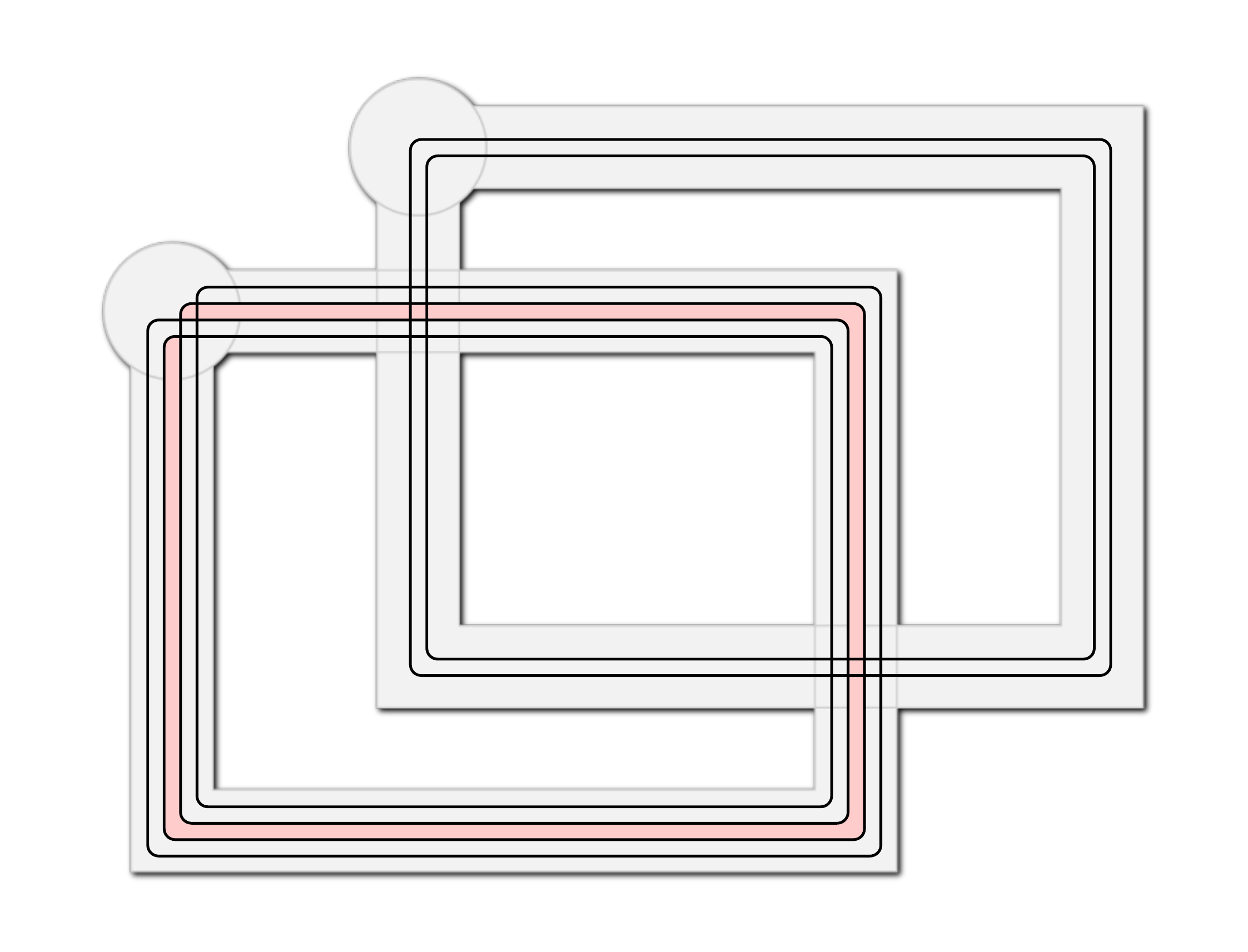}
    \caption{A multicurve $\gamma$ in a canonical form on a sphere with four boundaries.  Note that while the top-right curve is tightened, the bottom-left curve is not (an embedded bigon is labeled in pink).}
    \label{F:canonical}
\end{figure*}

%\paragraph{Canonical form.}
By Lemma~\ref{L:no-useful}, when all the useful pipes are gone, the skeleton graph must be a disjoint union of cycles.
We now further put the multicurve into \emph{canonical form}.
A multicurve $\gamma$ respecting a pipe system $\Pi$ is said to be in \EMPH{canonical form} if
(1) the skeleton graph $G$ of $\Pi$ is a disjoint union of cycles;
(2) every intersection of $\gamma$ is either between two strands of different pipes, or lying in a unique cluster~$u^*$; furthermore, the strands in cluster $u^*$ forms (the projection of) a braid. See Figure~\ref{F:canonical}.

Let $\gamma$ be a multicurve respecting a pipe system whose skeleton graph is a disjoint union of cycles.
Slide all the crossings inside clusters to some arbitrarily chosen cluster $u^*$ using multiple applications of Lemma~\ref{L:sliding}, and perform Lemma~\ref{L:steinitz-ringel} on $u^*$ to straighten its strands~\cite[Propositions~14]{gs-mcmcr-97}; this can be done using $O(n^2)$ many monotonic homotopy moves.
Now the multicurve must be in canonical form.
De Graaf and Schrijver~\cite[Propositions~8 and 14]{gs-mcmcr-97} describe an algorithm to tighten any multicurve in canonical form using quadratically many monotonic homotopy moves.
The main technical lemma~\cite[Proposition~7]{gs-mcmcr-97} can be described as a way to reduce the number of transpositions required to represent any permutation using conjugations.
%\note{sketch the algorithm in language of permutation?  not so hard.}
Intuitively, they show that as long as the multicurve is not tightened, for each constituent curve $\gamma_i$ there is always a crossing that one can slide from the top of the braid all the way along the pipes where $\gamma_i$ lies in, to the bottom of the braid and cancels out with another crossing using a $\arc20$ move, possibly after a sequence of $\arc33$ moves to adjust the position of the multicurve.

\paragraph{Analysis.}
Define the \EMPH{potential function $\Phi(C,G)$} to be the number of edges in $C$ minus the number of pipes in $G$; in notation, $\Phi(C,G) \coloneqq \abs{E(C)} - \abs{E(G)} = \sum_{uv} \Paren{\abs{C(uv)} - 1}$.

\ifsoda
\begin{lemma}[Chang \etal~{\cite[Lemma~5.4]{weak-arxiv}}]
\label{L:potential}
% Cortese \etal~{\cite[Lemma~6 and 7]{cbpp-ecpg-09}} \note{Different potential}
Let $\gamma$ be a multicurve respecting a pipe system $\Pi$, with corresponding closed walks $C$ on skeleton graph~$G$.
Each pipe expansion performed on a useful pipe in~$\Pi$ decreases the potential $\Phi(C,G)$ by at least one.
\end{lemma}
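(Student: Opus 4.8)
The plan is to track how the combinatorial data $(C,G)$ changes under a single pipe expansion on a safe, useful pipe $uv$, and to show that although some new pipes are created and some edges of the closed walk are relabeled, the net effect on $\Phi(C,G) = \sum_{uv}(|C(uv)|-1)$ is a strict decrease. The starting observation is the rewriting rule recorded in the proof of Lemma~\ref{L:pipe-expansion}: the closed walk $C_{\tilde\Pi}(\gamma)$ is obtained from $C_\Pi(\gamma)$ by replacing every occurrence of a subword $w\,u\,v\,w'$ by $w\,[wu]\,[vw']\,w'$ (and symmetrically for traversals in the opposite direction). So every time the walk $C$ crosses pipe $uv$, that single edge $C(uv)$ is ``split'' into a path of new edges through the freshly created clusters $[uw],[vw']$, passing along a new pipe $[uw][vw']$.

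First I would set up the bookkeeping carefully. Since the pipe $uv$ is \emph{safe}, both $u$ and $v$ are bases of $uv$, so every vertex of $C(u)$ is incident to an edge of $C(uv)$ and likewise for $v$; this guarantees that every strand of $R_{uv}$ genuinely runs between an end of $D_u$ and an end of $D_v$, hence every traversal of $u$ in $C$ is immediately preceded or followed by a traversal of $uv$, and the rewriting rule applies uniformly with no leftover edges of the form $C(u)$ or $C(v)$ surviving. I would then compute $\Phi$ on both sides. On the old side, the pipe $uv$ contributes $|C(uv)| - 1 = n(uv)-1$, and the two incident pipes $uw$, $vw'$ (for each pair that actually carries a strand) contribute $|C(uw)|-1$ and $|C(vw')|-1$. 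On the new side, pipe $[uw]w$ inherits exactly the edges that $uw$ had (relabeled), pipe $[vw']w'$ inherits those of $vw'$, so their contributions are unchanged; the only genuinely new pipes are the $[uw][vw']$ pipes, and the edges formerly counted by $C(uv)$ are redistributed among them. Writing $n([uw][vw'])$ for the number of strands of the old $R_{uv}$ joining end $A_{u,w}$ to end $A_{v,w'}$, we have $\sum n([uw][vw']) = n(uv)$, so the new pipes $[uw][vw']$ contribute $\sum (n([uw][vw'])-1) = n(uv) - (\text{number of new }[uw][vw']\text{ pipes})$.

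The decrease then comes from comparing $n(uv)-1$ (old contribution of $uv$) with $n(uv) - (\#\text{new pipes})$ (new contribution of the $[uw][vw']$'s): the change in potential is $-1 + (\#\text{new pipes})\cdot(-1) + \ldots$; more precisely $\Phi$ decreases by exactly $(\#\text{new pipes})-1$. So it suffices to show that the pipe expansion creates at least two new pipes of the form $[uw][vw']$, equivalently that the strands through $R_{uv}$ do \emph{not} all connect a single end of $D_u$ to a single end of $D_v$. Here is where \emph{usefulness} enters: since $uv$ is useful, at least one of $u,v$ — say $u$ — has degree $\ge 3$ in $G$, so $u$ has an incident pipe $uw$ with $w \ne v$; because $u$ is a base of $uv$, the vertex of $C(u)$ reached via $uw$ is also incident to $C(uv)$, and the no-spur property (Property~(4)) forces the strand to continue into $R_{uv}$ rather than back out $uw$. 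Combined with the fact that $u$ also has strands entering from $v$'s side, I would argue that at least two distinct ends of $D_u$ feed into $R_{uv}$, hence at least two distinct new clusters $[uw_1],[uw_2]$ arise, and correspondingly at least two new pipes. That yields $\Phi(\tilde C,\tilde G) \le \Phi(C,G) - 1$.

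The main obstacle I anticipate is the careful edge-counting across the relabeling — in particular making sure no edges are double-counted or lost when a subword $wuvw'$ is rewritten, and handling the degenerate-looking cases (a cluster of degree exactly $2$ adjacent to $uv$, or walks that traverse $uv$ multiple times, or traverse $u$ without immediately using $uv$, which safety is precisely designed to rule out). A secondary subtlety is orientation: traversals of $uv$ in either direction must be treated symmetrically, and one must check that ``number of new pipes $\ge 2$'' is genuinely forced rather than merely generic. Once the accounting is pinned down and the useful-plus-safe hypotheses are used exactly as above, the bound $\Phi$ drops by at least one falls out immediately, and citing \cite[Lemma~5.4]{weak-arxiv} confirms the same count appears in the weak-embedding setting.
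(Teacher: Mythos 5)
The paper does not prove this lemma: it is imported verbatim from Chang~et~al.\ \cite[Lemma~5.4]{weak-arxiv} and Fulek--T\'oth~\cite[Lemma~4]{ft-cmpd-18}, so there is no in-paper argument to compare against. Judged on its own, your reconstruction is essentially correct. The bookkeeping is right: since a pipe expansion rewrites each subword $wuvw'$ to $w[wu][vw']w'$, the number of edges of $C$ is unchanged, the relabelled pipes $[uw]w$ and $[vw']w'$ keep their weights, the old pipe $uv$ (contribution $n(uv)-1$) disappears, and the edges it carried are redistributed over $k$ new pipes $[uw][vw']$ contributing $n(uv)-k$ in total; so $\Phi$ drops by exactly $k-1$. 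Reducing the lemma to showing $k\ge 2$ is the right move.

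Two things in the last step should be tightened. First, you need the premise (which you never state) that \emph{every pipe of $G$ carries at least one strand}; this holds by construction, since both cluster and pipe expansion only insert a pipe $[uw][vw']$ when a strand realizes it. Without this, a degree-$3$ cluster could have a weight-zero incident pipe and the count could fail. Second, your derivation of $k\ge 2$ is muddled: you exhibit one extra pipe $uw$ from $\deg(u)\ge 3$, and then appeal vaguely to ``strands entering from $v$'s side,'' which does not by itself produce a \emph{second} new cluster $[uw_2]$ (those strands might all exit via $A_{u,w}$). The clean argument is: $\deg(u)\ge 3$ gives at least \emph{two} pipes $uw_1,uw_2$ at $u$ other than $uv$ (modulo self-loops/multi-edges, which need separate treatment); each is traversed, so each sends a strand into $D_u$, which by the base condition and the no-spur Property~(4) must continue across $R_{uv}$; hence at least two distinct clusters $[uw_1],[uw_2]$ are created, and the two induced pipes $[uw_1][v\cdot]$ and $[uw_2][v\cdot]$ are distinct, so $k\ge 2$. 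You should also flag that the degenerate configurations (a self-loop at $u$, or $uv$ a multi-edge) require a separate check; in the algorithm of Section~\ref{SS:expansion-algorithm} these do not actually arise after the initial cluster expansion, but the lemma as stated does not obviously exclude them.
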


\else
\begin{lemma}[Chang \etal~{\cite[Lemma~5.4]{weak-arxiv}}, Fulek and T\'oth~{\cite[Lemma~4]{ft-cmpd-18}}]
\label{L:potential}
% Cortese \etal~{\cite[Lemma~6 and 7]{cbpp-ecpg-09}} \note{Different potential}
Let $\gamma$ be a multicurve respecting a pipe system $\Pi$, with corresponding closed walks $C$ on skeleton graph $G$.
Each pipe expansion performed on a useful pipe in $\Pi$ decreases the potential $\Phi(C,G)$ by at least one.
\end{lemma}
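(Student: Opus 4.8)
The plan is to track separately the total length $|E(C)|$ of the closed walks and the number $|E(G)|$ of pipes, using $\Phi(C,G)=|E(C)|-|E(G)|$. Throughout I may assume that every pipe of $G$ carries at least one strand of $\gamma$: an unused pipe, together with its region, can be deleted without changing $\gamma$ or any tightening of it, and this property is preserved by every expansion, since new pipes are created only where a strand runs.

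\textbf{From safety to the potential drop.} Because $uv$ is safe, both $u$ and $v$ are bases of $uv$, so every occurrence of $u$ (resp.\ $v$) in $C$ is adjacent to a traversal of $uv$; as $C$ has no spurs, it is adjacent to exactly one such traversal. Pairing each $uv$-traversal with its two endpoints, $C$ visits each of $u$ and $v$ exactly $n(uv)$ times, and the part of $C$ around $uv$ decomposes into $n(uv)$ pairwise-disjoint \emph{transits}, each of the form $a,u,v,b$ with $a,b\notin\{u,v\}$; equivalently, $\gamma$ has exactly $n(uv)$ strands inside the disk $\Delta$, each running straight through $D_u$, across $R_{uv}$, and through $D_v$, joining an end $A_{u,a}$ to an end $A_{v,b}$. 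Now I apply the subword description of the expansion: each transit $a,u,v,b$ becomes $a,[ua],[vb],b$, which has the same number of edges, and the rest of $C$ is untouched, so $|E(\tilde C)|=|E(C)|$. On the pipe side, the expansion deletes the $\deg_G(u)+\deg_G(v)-1$ pipes incident to $u$ or $v$, reinserts the $\deg_G(u)+\deg_G(v)-2$ renamed pipes of the form $[uw]w$ and $[vw']w'$, and adds one new pipe $[ua][vb]$ for each distinct pair of ends $(A_{u,a},A_{v,b})$ joined by a strand inside $\Delta$. Writing $P$ for the number of such new pipes, $|E(\tilde G)|=|E(G)|-1+P$, and hence $\Phi(\tilde C,\tilde G)=\Phi(C,G)-(P-1)$.

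\textbf{Using usefulness.} It remains to show $P\ge 2$. Since $uv$ is useful, we may assume $\deg_G(u)\ge 3$, so there are at least two pipes $e_1,e_2$ incident to $u$ other than $uv$; the expansion creates one new cluster per such pipe, so $[e_1]\ne[e_2]$. By the standing assumption, both $e_1$ and $e_2$ are used by $C$, and since $u$ is a base of $uv$, the occurrence of $u$ traversing $e_i$ lies in a transit, so some strand inside $\Delta$ leaves $D_u$ through the end carried by $e_i$; this strand contributes a new pipe incident to $[e_i]$. The two new pipes so obtained are distinct because $[e_1]\ne[e_2]$, so $P\ge 2$ and $\Phi$ drops by at least $1$, as claimed.

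The step I expect to require the most care is the pipe bookkeeping inside $\Delta$: one must check that the renamed pipes carry exactly their old multiplicities and that the $n(uv)$ transit strands are in bijection with the strands contributing to the new internal pipes, so that nothing is miscounted. This is exactly where safety is essential — it rules out any strand of $\Delta$ staying entirely inside $D_u$ or inside $D_v$, which would otherwise create extra internal pipes on a single side and spoil the clean identity $\Phi(\tilde C,\tilde G)=\Phi(C,G)-(P-1)$ — together with the no-spur property; the remaining arithmetic is routine.
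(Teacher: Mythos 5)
Your proof is correct and takes the expected route: you separate $\Phi$ into the walk length $|E(C)|$ and the pipe count $|E(G)|$, use safety and the no-spur property to show the subword replacement $wuvw' \mapsto w[wu][vw']w'$ preserves $|E(C)|$, and use usefulness (plus the assumption that every pipe is used) to show the number $P$ of new internal pipes is at least $2$, so $|E(G)|$ grows by $P-1 \ge 1$. This is precisely the strategy of the cited sources (Chang et al.\ and Fulek–T\'oth), and the paper itself gives no inline proof, so there is nothing to contrast. Two small remarks. First, your standing assumption that every pipe carries a strand is not merely convenient but necessary: without it, a useful pipe $uv$ can have an unused incident pipe inflating $\deg(u)$, and the expansion then yields $P=1$ and $\Phi$ unchanged; the paper implicitly makes the same assumption when it later asserts $\Phi \ge 0$, so your care here is a genuine improvement over the statement as written. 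Second, the claim that the transits $a,u,v,b$ are pairwise disjoint with $a,b \notin \{u,v\}$ can fail if $G$ has parallel pipes between $u$ and $v$ (which the algorithm does not exclude); in that case transits can share an edge. This does not affect the conclusion $|E(\tilde C)| = |E(C)|$, since the subword replacement still exchanges a three-edge piece for a three-edge piece around each traversal of $uv$, but it would be cleaner to argue directly from the per-traversal substitution rather than from a global disjoint decomposition.
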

\fi

We now have all the tools to prove Lemma~\ref{L:tighten-strip}.

\begin{proof}[of Lemma~\ref{L:tighten-strip}]
% \note{Should we encapsulate the whole algorithm here?}
As the potential $\Phi$ is always nonnegative and the initial value is at most $O((g+b)n)$ by Lemma~\ref{L:respect-pipes}, the algorithm terminates after $O((g+b)n)$ steps by Lemma~\ref{L:potential}.
Each cluster and pipe expansion can be implemented by $O(n^2)$ many homotopy moves (Lemmas~\ref{L:cluster-expansion} and~\ref{L:pipe-expansion}).
After no useful pipe remains, by Lemma~\ref{L:no-useful} the skeleton graph must be a disjoint union of cycles.
Turning $\gamma$ into canonical form via Lemma~\ref{L:sliding}, followed by tightening $\gamma$, using the algorithm by de Graaf and Schrijver~\cite{gs-mcmcr-97}, takes $O(n^2)$ many homotopy moves.

By Lemmas~\ref{L:cluster-expansion} and~\ref{L:pipe-expansion}, a multicurve that is tight in the final pipe system is also tight in the original pipe system, which covers the entire surface.
Therefore we have tightened the multicurve $\gamma$, and this proves Lemma~\ref{L:tighten-strip}.
\end{proof}

%%%%%%%%%%%%%%%%%%%%%%%%%%%%%%%%%%%%%%%%%%%
\section{Applications}
\label{S:applications}

\subsection{Putting curves in minimal positions}
\label{SS:minimal}

In this subsection, we explain how our techniques can be leveraged to prove Theorem~\ref{Th:minimal}.
%Our techniques allow us to compute in polynomial the minimal position of a collection of closed curves.
%While the existence of a sequence of monotonic homotopy moves to do so is obtained by Theorem~\ref{Th:monotone}, we explain in this paragraph how to turn the proof into an algorithm.
Most of the steps of the proof are readily algorithmic, in particular the techniques of Section~\ref{S:tightening-boundary} have been designed with graph drawing applications in mind.
While we have described many steps using geometry, this was just for convenience and the cluster and pipe expansion operations are purely combinatorial: instead of endowing the disks with a Euclidean metric, one can just choose an (arbitrary) arrangement of pseudolines describing how the infinitesimally thin pipe regions will cross.

In order to do the expansions, we rely on Lemma~\ref{L:steinitz-ringel}.
While one could analyze the complexity of computing the right sequence of moves, let us just observe that for our application in mind---to compute a minimal position for the given multicurve---we can directly straighten the tangle within the disk.
So the complexity of straightening a tangle with $n$ crossings and $m$ strands is simply $O(n+m)$. Each cluster and pipe expansion can be realized in $O((g+b)n)$ time since there are $O((g+b)n)$ strands and $O(n)$ crossings, therefore the time complexity of the steps described in Section~\ref{S:tightening-boundary} is $O((g+b)^2n^2)$.

\medskip
The remaining algorithmic question is to carry out the straightening process described in Section~\ref{S:move-to-geodesic}, which relies heavily on hyperbolic geometry.
Hyperbolic geometric computations are becoming increasingly common (see for example Iordanov and Teillaud~\cite{iordanovteillaud}) in computational geometry, and the ones we rely on are no harder to carry out than the Euclidean ones.
As is customary in computational geometry, we work in a real RAM model. For the sake of convenience, we allow computations of hyperbolic trigonometric functions and their inverses in constant time, but note that this is not strictly required as approximating those functions works equally well.

The first step of Section~\ref{S:move-to-geodesic} is to endow the surface with a hyperbolic metric.
In terms of algorithms, this is achieved by mapping the surface to a polygon in some model of hyperbolic geometry, for example the Poincaré disk.
The vertices of the cut graph are mapped to the vertices of a regular hyperbolic polygon, and can thus be computed readily.
The edges in the Poincaré disk model are portions of circles intersecting the boundary at right angles, and can therefore be encoded by their endpoints.
Since the second step of the proof of Section~\ref{S:move-to-geodesic} is to straighten the tangle of $\gamma$ within this polygon, for an algorithm we can start directly with a straightened tangle, that is, a collection of circular arcs.

The main loop of the algorithm consists of taking a point of maximal tortuosity and straightening the tangle within a disk centered at this point.
Since, throughout the tightening process, the curve $\gamma$ stays a piecewise-geodesic, the points of maximal tortuosity are always at the vertices between two consecutive circular arcs.
We loop on all these points to find the one with largest tortuosity.
This requires length computations which can be done in constant time with inverse hyperbolic functions (note that approximating computations are also fine here, and thus adding such functions to the computation model is not strictly required).

We straighten the tangle in a disk centered at the point with maximum tortuosity, and each time such a straightening is done, the number of breaking points increases by one for each strand in the tangle.
There is a potential pitfall here: each straightening could double the number of breaking points in the multicurve, which could lead to an exponential number of circular arcs.
Furthermore, the number of intersections between the multicurve and the reduced cut graph might also increase during a straightening.
However, we can prove that they do not increase too much.
%\note{A:Hmm this is very annoying.}

\begin{lemma}
Let $\Sigma$ be endowed with the hyperbolic metric described in Section~\ref{S:move-to-geodesic} for a cut graph $X$, and let $\gamma$ be an $n$-vertex multicurve on $\Sigma$ that crosses $X$ at most $O(n)$ times such that all its segments between successive intersections with $X$ are geodesics.
After any number of straightenings of $\gamma$ within a disk or within the polygon $\Sigma \setminus X$, the number of intersections of $\gamma$ with $X$ is $O(gn)$.
\end{lemma}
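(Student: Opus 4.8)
The plan is to bound the number of intersections between $\gamma$ and the cut graph $X$ after an arbitrary sequence of straightenings, by a homological/geometric argument that does not track the combinatorial evolution step by step. The key point is that straightening is a \emph{monotonic} homotopy that only decreases (or preserves) the geometric length of $\gamma$ with respect to the metric $d_H$; indeed, whenever we replace the strands inside a disk $D(p,1/2)$ or inside the polygon $\Sigma \setminus X$ by shortest paths, the total length cannot increase. Since the initial multicurve $\gamma$ (after the $O(n^2)$ moves of Lemma~\ref{L:hyperbolic-metric}) has length $O(n\log g)$, \emph{every} subsequent configuration of $\gamma$ during the straightening process has $d_H$-length $O(n\log g)$ as well.

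Now I would translate the length bound into an intersection bound with $X$. The cut graph $X$ was built so that each edge of $X$ is a geodesic segment of length at least~$1$, and more importantly, by the trigonometric estimates in the proof of Lemma~\ref{L:hyperbolic-metric} the minimum distance a geodesic arc must travel to cross two non-incident sides of the fundamental polygon is $\Omega(1)$. Each time $\gamma$ crosses an arc of $X$ and then crosses $X$ again, the portion of $\gamma$ in between lies in a single tile and, being a piecewise geodesic, has length at least a positive constant unless the two crossings are on incident arcs --- and incident arcs meet at a degree-3 vertex, so consecutive crossings alternate only finitely before accumulating length. More precisely: group the crossings of $\gamma$ with $X$ into maximal blocks where consecutive crossings lie on pairwise-incident arcs within one tile; a constant number of crossings per tile contributes $\Omega(1)$ length (this is the same reflection argument used for the systole bound in Lemma~\ref{L:hyperbolic-metric}(2)). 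Hence the number of crossings is $O(\text{length}) = O(n\log g)$. To sharpen $O(n\log g)$ down to the claimed $O(gn)$, I would instead count crossings per \emph{edge} of $X$: there are $6g-3$ edges, and on each edge the crossings with $\gamma$, read along the edge, cannot have two consecutive crossings ``turning back'' into the same face without creating a bigon between $\gamma$ and that edge --- but straightening within a disk or within $\Sigma\setminus X$ can create such bigons only in a controlled way. So the honest route is the length bound: $O(n\log g)$ crossings total, and since $\log g = O(g)$ for $g \ge 2$ we get $O(gn)$; alternatively one argues that the number of crossings with a fixed arc is $O(n)$ (bounded by the number of strands of $\gamma$, which is monotone non-increasing) times $O(g)$ arcs.

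The main obstacle is making the ``length controls crossing number'' step airtight despite the fact that a straightening can \emph{increase} the number of crossings with $X$ (as the lemma statement itself warns). The resolution is exactly that length is what is preserved, not crossing count: a straightening within a disk $D(p,1/2)$ of radius $1/2$, which is less than half the systole, can move a strand across at most a bounded-diameter region, and a straightening within the polygon $\Sigma\setminus X$ replaces strands by geodesics whose endpoints already lie on $X$, so such a straightening \emph{cannot} increase crossings with $X$ at all. Thus only disk-straightenings can add crossings with $X$, each disk has diameter $1$, and each new crossing arc it introduces has length $\Omega(1)$ inside its tile, chargeable against the total length budget $O(n\log g)$ which never grows. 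Carefully, I would: (1) observe monotonicity of $d_H$-length under both straightening operations; (2) observe that polygon-straightenings never increase $|\gamma \cap X|$; (3) for disk-straightenings, bound new crossings by the intersection of the radius-$1/2$ disk with $X$, which is $O(1)$ arcs, and show each resulting crossing contributes $\Omega(1)$ length; (4) conclude $|\gamma\cap X| = O(\text{length}) = O(n\log g) = O(gn)$.
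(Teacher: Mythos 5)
Your approach is genuinely different from the paper's, and unfortunately it has a gap at its crux. The paper's proof avoids length arguments entirely: it extends the $6g-3$ geodesic arcs of $X$ to a family $\Delta$ of $O(g)$ \emph{closed} geodesics (by triangulating the polygon from its center), observes that $|\gamma\cap\Delta|=O(gn)$ initially, and then notes that each straightening can be viewed as a monotonic straightening of the multicurve $\gamma\cup\Delta$ in which $\Delta$ is fixed. Since monotonic straightening never increases the number of crossings between a strand of $\gamma$ and any geodesic, $|\gamma\cap\Delta|$, and hence $|\gamma\cap X|$, never increases. The reason one extends $X$ to \emph{closed} geodesics is precisely that straightening in a disk containing a degree-$3$ vertex $v$ of $X$ \emph{can} push a strand from one side of $v$ to the other and thereby increase $|\gamma\cap X|$; once the arcs are extended to closed geodesics, that failure mode disappears. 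Your plan instead tries to deduce a crossing bound from the $d_H$-length bound $O(n\log g)$, which would in fact be a stronger conclusion, but the conversion from length to crossings is where things break.

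The gap is in the claimed ``$\Omega(1)$ length per crossing.'' The systole argument in Lemma~\ref{L:hyperbolic-metric}(2) crucially uses that $\sigma$ is a \emph{global} geodesic to rule out the sequence of crossed arcs having $a_i$ incident to $a_{i+2}$. For a curve that is only piecewise geodesic, with breaks exactly at the crossings with $X$, this rules nothing out: a short geodesic chord in one tile from arc $a$ to arc $b$, with both endpoints at distance $\e$ from a common vertex $v$ of $X$, has length $\Theta(\e)$, and it can be followed by another short chord in the neighboring tile from $b$ to $c$, again near $v$, and so on. Such a configuration near $v$ accumulates arbitrarily many crossings with $X$ while contributing arbitrarily little length, so ``group the crossings into blocks ... a constant number of crossings per tile contributes $\Omega(1)$ length'' does not hold once the curve is not a genuine geodesic. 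You correctly note that polygon-straightenings do not increase $|\gamma\cap X|$, that disk-straightenings are the only culprit, and that a geodesic chord of $D(p,1/2)$ meets only $O(1)$ arcs of $X$; but you still need to rule out the accumulation of short strands near a vertex of $X$ across \emph{many} consecutive disk-straightenings, and the length budget alone does not do that. The clean way to do this is exactly the paper's $\Delta$-trick: replace $X$ by closed geodesics so the intersection count becomes a genuine monovariant.
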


\begin{proof}
When endowed with the hyperbolic metric described in Section~\ref{S:move-to-geodesic}, the edges of the cut graph are geodesic segments.
We extend these segments into a family of $O(g)$ closed geodesics, which we denote by~$\Delta$.
These extensions are closed because the hyperbolic metric has been obtained from an equilateral $O(g)$-gon, and thus extending the geodesics just amounts to triangulating the polygon by adding a vertex at the center and connecting it to every vertex by geodesics.
Since $\gamma$ is geodesic within the polygon defined by~$X$, it is made of $O(n)$ geodesic arcs, each of which crosses $\Delta$ at most $O(g)$ times, thus the number of intersections between $\gamma$ and $\Delta$ is $O(gn)$. We claim that this number of intersections does not increase after any sequence of straightenings.
Indeed, we can consider that each straightening is a straightening of $\gamma \cup \Delta$, viewed as a single multicurve.
Since $\Delta$ is already made of geodesics, it is unchanged by the straightening. Since the straightening does not increase the number of crossings of the multicurve, the number of intersections of $\gamma$ with $\Delta$, and thus with $X$, is $O(gn)$.
\end{proof}

Therefore, by inserting a straightening of all the strands within $\Sigma \setminus X$ between two straightenings within a disk, we can ensure that the multicurve $\gamma$ is always made of $O(gn)$ geodesic segments, which we can encode using the coordinates of their endpoints.

%Since there are polynomially many steps, the number of those breaking points stays polynomial.
The loop ends when no point of high tortuosity is found.
It remains to find a point inside the hyperbolic polygon which is far away (under the hyperbolic metric) from a collection of polynomially many geodesic arcs.
As we emphasized after the proof of Lemma~\ref{L:tortuosity}, in hyperbolic geometry, the sets of points equidistant to a geodesic are called \textit{hypercircles}, which are \textit{not} geodesics (unlike in the Euclidean setting), but they are still realized by circular arcs in the Poincaré disk model.
Therefore, one can easily compute all these hypercircles, and find a point to puncture in the complement of regions bounded by pairs of circles.

The bottleneck of the algorithm to move curves close to the geodesics is the sequence of straightenings in disks centered at points of high tortuosity: there are $O(n^3 \log^3 g/g^2)$ such straightenings.
Each of them requires finding the correct breaking points among the $O(gn)$ possibilities, after which straightening the tangle also takes $O(gn)$ time.
The resulting algorithm runs in $O(n^4 \log^3 g/g)$ time.
In total, we can put a multicurve in minimal position in $O(n^4 \log^3 g/g+(g+b)^2n^2)$ time.
%For boundaryless surfaces, this is comparable to the algorithm of Despr\'e and Lazarus~\cite{dl-cginc-2017} handling a single closed curve in time $O(n^4)$, and for surfaces with boundary, only the $O((g+b)^2n^2)$ term is needed: this is significantly faster.
%Our focus in this article lies in showing the existence of a polynomial-time algorithm---we have made no effort to optimize its complexity.

\paragraph{Handling the torus.}
While our techniques do not apply to multicurves on a torus without boundary, this case can be handled manually using homology (see for example Stillwell~\cite[Chapter~5]{s-ctcgt-93}).
Indeed, the homotopy class of a closed curve on a torus coincides with its homology, which is a pair of integers $(p,q) \in \mathbb{Z}^2$.
For a given multicurve $\gamma$, one can compute the corresponding collection of pairs of integers $(p_i,q_i)$ in polynomial time by using any classical homology computation algorithm.
Then a minimal position of the multicurve $\gamma$ can be obtained by drawing~$\gamma$ on a Euclidean flat torus represented by a unit-square, where each closed curve is realized by a straight geodesic of slope $q_i/p_i$.
If $\mathrm{gcd}(p_i, q_i) \neq 1$, we take $\mathrm{gcd}(p_i, q_i)$ many copies of the geodesic with a slight offset and connect them so as to realize the homology class $(p,q)$ with exactly $\mathrm{gcd}(p_i, q_i)-1$ self-crossings. %\note{A:this is wrong.}
This multicurve is homotopic to $\gamma$ because it is homological to $\gamma$, and on the torus homology and homotopy coincide.
It is in minimal position because any closed curve of homology $(p,q)$ has at least $\mathrm{gcd}(p, q)-1$ self-crossings, and the crossings between different components realize the \textit{algebraic intersection number} $p_i q_j - p_j q_i$, of which the absolute value is known to lower bound the geometric intersection number.

%\note{A:Update with computational complexity of Section 4.}

\subsection{Electrical reductions}
\label{SS:electrical}

\begin{figure*}[t!]
\centering
\includegraphics[width=0.75\textwidth]{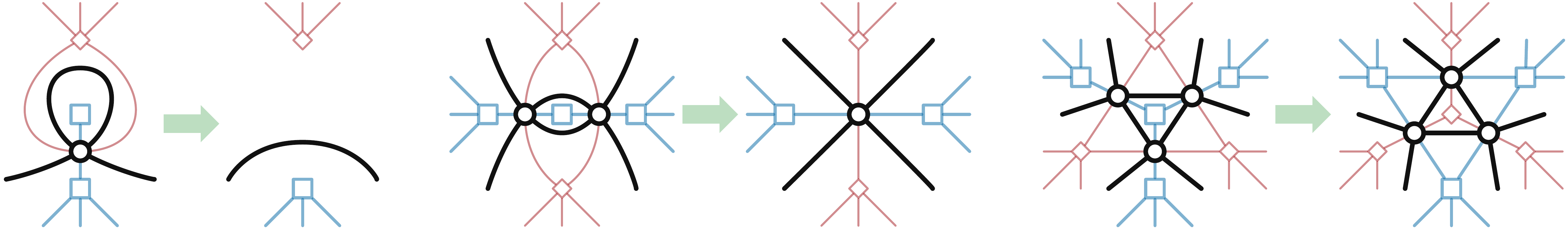}
\caption{Electrical moves $\arc10$, $\arc21$, and $\arc33$.}
\label{F:medial-elec}
\end{figure*}

Next, we consider the implication on reducing surface graphs using electrical transformations and sketch a proof of Theorem~\ref{Th:electrical}.
As mentioned in the introduction, every $k$-terminal graph embedded on a surface $\Sigma$ corresponds to a multicurve lying on a \emph{punctured} surface through the medial construction by adding one puncture on $\Sigma$ for each terminal in the graph.
The set of facial electrical transformations corresponds to a set of local operations on multicurves called the \EMPH{electrical moves}, which bear extreme similarity to the homotopy moves.
Let \EMPH{$X(\gamma)$} denote the minimum number of electrical moves required to tighten $\gamma$ on $\Sigma$, and let \EMPH{$H^\downarrow(\gamma)$} denote the minimum number of monotonic homotopy moves required to tighten $\gamma$ on $\Sigma$, without ever increase the number of vertices.
One can prove that if $H^\downarrow(\gamma) \le f(n)$ for any $n$-vertex multicurve on surface $\Sigma$, then $X(\gamma) \le n\cdot f(n)$ holds, by replacing the first $\arc20$ move with a $\arc21$ move and recurse~\cite[Lemma~7.2]{changthesis}.
In particular, our polynomial upper bound on the number of monotonic homotopy moves directly implies a polynomial bound on the number of electrical moves required to tighten a multicurve.
The original proof can be turned algorithmic by using the polynomial-time algorithm for monotonic homotopy from Theorem~\ref{Th:minimal}; we include a proof here to make the presentation complete.

\begin{lemma}[Chang~{\cite[Lemma~7.2]{changthesis}}]
%Chang~\etal~\cite{homoelectric}
\label{L:homotopy-electric}
Fix an arbitrary surface $\Sigma$.
Any polynomial running time $f(n)$ for tightening any $n$-vertex multicurve $\gamma$ on $\Sigma$ using monotonic homotopy moves can be turned into a polynomial running time $n \cdot f(n)$ for tightening any $n$-vertex multicurve $\gamma$ on $\Sigma$ using electrical moves.
\end{lemma}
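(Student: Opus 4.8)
The plan is to reduce the electrical tightening problem to the monotonic homotopy tightening problem in a black-box fashion, paying a factor of $n$ in the running time. The key structural fact is that the only homotopy move that is \emph{not} already an electrical move is the bigon removal $\arc20$; the monogon move $\arc10$ and the triangle move $\arc33$ are shared, while $\arc20$ must be simulated by the electrical move $\arc21$, which removes only one of the two crossings of the bigon and contracts the resulting monogon into the medial graph (equivalently, performs a series-parallel reduction). So given an $n$-vertex multicurve $\gamma$, I would first run the monotonic homotopy tightening algorithm guaranteed by Theorem~\ref{Th:minimal}; this produces, in time $f(n)$, a sequence of at most $f(n)$ monotonic homotopy moves taking $\gamma$ to a tightening $\gamma_*$, and crucially the vertex count never increases along the way.

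Next I would walk along this recorded sequence of moves and find the \emph{first} $\arc20$ move. Every move before it is $\arc10$ or $\arc33$ — both legal electrical moves — so I replay that prefix verbatim as electrical moves. At the first bigon removal, I instead apply a single $\arc21$ move to the same bigon: this destroys one crossing and splices a vertex into the underlying surface graph, leaving a multicurve $\gamma'$ with exactly one fewer vertex than the configuration just before the original $\arc20$ move. Since $\gamma'$ is still a valid multicurve on the (now slightly modified) surface $\Sigma$ — and the modification to the surface graph does not affect which medial multicurves are reachable, because an electrical $\arc21$ corresponds to an honest series reduction — I recurse: call the same algorithm on $\gamma'$, which has at most $n-1$ vertices. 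Unrolling the recursion, the number of levels is at most $n$ (the vertex count strictly drops at each level), and at each level the work done is the cost of one invocation of the homotopy tightening routine plus replaying a prefix of its output, which is $O(f(n))$. Hence the total running time is $O(n \cdot f(n))$, and since $f$ is polynomial so is $n \cdot f(n)$.

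To be careful about correctness I would check two things. First, that the multicurve $\gamma'$ obtained after the $\arc21$ move is genuinely tightenable to the same thing: because $\arc21$ only performs a reduction that is reversible by a legal electrical move in the other direction, and because tightening via electrical moves is well-defined up to the reduced graph, $\gamma'$ and $\gamma$ have corresponding minimal reductions; this is exactly the content of the medial correspondence cited in the introduction. Second, that monotonicity of the homotopy routine is essential and used: it guarantees the configuration just before each $\arc20$ has at most $n$ vertices, so the recursion parameter is controlled and every recursive call is again on a multicurve of size bounded by the original $n$, keeping each call's cost at $O(f(n))$ rather than blowing up.

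The main obstacle is the bookkeeping at the recursive step: verifying that after replacing $\arc20$ by $\arc21$ the resulting object is still a medial multicurve of a surface graph on the \emph{same} surface $\Sigma$ (so that Theorem~\ref{Th:minimal} applies again without change of ambient space), and that no terminal or puncture is disturbed by the spliced-in vertex. Once that invariant is established, the running-time accounting is the straightforward telescoping argument above, and the lemma follows.
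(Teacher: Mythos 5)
Your approach matches the paper's almost exactly: run the monotonic homotopy algorithm, play moves until a bigon removal, replace that $\arc20$ with a $\arc21$, and recurse on the resulting smaller multicurve; the vertex count strictly drops per level, giving $n\cdot f(n)$ total time. (The paper recurses after the \emph{first vertex-decreasing move} of either type rather than after the first $\arc20$, but this is an immaterial difference: both bound the recursion depth by $n$.)

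Two points deserve a closer look. First, you never state the termination/correctness base case, and it is not cosmetic: when the monotonic homotopy algorithm returns \emph{no} moves, i.e.\ $\gamma$ is already in minimal position under homotopy, you must know that it is also minimal under electrical moves. The paper invokes exactly this fact (Chang, Cossarini and Erickson~\cite[Lemma~3.7]{homoelectric}) as the first line of its proof; without it, you have produced a valid sequence of electrical moves but have no certificate that you have actually \emph{tightened} anything. Second, your remark that the $\arc21$ move is ``reversible by a legal electrical move in the other direction'' is incorrect and should be dropped: the electrical move set consists of reductions only, and the $\arc12$ inverse is not among them. Fortunately the reversibility plays no role in the argument; what you actually want is the invariance of the relevant homotopy/electrical-tightness data under $\arc21$, which is again the content of the cited lemma rather than reversibility. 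Relatedly, the concern about a ``slightly modified surface'' is a red herring: $\arc21$ is a local move on the multicurve inside the fixed $\Sigma$, so the ambient surface does not change and the recursive call to the homotopy tightening routine requires no adjustment.
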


\begin{proof}
%Given a minimum-length sequence of monotonic homotopy moves that tightens $\gamma$.
Let $\gamma$ be an arbitrary multicurve on $\Sigma$ with $n$ vertices.
First, if multicurve $\gamma$ is already tightened under homotopy moves, then it must be tight under electrical moves as well (see Chang, Cossarini, and Erickson~\cite[Lemma~3.7]{homoelectric}) and thus the statement trivially holds.
Otherwise, consider the first homotopy move in the algorithm for tightening $\gamma$ monotonically (from Theorem~\ref{Th:minimal}) that decreases the number of vertices in $\gamma$ (that is, either a $\arc10$ or $\arc20$ move).
Replace the $\arc20$ move with a $\arc21$ if needed, one arrives at a curve $\gamma'$ that has strictly fewer vertices than $\gamma$.
We restart the monotonic homotopy algorithm on $\gamma'$ instead.

The time spent from reducing $\gamma$ to $\gamma'$ is at most $f(n)$.
Now by induction on the number of vertices, the time $g(n)$ for reducing $\gamma$ is
\ifsoda
at most $n \cdot f(n)$, proving the lemma.
\else
\begin{align*}
	g(n)
	&~\le~ g(n-1) + f(n) \\
	&~\le~ (n-1) \cdot f(n-1) + f(n) \\
	&~\le~ n \cdot f(n),
\end{align*}
which proves the lemma.
\fi
\end{proof}

\ifsoda
\paragraph{Terminal-leaf contraction.}
\else
\subsection{Terminal-leaf contraction}
\label{SS:terminal-leaf}

\begin{figure*}[t!]
\centering
\includegraphics[width=0.6\textwidth]{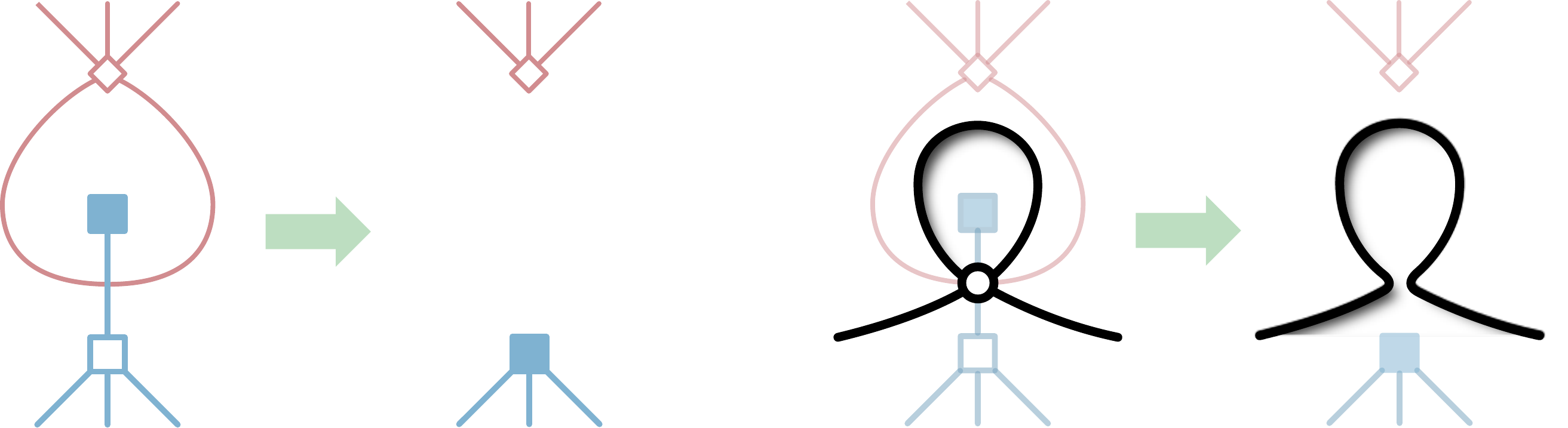}
\caption{A terminal-leaf contraction on graph $G$ and the corresponding operation on medial multicurve $G^\times$; the blue solid squares represent the terminals in $G$, which turn into punctures on the surface.}
%Notice that the precise position of the puncture does not matter as long as it is in the same face of $G^\times$ as the corresponding terminal.
\label{F:TL}
\end{figure*}
\fi

%There are polynomial-time algorithms that reduce any surface graph with $2$-terminals~\cite{t-drpg-89,fp-dtert-93}, $3$-terminals~\cite{g-dtaa-91,p-snrtt-01,gs-tdrpg-11}, and $4$-terminals~\cite{acgp-frpwg-00,dm-ftpdw-15}

Previous works~\cite{fp-dtert-93,acgp-frpwg-00,gs-tdrpg-11,dm-ftpdw-15} on reducing surface graphs with terminals using electrical transformations also rely on an additional move which is not a facial electrical move.
The \EMPH{terminal-leaf contraction}%
\footnote{also known as the \emph{FP-assignment}~\cite{g-dtaa-91}, after Feo and Provan~\cite{fp-dtert-93}} is a leaf-reduction performed on a terminal by contracting its unique incident edge and assigning the merged vertex as a new terminal.
%by assigning the unique neighbor of the leaf as a new terminal, followed by deleting the leaf and its incident edge.
%
\ifsoda
Our techniques can also be applied to allow for this move and yield a polynomial bound on number of moves required to tighten the curves as well as a polynomial-time algorithm.
This is explained in the full version of the article~\cite{fullversion}.
\else
See Figure~\ref{F:TL} for a graph- and curve-view of this operation.
First thing to notice is that a multicurve that is tight under electrical moves might not be tight when terminal-leaf contractions are allowed.
%\note{Fig}
We will argue that after the multicurve is tightened using electrical moves, we can further reduce the curve efficiently using terminal-leaf contractions, until nothing can be done.
This yields the desired polynomial bounds on number of monotonic homotopy moves, as well as a polynomial-time algorithm.
To this end we first introduce the following concepts.

\begin{figure*}[t]
  \centering
  \includegraphics[width=0.3\textwidth]{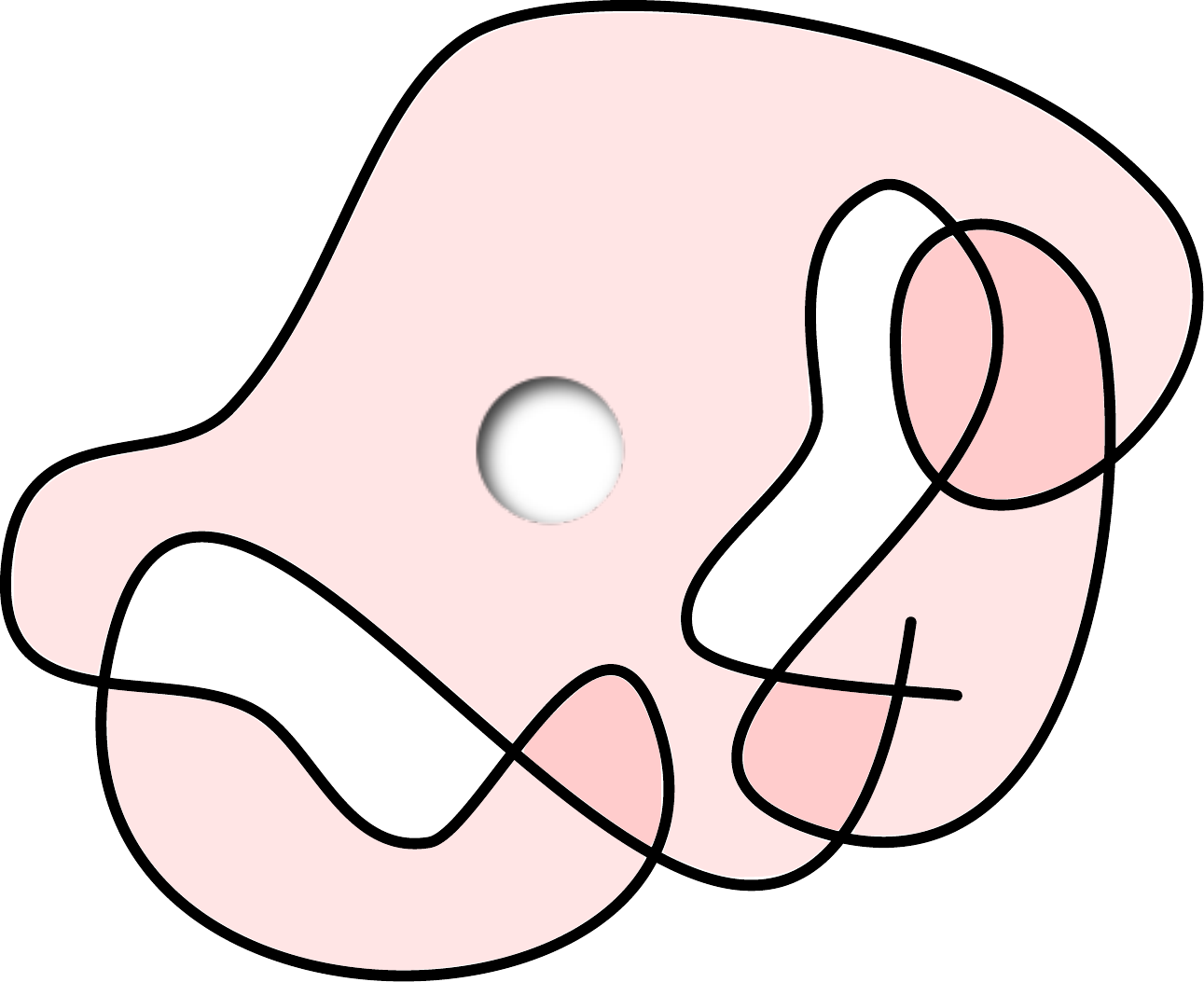}
  \caption{A punctured monogon $\alpha$ and the image of its singular disk.}
  %\caption{A punctured monogon $\alpha$, its preimage, and the tiles coming from cutting along $\alpha^{-1}(\Delta)$.  Notice that there are three extremal tiles in $\alpha$.}
\label{F:punctured-monogon}
\end{figure*}

\paragraph{Punctured monogon.}
A \EMPH{smoothing} of a multicurve $\gamma$ at a vertex~$x$ replaces the intersection of $\gamma$ around $x$ with two disjoint simple paths, so that the result is another multicurve; notice that there are two possible smoothings at each vertex.
More generally, a \emph{smoothing} of a multicurve $\gamma$ is any multicurve obtained by smoothing a subset of its vertices.
A multicurve $\gamma$ contains a (singular) \EMPH{punctured monogon} $\alpha$ if $\gamma$ has a constituent curve $\gamma_i$ with a crossing $x$ such that after smoothing $\gamma_i$ at $x$ in the way that disconnects $\gamma_i$, one of the two curves of the smoothing is freely homotopic to a small circle going exactly once around a single puncture of $\Sigma$.
We call $x$ the \EMPH{tip} of punctured monogon $\alpha$.
This small circle bounds an embedded punctured disk, which turns into a singular punctured disk under the homotopy.
(This is akin to diagrams used in geometric and combinatorial group theory, see e.g.~\cite{ls-cgt-01}.)
%We emphasize that the disk might not be an immersion.
The monogon~$\alpha$ itself designates this singular disk, that is, a map $\alpha\colon D^2 \rightarrow \Sigma$ such that $\alpha(D^2)$ covers the puncture exactly once.
See Figure~\ref{F:punctured-monogon} for an example.
A punctured monogon $\alpha$ is \EMPH{empty} if $\alpha(\bd D_2)$ is simple and the interior of $\alpha(D^2)$ is disjoint from $\gamma$.
A (medial) terminal-leaf contraction always applies on empty punctured monogons.
By definition punctured monogons are exactly those objects that are homotopic to some empty punctured monogon.

As a sanity check to see that punctured monogons are the right objects to work with, observe that any tight multicurve on a sphere with three punctures can be made simple by removing all the punctured monogons; this corresponds exactly to the statement that any 3-terminal plane graph can be reduced completely using electrical transformations along with terminal-leaf contractions~\cite{g-dtaa-91}.

\medskip
First, we show that any punctured monogon can be removed efficiently.

\begin{lemma}\label{L:sing-monogons}
  Let $\gamma$ be a multicurve with $n$ crossings in minimal position on a surface $\Sigma$. If $\gamma$ contains a punctured monogon, then we can compute in $O(n^3)$ time a sequence of $O(n^3)$ $\arc33$ moves transforming $\gamma$ into a multicurve $\gamma'$ containing an empty punctured monogon.
\end{lemma}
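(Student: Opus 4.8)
The plan is to take the punctured monogon $\alpha$ guaranteed to exist, which is some (possibly singular) disk whose boundary comes from smoothing $\gamma_i$ at its tip $x$, and surgically simplify $\alpha$ until it becomes embedded and empty. Since $\gamma$ is already tight, we cannot destroy intersection number; every move must be a $\arc33$ move. First I would set up notation: after smoothing $\gamma_i$ at $x$, we obtain a closed curve $\delta$ (homotopic to a loop around one puncture $p$) together with the rest of $\gamma$; the monogon $\alpha$ designates a singular disk $\alpha\colon D^2 \to \Sigma$ with $\alpha(\partial D^2) = \delta$ covering $p$ once. The goal is to reach a position where $\alpha(\partial D^2)$ is simple and $\alpha(D^2)$ contains nothing of $\gamma$ in its interior. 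I would measure progress by the number of self-crossings of $\delta$ plus the number of crossings of $\gamma$ inside $\alpha(D^2)$ (or a lexicographic combination thereof), and show each surgery phase strictly decreases this quantity while using only $\arc33$ moves.

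The key steps, in order: (1) Make $\delta$ embedded. Since $\gamma$ is tight and a neighborhood of a single puncture on a negatively curved surface supports no essential self-intersections of a loop freely homotopic to the puncture loop, any self-crossing of $\delta$ must cobound an innermost embedded bigon or monogon of $\delta$ by the Hass--Scott result \cite{hs-ics-85} cited in the proof of Theorem~\ref{Th:monotone}; because smoothing at $x$ came from a tight $\gamma$, such a bigon/monogon does not involve the rest of $\gamma$ essentially, and we can remove it with the Steinitz bigon removal algorithm (Lemma~\ref{L:steinitz}) — except every $\arc{2}{0}$ or $\arc{1}{0}$ on $\delta$ must be reinterpreted on $\gamma$ itself, where smoothing at $x$ turns it into a $\arc33$ move involving the strands through $x$, exactly as in the $X(\gamma) \le n\cdot f(n)$ argument of Lemma~\ref{L:homotopy-electric}. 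Each such step costs $O(n)$ $\arc33$ moves and there are $O(n)$ self-crossings of $\delta$, so $O(n^2)$ moves suffice here. (2) Empty the embedded disk $\alpha(D^2)$. Now $\partial \alpha(D^2)$ is a simple closed curve and $\gamma$ meets its interior in an embedded collection of arcs and closed curves; tightness of $\gamma$ forbids closed components and forbids any arc forming a bigon with $\partial\alpha(D^2)$. Innermost such arcs cut off empty trigons with the boundary, and applying Corollary~\ref{C:empty-trigon} (made empty in a neighborhood, $O(m^2)$ moves for $m$ interior vertices) lets us push each interior strand out across the puncture loop with $\arc33$ moves; summing over the at most $O(n)$ strands and $O(n)$ interior vertices gives $O(n^3)$ moves total, dominating the bound.

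The main obstacle I expect is bookkeeping the correspondence between moves on the smoothed object $\delta \sqcup (\gamma\setminus\gamma_i)$ and legal $\arc33$ moves on the actual multicurve $\gamma$: smoothing ``un-crosses'' at $x$, so a move that looks like a bigon removal ($\arc20$) on $\delta$ must be realized as a triangle flip ($\arc33$) on $\gamma$ that drags the two strands through $x$ along, and one must verify that this is always possible without ever increasing $\gamma$'s crossing number — i.e., that the intermediate configurations never force a genuine $\arc21$ or $\arc10$ on $\gamma$. This is the same phenomenon exploited in \cite[Lemma~7.2]{changthesis} and in the $\arc21$-for-$\arc20$ substitution, but here it must be run in reverse and kept monotone; carefully, the tip $x$ acts as a ``virtual'' crossing that absorbs the combinatorial change, so each $\arc20$/$\arc10$ on $\delta$ becomes a $\arc33$ on $\gamma$. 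Once this dictionary is established, the running time follows by noting each of the $O(n^2+n^3)$ moves is computed by a local search in the arrangement, itself $O(n)$ time, but amortized more carefully the whole computation is $O(n^3)$; I would state this bound as the final tally, matching the lemma.
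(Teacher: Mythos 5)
Your two-phase plan — smooth at the tip $x$ to get $\delta=\check\alpha$, make it simple, then empty the embedded punctured disk, using only $\arc33$ moves via the ``$\arc20$ on $\delta$ becomes $\arc33$ on $\gamma$'' dictionary — is the same as the paper's. But there are two concrete gaps.

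In Step (1), the dictionary only makes sense if the bigon or monogon of $\delta$ actually has the smoothed point $x$ on its boundary. Your justification (``such a bigon/monogon does not involve the rest of $\gamma$ essentially'') is not a proof, and the generic Hass--Scott existence result you cite from the proof of Theorem~\ref{Th:monotone} (their Theorem~2.7) does not give this. The paper invokes a sharper statement, Theorem~2.4 of Hass--Scott, which guarantees precisely that, when $\gamma$ itself is already in minimal position, the non-simple smoothed curve $\check\alpha$ has an embedded bigon or monogon \emph{with $x$ on the boundary}. Without this, a bigon of $\delta$ missing $x$ would be a genuine embedded bigon of $\gamma$, contradicting tightness; a monogon of $\delta$ would similarly be ruled out. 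You need the precise statement to both rule out the bad cases and guarantee the good one, and it is not interchangeable with the existence theorem you reference.

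In Step (2), the assertion that tightness of $\gamma$ forbids closed components inside $\alpha(D^2)$ and makes the interior an embedded collection of simple arcs with no bigons does not hold: contractible closed curves are ruled out, but an essential closed curve parallel to the puncture, or a nested punctured monogon, can perfectly well live inside $\alpha(D^2)$ even when $\gamma$ is tight. A strand whose loop already encircles the puncture cannot be swept out by flipping trigons at the tip. The paper's proof resolves this by forgetting the puncture, taking an \emph{innermost} embedded monogon $\alpha'\subseteq\alpha$, re-inserting the puncture to see that $\alpha'$ must itself be a punctured monogon (otherwise tightness is violated), and only then observing that all strands crossing $\alpha'$ are simple so that each one bounds an emptiable trigon with the new tip on the side away from the puncture. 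Your proposal should incorporate this reduction to an innermost monogon; once you do, the rest of your count ($O(n)$ strands, Corollary~\ref{C:empty-trigon} at $O(n^2)$ per strand, $O(n^3)$ total) goes through and matches the paper.
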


\begin{proof}
Let $\alpha$ be a punctured monogon in $\gamma$.
Our plan for the proof is to first turn $\alpha$ into an embedded disk (with a puncture), then empty it.
Smooth the vertex $x$ in $\gamma$ (by removing a small neighborhood of $x$ in $\gamma$ and reconnecting the curves in a non-crossing fashion) so that the punctured monogon $\alpha$ turns into a closed curve, which we denote as \EMPH{$\check\alpha$}.
By definition of punctured monogon, $\check\alpha$ is homotopic to a simple curve winding around the puncture.
A result of Hass and Scott~\cite[Theorem~2.4]{hs-ics-85} shows that if $\check\alpha$ is not simple, there must be an embedded bigon or monogon in $\check\alpha$ that contains $x$ on the boundary (because $\gamma$ itself is in minimal position).
Therefore such a bigon (resp.\ monogon) corresponds to a trigon (resp.\ bigon) in $\gamma$.
By minimality of $\gamma$, the only possibility is that this is a trigon.
Using Lemma~\ref{L:steinitz} and Corollary~\ref{C:empty-trigon} we can empty and flip the trigon in $\gamma$ (and thus the bigon in $\check\alpha$) using at most $O(n^2)$ moves.
Now the complexity of $\check\alpha$ decreases; after at most $n$ steps we turn $\check\alpha$ into a simple closed curve.
We emphasize that as $\gamma$ is already tight, and the complexity of $\gamma$ does not change throughout the process.

At this stage, the singular disk of $\alpha$ is embedded.
If we consider the surface $\Sigma$ without the puncture, $\alpha$ forms an embedded monogon.
Let $\alpha'$ be an innermost embedded monogon inside $\alpha$ (possibly $\alpha=\alpha'$).
Considering $\Sigma$ again with the puncture, since $\gamma$ is in minimal position, $\alpha'$ must be a punctured monogon.
Since it is innermost, all the strands crossing $\alpha'$ are simple.
Any such strand bounds a trigon with the tip of $\alpha'$, which we can remove once again using Corollary~\ref{C:empty-trigon}. This does not increase the number of crossing strands with the $\alpha'$.
Thus, removing all such strands costs $O(n^3)$ moves, after which the punctured monogon $\alpha'$ is empty.
\end{proof}

\def\Route{\operatorname{\mathit{route}}}

\paragraph{Routing set.}
Next we show that when there are no more punctured monogons, the multicurve must be tight under electrical moves and terminal-leaf contractions.
For any multicurve $\gamma$, the \EMPH{routing set}~\cite{homoelectric} of $\gamma$ is the following collection of homotopy classes:
\[
\EMPH{$\Route(\gamma)$} \coloneqq \Set{\Big. \Brack{\check\gamma} \mid \text{$\check\gamma$ is a smoothing of $\gamma$}}.
\]
Each homotopy class in $\Route(\gamma)$ is referred to as a \EMPH{route} of $\gamma$.
A key property is that the routing set is invariant under electrical moves~\cite[Lemma~3.6]{homoelectric}.

\begin{lemma}
\label{L:tightness}
Let $\gamma$ be a connected multicurve in minimal position on a surface $\Sigma$. If $\gamma$ does not contain a punctured monogon, then no sequence of electrical moves and terminal-leaf contractions can transform~$\gamma$ into another multicurve with fewer crossings than $\gamma$.
\end{lemma}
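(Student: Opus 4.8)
The plan is to combine two invariants under the allowed moves---the routing set $\Route(\gamma)$ and the number of crossings---with the key observation that the only way a terminal-leaf contraction can actually help is through an empty punctured monogon, which by the previous lemmas cannot occur if $\gamma$ has no punctured monogon. First I would recall that $\gamma$ is tight under electrical moves alone: since $\gamma$ is in minimal position (hence tight under homotopy moves), the result of Chang, Cossarini, and Erickson~\cite[Lemma~3.7]{homoelectric} already shows that no sequence of electrical moves can reduce the number of crossings. So the entire content of the lemma is about ruling out progress via terminal-leaf contractions. The strategy is a ``first deviation'' argument: suppose for contradiction that some sequence $\sigma$ of electrical moves and terminal-leaf contractions takes $\gamma$ to a multicurve with strictly fewer crossings, and consider the first terminal-leaf contraction in $\sigma$ that is applied when the current multicurve still has the same number of crossings as $\gamma$ (such a prefix exists and is nonempty, since electrical moves alone never decrease the count below that of $\gamma$, and the final multicurve does).

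The core step is then to show that a terminal-leaf contraction applied to a multicurve $\delta$ that is still in minimal position (same crossing number as $\gamma$, and homotopic to $\gamma$, hence also minimal) can only be applied at---or be ``witnessed'' by---an empty punctured monogon of $\delta$. Indeed, a medial terminal-leaf contraction removes a degree-one vertex in the underlying graph whose incident edge goes to a terminal; on the curve side this is exactly the configuration of an empty punctured monogon with tip at some crossing $x$, where the smoothed-off loop bounds an embedded punctured disk disjoint from $\delta$. Thus $\delta$ contains an empty punctured monogon, so in particular $\delta$ contains a punctured monogon. But $\delta$ is homotopic to $\gamma$, and ``containing a punctured monogon'' is a property of the homotopy class together with the realization: here I would argue that since $\delta$ is in minimal position, if it contains a punctured monogon then so does $\gamma$. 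This is where I invoke the machinery behind Lemma~\ref{L:sing-monogons}: two minimal-position multicurves in the same homotopy class are related by a sequence of $\arc33$ (triangle) moves within minimal position, and the existence of a (possibly singular) punctured monogon is preserved under such moves, because a punctured monogon is by definition anything homotopic to an empty punctured monogon, and the relevant homotopy class of the smoothing $\check\gamma_i$ around the puncture is unchanged. Hence $\gamma$ itself would contain a punctured monogon, contradicting the hypothesis.

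The main obstacle I anticipate is the bookkeeping in that last transfer step---carefully formalizing ``$\delta$ minimal and $\delta\simeq\gamma$ and $\delta$ has a punctured monogon $\Rightarrow$ $\gamma$ has a punctured monogon.'' The cleanest route is to phrase it directly in terms of homotopy classes: a multicurve $\eta$ in minimal position contains a punctured monogon if and only if some smoothing $\check\eta$ of $\eta$ (disconnecting one constituent curve at a crossing) has a connected component homotopic to a simple loop around a puncture; and whether such a smoothing exists depends only on the homotopy class of $\eta$ once $\eta$ is tight, because by Lemma~\ref{L:sing-monogons} any such punctured monogon, empty or not, can be normalized within minimal position without changing which punctures are enclosed. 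One must be slightly careful that the connectedness hypothesis on $\gamma$ in the lemma statement is used here---to guarantee that the disconnecting smoothing behaves as expected and that routes are tracked correctly---and that the ``fewer crossings'' in the conclusion is measured after the whole sequence, so the first-deviation index is well-defined. Once this transfer lemma is in place, the contradiction is immediate and the proof concludes.
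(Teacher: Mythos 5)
Your proof has a genuine gap, and it is in the step you flag as ``the main obstacle.'' The problem is that electrical moves do \emph{not} preserve the homotopy class of a multicurve. The $\arc21$ move (and, when allowed, its inverse) alters the combinatorial connectivity of the strands: it is precisely a smoothing, not a homotopy. The only invariant they preserve is the routing set $\Route(\cdot)$. Consequently, the multicurve $\delta$ just before the first terminal-leaf contraction is \emph{not} homotopic to $\gamma$ in general, even if it happens to have the same number of crossings, and you cannot conclude that $\delta$ is in minimal position, nor invoke the $\arc33$-equivalence of tight homotopic multicurves to transfer the punctured monogon from $\delta$ to $\gamma$. This is exactly the subtlety the paper emphasizes when it writes ``We emphasize that $\gamma'$ might not be in minimal position.''

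The paper's workaround is to not work with $\gamma'$ (your $\delta$) directly but with a smoothing $\check\gamma'$ of $\gamma'$ chosen so that $\check\gamma'$ \emph{is} homotopic to $\gamma$ and in minimal position. This is where the routing-set invariance and the Neumann--Coto proposition enter: since $\Route(\gamma') = \Route(\gamma)$, some smoothing of $\gamma'$ is homotopic to $\gamma$, and by Neumann--Coto it can be chosen tight. The transfer argument you sketch (tight homotopic multicurves differ only by $\arc33$ moves, which preserve punctured monogons) is then applied to a constituent curve of $\check\gamma'$, not to $\gamma'$. It shows that the tip $x$ must be smoothed in $\check\gamma'$, and connectedness of $\gamma$ is used precisely to rule out one of the two possible smoothings of $x$ (the disconnecting one). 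The conclusion then is not directly a contradiction about punctured monogons but rather that $\gamma$ is homotopic to a strict smoothing of itself, contradicting minimality; and the paper takes a counterexample minimizing the number of terminal-leaf contractions used to close the case $\check\gamma=\gamma$. Your ``first deviation at the same crossing count'' selection also does not obviously pick out a meaningful step, since the crossing count may rise above $|\gamma|$ before the first contraction is applied; the minimality choice on the number of contractions is cleaner and is what the paper's endgame actually needs.
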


\begin{proof}
Assume otherwise that there is a sequence of electrical moves and terminal-leaf contractions transforming $\gamma$ into another multicurve with fewer crossings.  Let the number of terminal-leaf contractions used in this sequence be minimal among all counterexamples.
If no terminal-leaf contraction is used, then the fact that $\gamma$ is tight under electrical moves is a theorem of Chang, Cossarini and Erickson~\cite[Lemma~3.7]{homoelectric}.

Otherwise, denote by $\gamma'$ and $\gamma''$ the two multicurves just before and after the first terminal-leaf contraction is used.
The curve $\gamma'$ contains an empty punctured monogon, whose tip is denoted by $x$.
We emphasize that $\gamma'$ might not be in minimal position.
We first claim that \emph{$\gamma$ is homotopic to a smoothing of $\gamma''$.}
By definition, $\gamma'$ is related to $\gamma$ by electrical moves, thus $\Route(\gamma')=\Route(\gamma)$.
Therefore $\gamma$ is homotopic to a smoothing $\check\gamma'$ of $\gamma'$. A theorem of Neumann-Coto%
\footnote{Neumann-Coto's proposition requires the multicurve to be made of primitive curves.
But in the proof, this is actually only needed to apply a theorem of Hass and Scott on monotonic simplification of curves, which is valid also for non-primitive multicurves, as was proved by de Graaf and Schrijver~\cite{gs-mcmcr-97} (or our Lemma~\ref{L:tighten-strip}).}%
~\cite[Proposition~2.2]{n-csgs-01} shows that a tightening of $\check\gamma'$ can be found among the smoothings of $\check\gamma'$, and thus of $\gamma'$.
Therefore without loss of generality we can take $\check\gamma'$ to be in minimal position.
Assume for contradiction that $x$ has not been smoothed in $\check\gamma'$.
Denote by $\kappa'$ the constituent curve of $\check\gamma'$ containing $x$,
and $\kappa$ the constituent curve of $\gamma$ homotopic to $\kappa'$.
Since $\kappa$ and $\kappa'$ are homotopic and both in minimal position, they can be transformed into each other via only $\arc33$ moves (see for example Hass and Scott~\cite[Theorem~2.1]{hs-scs-94}).
Since $\arc33$ moves preserve punctured monogons and $\gamma$ (and thus $\kappa$) has no punctured monogon, we have that $\kappa'$ contains no punctured monogon, a contradiction.
So the vertex $x$ has been smoothed in $\check\gamma'$.
While there are two possible ways to smooth $x$, one of them disconnects the multicurve which is impossible because $\gamma$ is a connected multicurve in minimal position and also homotopic to $\check\gamma'$.
The other smoothing, if performed on $\gamma'$, gives us $\gamma''$; this implies that $\check\gamma'$ can be viewed as a smoothing of $\gamma''$.
Therefore, $\gamma$ is homotopic to a smoothing of $\gamma''$.

Since $\gamma'$ and $\gamma$ are related via electrical moves, by Chang, Cossarini and Erickson~\cite[Lemma~3.1]{homoelectric}, $\gamma''$ (which is a smoothing of $\gamma'$) is related via electrical moves to a smoothing $\check\gamma$ of $\gamma$.
If $\check\gamma=\gamma$, then doing the moves in reverse, we have removed the need of a terminal-leaf contraction between $\gamma$ and $\gamma''$ and thus this contradicts the minimality of the number of terminal-leaf contractions used in our counter-example. Otherwise, $\gamma$ is homotopic to a smoothing of $\gamma''$, thus is in $\Route(\gamma'')$, and thus in $\Route(\check\gamma)$.
Thus $\gamma$ is homotopic to one of its strict smoothings, which contradicts that $\gamma$ is in minimal position.
\end{proof}

Now we are ready to prove the main result that any multicurve can be tightened using electrical moves and terminal-leaf contractions in polynomial time.

\begin{theorem}
\label{Th:homotopy-electric-TL}
Any multicurve $\gamma$ on a surface $\Sigma$ with $n$ vertices can be tightened using electrical moves and terminal-leaf contractions in $((g+b)\cdot n^5)$ time.
%Fix an arbitrary surface $\Sigma$.
%Any polynomial running time $f(n)$ for tightening any $n$-vertex multicurve $\gamma$ on $\Sigma$ using monotonic homotopy moves can be turned into a polynomial running time $n \cdot f(n)+ ((g+b)n^2$ for tightening any $n$-vertex multicurve $\gamma$ on $\Sigma$ using electrical moves and terminal-leaf contractions.
\end{theorem}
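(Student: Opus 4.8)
The plan is to interleave the electrical tightening supplied by Lemma~\ref{L:homotopy-electric} with terminal-leaf contractions extracted from punctured monogons, using Lemma~\ref{L:tightness} as the stopping criterion. The loop invariant will be: the current multicurve is in minimal position on $\Sigma$, equivalently (by Chang, Cossarini and Erickson~\cite[Lemma~3.7]{homoelectric}) it is tight under electrical moves. We first establish this invariant by running the algorithmic version of Theorem~\ref{Th:minimal} and converting it into a sequence of electrical moves via Lemma~\ref{L:homotopy-electric}. Since every terminal of $G$ becomes a puncture, the medial surface always has positive boundary (and if $G$ has no terminal the statement is exactly Lemma~\ref{L:homotopy-electric}), so Lemma~\ref{L:tighten-strip} applies: homotopy tightening costs $O((g+b)n^3)$ moves, hence $O((g+b)n^4)$ electrical moves and time after the factor-$n$ blow-up of Lemma~\ref{L:homotopy-electric}.

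The main loop then proceeds as follows. Given the minimal-position multicurve $\gamma$, test whether $\gamma$ has a punctured monogon: for each self-crossing $x$ of a constituent curve $\gamma_i$, perform the smoothing that splits $\gamma_i$ into two loops and apply a polynomial-time free-homotopy test to check whether one of the two loops is a simple loop encircling a single puncture of $\Sigma$ exactly once. If no such $x$ exists, then by Lemma~\ref{L:tightness} applied to each connected component, $\gamma$ is tight under electrical moves and terminal-leaf contractions and we stop. Otherwise, feed the punctured monogon to Lemma~\ref{L:sing-monogons} to obtain, via $O(n^3)$ $\arc33$ moves (all of which are legal electrical moves), a multicurve $\gamma'$ with the same number of crossings containing an empty punctured monogon; apply the terminal-leaf contraction at its tip, which deletes exactly that crossing, producing $\gamma''$ with strictly fewer crossings than $\gamma$. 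Re-establish the invariant by re-tightening $\gamma''$ as in the first step, and iterate.

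For termination and the running-time estimate: each iteration strictly decreases the crossing number, so there are at most $n$ iterations; each iteration costs one electrical re-tightening in $O((g+b)n^4)$, one invocation of Lemma~\ref{L:sing-monogons} in $O(n^3)$, and one round of punctured-monogon detection, which is a polynomial number of standard homotopy tests and is dominated by the re-tightening. The total is therefore $O((g+b)n^5)$, as claimed. The correctness hinges on the fact that at the top of each iteration $\gamma$ is genuinely in minimal position (guaranteed by the re-tightening step), so that Lemma~\ref{L:sing-monogons} and Lemma~\ref{L:tightness} both apply.

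The step I expect to be the main obstacle is the bookkeeping around the hypotheses of Lemma~\ref{L:tightness}, specifically connectivity: one must argue that the multicurve stays connected throughout, which holds because the terminal-leaf contraction performs precisely the non-disconnecting smoothing at the tip (the other smoothing would split off a puncture loop, which is impossible for a connected minimal multicurve), and because re-tightening preserves the homotopy type and hence the number of components. A secondary subtlety is making the punctured-monogon test efficient, which amounts to recognizing, in polynomial time, when a smoothing of a component is freely homotopic to a fixed puncture-encircling loop; this reduces to standard algorithmic topology on surfaces and does not affect the overall bound.
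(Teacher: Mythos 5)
Your proposal matches the paper's proof essentially step for step: tighten with electrical moves via Lemma~\ref{L:homotopy-electric} to reach minimal position, detect a punctured monogon with a homotopy test, empty it via Lemma~\ref{L:sing-monogons}, perform a terminal-leaf contraction, and iterate, using Lemma~\ref{L:tightness} as the halting criterion and bounding the loop by $n$ iterations at $O((g+b)n^4)$ each. The only point where you diverge is in handling connectivity: the paper simply restricts attention to a single connected component (``WLOG $\gamma'$ is connected, as disconnected components cannot interact''), whereas you try to argue that connectivity is preserved across iterations. That stronger claim is shakier than you present it --- ``re-tightening preserves the homotopy type'' is false for electrical moves (they preserve only the routing set), and the characterization of the terminal-leaf contraction as ``the non-disconnecting smoothing'' is not quite how the operation acts --- but neither inaccuracy affects the correctness of the algorithm, since processing components independently (as the paper does) renders the whole concern moot.
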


\begin{proof}
Let $\gamma$ be an arbitrary multicurve on $\Sigma$ with $n$ vertices.  Our recursive algorithm loops as follows.

By Lemma~\ref{L:tighten-strip} and Lemma~\ref{L:homotopy-electric}, one can tighten $\gamma$ into another multicurve $\gamma'$ using electrical moves in $O((g+b) \cdot n^4)$ time.
Now $\gamma'$ is in minimal position.
Without loss of generality, $\gamma'$ is connected, as disconnected components cannot interact with each other under electrical moves and terminal-leaf contractions.
For each vertex $x$ of $\gamma'$ we test whether $x$ is the tip of a punctured monogon using a linear-time homotopy test~\cite{lazarusrivaud,dehn}. If $\gamma'$ contains no punctured monogon, by Lemma~\ref{L:tightness}, $\gamma'$ is tight under electrical moves and terminal-leaf contractions.
Otherwise, if $\gamma'$ contains a punctured monogon, by Lemma~\ref{L:sing-monogons}, we can compute in $O(n^3)$ time a sequence of $O(n^3)$ $\arc33$ moves turning $\gamma'$ into a multicurve containing an empty punctured monogon.
Applying a terminal-leaf contraction on this empty punctured monogon, we obtain a new multicurve $\gamma''$ with $n-1$ vertices, and we go back to the start of the loop.

Since the number of vertices strictly decreases, the bottleneck of the algorithm is to tighten $\gamma$ using electrical moves and applying Lemma~\ref{L:sing-monogons}.
The total running time is $O((g+b) \cdot n^5)$.
\end{proof}

% \begin{TODO}
% Add the observation that in thrice-punctured plane, if all punctured monogons are removed then the curve is simple.
% \end{TODO}

\fi

%%%%%%%%%%%%%%%%%%%%%%%%%%%%%%%%%%%%%%%%%%%

%\newpage
%\bigskip
\paragraph{Acknowledgments.}  The authors would like thank Jeff Erickson and Francis Lazarus for helpful discussions, and Gelasio Salazar for his comments on earlier version of the paper.

\bibliographystyle{newuser}
\bibliography{tighter,bib/topology,bib/jeffe,bib/thesis}

%%%%%%%%%%%%%%%%%%%%%%%%%%%%%%%%%%%%%%%%%%%%%%%%%%
%\newpage
%\appendix

\end{document}